\newif\ifproofended
\let\siamendproof=\endproof
\def\qedhere{\qquad\siamendproof\gdef\doendproof{\empty}}
\renewenvironment{proof}[1][\proofname]{\par
    \normalfont
    \topsep6\p@\@plus6\p@ \trivlist
    \item[\hskip\labelsep\itshape
    #1\@addpunct{.}]\ignorespaces
    \gdef\doendproof{\siamendproof}
}{%
    \doendproof\endtrivlist
}
\newtheorem{remarkX}{\itshape Remark}
\newtheorem{exampleX}{\itshape Example}
\newenvironment{remark}[1][]{%
        \begin{remarkX}%
        \def\foo{#1}\ifx\foo\empty\else(#1)\fi\normalfont
    }{
        \end{remarkX}
    }
\newenvironment{example}[1][]{\begin{exampleX}\normalfont}{\end{exampleX}}
\renewcommand{\grad}{\nabla}
\definecolor{hrefcolor}{rgb}{0.0,0.5,0.8}
\definecolor{hlgreen}{rgb}{0,0.7,0}
\newenvironment{enumroman}
    {
     
     \begin{enumerate}
        \setlength{\leftmargin}{3.0em}
        \setlength{\labelwidth}{2.5em}
        \setlength{\labelsep}{0.5em}
    }
    {\end{enumerate}}
\newcounter{remcount}
\newcommand{\term}{\emph}
\newcommand{\field}[1]{\mathbb{#1}}
\newcommand{\Z}{\mathbb{Z}}
\newcommand{\R}{\field{R}}
\newcommand{\B}{B}
\newcommand{\norm}[1]{\|#1\|}
\newcommand{\abs}[1]{|#1|}
\newcommand{\inv}[1]{#1^{-1}}
\newcommand{\freevar}{\,\boldsymbol\cdot\,}
\newcommand{\Union}\bigcup
\newcommand{\Isect}\bigcap
\newcommand{\union}\cup
\newcommand{\isect}\cap
\newcommand{\bigunion}\bigcup
\newcommand{\bigisect}\bigcap
\newcommand{\defeq}{:=}
\newcommand{\downto}{\searrow}
\newcommand{\upto}{\nearrow}
\newcommand{\subdiff}{\partial}
\DeclareMathOperator*{\lip}{lip}
\DeclareMathOperator{\ri}{ri}
\DeclareMathOperator{\closure}{cl}
\DeclareMathOperator{\sign}{sgn}
\def \uminus@sym{\setbox0=\hbox{$\cup$}\rlap{\hbox 
        to\wd0{\hss\raise0.5ex\hbox{$\scriptscriptstyle{-}$}\hss}}\box0}
    \def \uminus    {\mathrel{\uminus@sym}}
\newcommand{\mathvar}[1]{\textup{#1}}
\def\Xint#1{\mathchoice
{\XXint\displaystyle\textstyle{#1}}%
{\XXint\textstyle\scriptstyle{#1}}%
{\XXint\scriptstyle\scriptscriptstyle{#1}}%
{\XXint\scriptscriptstyle\scriptscriptstyle{#1}}%
\!\int}
\def\XXint#1#2#3{{\setbox0=\hbox{$#1{#2#3}{\int}$ }
\vcenter{\hbox{$#2#3$ }}\kern-.6\wd0}}
\def\dashint{\Xint-}
\renewcommand{\tilde}{\widetilde}
\renewcommand{\ae}{a.e.~}
\newcommand{\iprod}[2]{\langle #1,#2\rangle}
\newcommand{\adaptiprod}[2]{\left\langle #1,#2\right\rangle}
\newcommand{\BVspace}{\mathvar{BV}}
\newcommand{\Meas}{\mathcal{M}}
\renewcommand{\H}{\mathcal{H}}
\renewcommand{\L}{\mathcal{L}}
\newcommand{\restrict}{\llcorner}
\newcommand{\BD}{\partial}
\renewcommand{\d}{\,d} 
\newcommand{\TGV}{\mathvar{TGV}}
\newcommand{\TV}{\mathvar{TV}}
\DeclareMathOperator*{\esssup}{ess\,sup}
\DeclareMathOperator{\Per}{Per}
\DeclareMathOperator{\divergence}{div}
\def \weaktostar@sym{\setbox0=\hbox{$\rightharpoonup$}\rlap{\hbox 
        to\wd0{\hss\raise1ex\hbox{$\scriptscriptstyle{*\,}$}\hss}}\box0}
    \def \weaktostar    {\mathrel{\weaktostar@sym}}
\newcommand{\TODO}[1]{{\sethlcolor{magenta}\color{white}\bfseries\em\hl{#1}}}
\newcommand{\Uspace}{L^1(\Omega)}
\newcommand{\BVUspace}{\BVspace(\Omega)}
\newcommand{\EUspace}{\Meas(\Omega; \R^m)}
\newcommand{\jacobian}{\mathcal{J}}
\newcommand{\jacobianf}[3]{\mathcal{J}_{#1} #2(#3)}
\DeclareMathOperator{\DIFF}{d}
\newcommand{\DIFFSS}{\grad}
\renewcommand{\norm}[1]{\|#1\|}
\newcommand{\Jfunct}{\mathcal J}
\renewcommand{\hat}[1]{\widehat{#1}}
\renewcommand{\tilde}[1]{\widetilde{#1}}
\newcommand{\Dfull}{D}
\newcommand{\Dabs}{\grad}
\newcommand{\Djump}{D^j}
\newcommand{\Dcantor}{D^c}
\newcommand{\ICTV}{\mathvar{ICTV}}
\newcommand{\allLambda}{\isect}
\newcommand{\huber}[2][\gamma]{\abs{#2}_{#1}}
\def\hubergamma{\eta}
\newcommand{\pf}[1]{#1{}_{\#}}
\def\Xint#1{\mathchoice
{\XXint\displaystyle\textstyle{#1}}%
{\XXint\textstyle\scriptstyle{#1}}%
{\XXint\scriptstyle\scriptscriptstyle{#1}}%
{\XXint\scriptscriptstyle\scriptscriptstyle{#1}}%
\!\int}
\def\XXint#1#2#3{{\setbox0=\hbox{$#1{#2#3}{\int}$ }
\vcenter{\hbox{$#2#3$ }}\kern-.6\wd0}}
\def\dashint{\Xint-}
\newcommand{\bitransfull}[2]{T_{#1,#2}}
\newcommand{\bitrans}[2]{G_{#1,#2}}
\newcommand{\bitransjac}[2]{\bar J_{#1,#2}}
\newcommand{\lipjac}[1]{A_{#1}}
\newcommand{\didtrans}[1]{D_{#1}}
\newcommand{\fst}[1]{\overline#1}
\newcommand{\snd}[1]{\underline#1}
\newcommand{\gammaone}{{\fst\gamma}}
\newcommand{\gammatwo}{{\snd\gamma}}
\newcommand{\uone}{{\overline u}}
\newcommand{\utwo}{{\underline u}}
\newcommand{\LipClass}{\mathcal{F}}
\newcommand{\gammax}[3]{\gamma_{#1,#3}}
\newcommand{\loc}{\mathrm{loc}}
\newcommand{\linear}{\mathcal{L}}
\newcommand{\RCa}{R^a}
\newcommand{\RCs}{R^s}
\newcommand{\basehspace}{W_0^{1,\infty}(z_\Gamma^\perp \isect \B(0, 1))}
\newcommand{\baseh}{\bar h}
\newcommand{\TDIFF}{\mathcal{D}}
\newcommand{\CURVP}{\mathcal{C}}
\newcommand{\curvf}{c}
\def\pgp{P_{z_\Gamma}^\perp}
\begin{document}

\title{The jump set under geometric regularisation.\\
    Part 1: Basic technique and first-order denoising
    }
\slugger{sima}{xxxx}{xx}{x}{x--x}

\pagestyle{myheadings}
\thispagestyle{plain}
\markboth{T. VALKONEN}{THE JUMP SET UNDER GEOMETRIC REGULARISATION, P1}

\author{Tuomo Valkonen\thanks{
    Department of Applied Mathematics and Theoretical Physics,
    University of Cambridge, UK.
    The most significant part of this work was done while the author was a Prometeo Fellow at the Research Center on Mathematical Modeling (Modemat), Escuela Politécnica Nacional de Quito, Ecuador.
    E-mail: \texttt{tuomo.valkonen@iki.fi}.
    }
    }

\maketitle

\begin{abstract}
Let $u \in \BVspace(\Omega)$ solve the total variation denoising 
problem with $L^2$-squared fidelity and data $f$. Caselles et al. 
[Multiscale Model.~Simul.~6 (2008), 879--894] 
have shown the containment $\H^{m-1}(J_u \setminus J_f)=0$
of the jump set $J_u$ of $u$ in that of $f$. Their proof unfortunately
depends heavily  on the co-area formula, as do many results in this area,
and as such is not directly extensible to higher-order, curvature-based,
and other advanced geometric regularisers, such as total generalised 
variation ($\TGV$) and Euler's elastica. These have received 
increased attention in recent times due to their better practical 
regularisation properties compared to conventional total variation 
or wavelets.
We prove analogous jump set containment properties for a general 
class of regularisers. We do this with novel Lipschitz 
transformation techniques, and do not require the co-area formula.
In the present Part 1 we demonstrate the general technique on
first-order regularisers, while in Part 2 we will extend it to 
higher-order regularisers. In particular, we concentrate in this
part on $\TV$ and, as a novelty, Huber-regularised $\TV$. 
We also demonstrate that the technique would apply to non-convex
$\TV$ models as well as the Perona-Malik anisotropic diffusion,
if these approaches were well-posed to begin with.
\end{abstract}

\begin{AMS}
    26B30,  
    49Q20,  
    65J20.  
\end{AMS}
    
\begin{keywords} 
    total variation,
    jump set,
    regularisation,
    Lipschitz,
    Huber,
    Perona-Malik.
\end{keywords}

\section{Introduction}

We study the structure of the approximate 
jump set $J_u$ of solutions $u \in \BVUspace$ to regularisation 
problems
\begin{equation}
    \label{eq:prob}
    \tag{P}
    \min_{u \in \Uspace} \int_\Omega \phi(f(x)-u(x)) \d x + R(u).
\end{equation}
We recall that $J_u$ is the $\H^{m-1}$-rectifiable set on which
$u$ has two distinct one-sided Lebesgue limits.
We consider domains $\Omega \subset \R^m$ for $m \ge 2$,
and assume that $\phi: [0, \infty) \to [0, \infty]$ is a convex,
lower semi-continuous, $p$-increasing fidelity function, and $R$ 
a regularisation functional, which generalises 
total variation (TV) in a suitable sense. 
The technical conditions that we set on $R$ are to ensure that
solutions satisfy $u \in \BVUspace$ and that $R$ behaves almost
like $\TV$ under small Lipschitz transformations. We state
these conditions in detail in \S\ref{sec:problem}. 
Briefly, we require the $\BVspace$-coercivity condition
\[
    \norm{Du}_{2,\EUspace} \le C\bigl(1+\norm{u}_{\Uspace}+R(u)\bigr),
    \quad (u \in \Uspace),
\]
and a \term{double-Lipschitz comparability condition} of the form
\begin{equation}
    \label{eq:double-lip}
    R(\pf\gammaone u) + R(\pf\gammatwo u) - 2 R(u) \le 
    C \bitransfull{\gammaone}{\gammatwo} \abs{D u}(\closure U).
\end{equation}
Here $\gammaone$ and $\gammatwo$ are Lipschitz transformations
on an open set $U$. The suitably defined distance
$\bitransfull{\gammaone}{\gammatwo}$ between the transformations turns out
to be $O(\rho^2)$ for specially constructed shift transformations,
dependent on a parameter $\rho$.
As we will see in \S\ref{sec:reg}, this class of regularisation 
functionals includes, in particular,  total variation ($\TV$) and 
Huber-regularised total variation. In Part 2 \cite{tuomov-jumpset2} of this 
pair of papers, we will look at the extension of this condition and technique
to cover higher-order regularisers, in particular total generalised 
variation (TGV) \cite{bredies2009tgv}, and infimal convolution TV (ICTV) 
\cite{chambolle97image}.
The analysis of these functionals, in particular that of $\TGV$, 
is significantly more involved than that of first-order regularisers, 
and enough to fill an additional manuscript or three. 

Assuming further that both the solution $u$ and the given data $f$
are in $\BVUspace \isect L^\infty_\loc(\Omega)$, 
we show for $p$-increasing $\phi$ for $1 < p < \infty$, 
including $L^p$ fidelities $\phi(t)=\abs{t}^p$, that the jump set $J_u$ of $u$ 
is contained, modulo a $\H^{m-1}$ null set, in the jump set $J_f$ of $f$.
That is, $\H^{m-1}(J_u \setminus J_f)=0$. The boundedness condition
of course holds for $\TV$ and Huber-$\TV$ by standard barrier arguments,
but has to be shown or imposed separately in the general case.
For $\phi(x)=x$, i.e., the $L^1$ fidelity, the same conclusion does not
necessarily hold, as is known in the case of total variation 
regularisation \cite{duval2009tvl1}. Under assumptions of 
approximate piecewise constancy, we however show 
that $J_u \setminus J_f$ has a $C^{2,\gamma}$ structure with curvature
$1/\alpha$, for $\alpha$ the (asymptotic) regularisation parameter.
We state all of these results in detail in \S\ref{sec:problem}.
The proofs are split between multiple sections. 
We construct the specific Lipschitz transformations in
\S\ref{sec:lipschitz}. The main part of the general
proof, studying the effect of the transformations on the
fidelity, can be found in \S\ref{sec:main} for $p>1$, and
in \S\ref{sec:main-l1} for $p=1$.


The class of problems \eqref{eq:prob} is of importance, in particular, for 
image denoising. From an application point of view, it is desirable to know the
structure of $J_u$ in order to show that the regularisation method is
reliable -- that it does not introduce undesirable artefacts, new edges, and 
correctly restores edges where they are present in the original data $f$. 
Higher-order geometric regularisation functionals, such a total generalised 
variation (TGV) \cite{bredies2009tgv} and infimal convolution TV
\cite{chambolle97image}, are as a matter of fact motivated by 
other artefacts introduced by TV regularisation: the stair-casing effect. 
Total variation exhibits flat areas with sharp transitions, which higher-order 
regularisation tends to avoid. We also note that while 
the $L^2$ fidelity $\phi(t)=t^2$, is often easier from the computational point 
of view, the $L^1$ fidelity can better deal with outliers. The $L^2$
fidelity models Gaussian noise, while the $L^1$ fidelity closer models
impulse noise, and should therefore be preferred from the point of view of
robust statistics. The study of the structure of solutions to the general class
of problems \eqref{eq:prob}, with varying fidelity $\phi$ and regulariser $R$,
is therefore of interest, as the analytical knowledge of properties of the 
solutions can provide vital insight and help in choosing the most 
suitable regularisation model for a given problem.

Starting with \cite{lysaker2003noise}, and besides $\TGV$ and $\ICTV$, 
various other higher-order regularisation schemes have been proposed 
in the recent years \cite{burger2012regularized,papafitsoros2012combined,
chan2000high,DFLM2009,didas2009properties}. Curvature based
regularisers such as Euler's elastica \cite{chan2002euler,shen2003euler}
and  \cite{bertozzi2004low} have also recently received attention 
for the better modelling of curvature in images.
Further, non-convex total variation schemes are being studied for
their better modelling of real image gradient distributions
\cite{huang1999statistics,HiWu13_siims,HiWu14_coap,ochsiterated,tuomov-tvq}. 
In the other direction, ``lower-order schemes'' such as
Meyer's G-norm \cite{meyer2002oscillating,vese2003modelingtextures}
and $\TV$ with Kantorovich-Rubinstein discrepancy \cite{tuomov-krtv}
have recently been proposed for the improved modelling of 
texture in images.
Very little is known analytically about the solution of most of 
these models. We concentrate here on $\TV$ and, as a novelty,
Huber-regularised and other $\TV$ variants with convex energies.
We introduce these rigorously in \S\ref{sec:reg}. 
We also demonstrate that our technique would apply to non-convex
$\TV$ schemes, as well as the Perona-Malik anisotropic diffusion
\cite{perona1990scalespace,weickert1998anisotropic}, if these were
well-posed to begin with \cite{tuomov-tvq,guidotti2014anisotropic}.
In Part 2 we will discuss $\ICTV$ and $\TGV^2$. We moreover hope that our 
techniques will be useful and provide an impetus for the analytical 
study of other advanced regularisers as well.

In case of $\TGV^2$ regularisation, we know a little about solutions 
to \eqref{eq:prob} on one-dimensional domains $\Omega=(a,b)$ 
\cite{l1tgv,papafitsoros2013study}. In particular, we showed
in \cite{l1tgv} the jump set containment for $\phi(t)=t$,
the proof easily extensible to $\phi(t)=t^2$.
For first-order total variation regularisation,
i.e.~$R(u)=\alpha\TV(u)=\alpha\norm{Du}_{\EUspace}$, the literature is 
more plentiful on analytical results. With the squared $L^2$ fidelity, 
the problem \eqref{eq:prob} is also known as the 
Rudin-Osher-Fatemi \cite{Rud1992} problem (ROF), and written
\begin{equation}
    \label{eq:rof}
    \min_{u \in \BVUspace} \frac{1}{2}\int_\Omega \abs{f(x)-u(x)}^2 \d x + \alpha \TV(u).
\end{equation}
The first structural results can be found in \cite{ring2000structural}, 
where the stair-casing property is studied. Regarding the jump set,
it is shown in \cite{caselles2008discontinuity} that 
$\H^{m-1}(J_u \setminus J_f)=0$.
The proofs of this and many other properties of total variation
regularisation heavily depend on the co-area formula for functions
of bounded variation, namely
\begin{equation}
    \label{eq:tv-coarea}
    \TV(u) = \int_{-\infty}^\infty \Per(\{u>t\}; \Omega) \d t,
\end{equation}
where $\Per(E; \Omega) \defeq \norm{D\chi_E}_{2,\Meas(\Omega;\R^m)}$
denotes the perimeter of the set $E$ within $\Omega$. 
Thanks to the co-area formula, the problem \eqref{eq:rof} can be 
shown to be equivalent to the family of minimal surface problems
\begin{equation}
    \label{eq:rof-levelset}
    \min_{E \subset \Omega}
        \int_E (t-f(x)) \d x
        +
        \alpha \Per(E; \Omega),
    \quad (\text{\ae} t \in \R).
\end{equation}
The level sets of $u$ can be found as solutions of \eqref{eq:rof-levelset}
for varying $t$.
This formulation and the regularity of the level sets is studied in
\cite{alter2005characterization,chambolle2004meanalgorithm,allard2008total}.

For TV regularisation with the $L^1$ fidelity $\phi(x)=x$, i.e., the problem
\begin{equation}
    \label{eq:l1-tv}
    \min_{u \in \Uspace} \int_\Omega \abs{f(x)-u(x)} \d x + \alpha\TV(u),
\end{equation}
various basic properties were first studied by \cite{chan2005aspects}.
These include thresholds for the regularisation parameter $\alpha > 0$,
under which the optimal solution $u$ equals $f$. Similar thresholds 
were also derived in \cite{l1tgv} for $\TGV^2$ regularisation in 
the case $m=1$. Some of these results readily generalise to $m > 1$.
In this context of parameter thresholds, we also mention \cite{Nik2002}.
In \cite{allard2008total} it is shown that
the level sets $E_t=\{u \ge t\}$ of solutions $u$ to \eqref{eq:l1-tv},
are solutions to the minimal surface problems
\begin{equation}
    \label{eq:levelset-split}
    \min_{E \subset \Omega} \int_E \sign(t-f(x))\d x + \alpha \Per(E; \Omega),
    \quad
    (\text{\ae} t \in \R).
\end{equation}
Some further properties of the level sets are studied
in \cite{duval2009tvl1}. We know from there that the essential
inclusion $\H^{m-1}(J_u \setminus J_f)=0$ does not generally hold,
but the remainder has curvature $\inv\alpha$ for $\alpha$ the
regularisation parameter.
In a similar fashion, the capability of \eqref{eq:l1-tv} to 
separate geometric objects according to their scales is studied
in \cite{Yin2007tvl1}. Finally, an interesting direction is taken
in \cite{benning2012ground} by studying singular vectors and 
ground states of regularisation functionals, eventually hoping
to obtain something resembling an eigendecomposition
for solutions to \eqref{eq:prob}. The singular vectors aside, 
all of these techniques heavily depend on the co-area formula.

As mentioned, in this paper, we introduce a novel technique
for the study of the jump set $J_u$ for rather general regularisers
$R$, which is not based on the co-area formula. It is, instead,
based on Lipschitz pushforwards. We push forward a purported solution
by two different Lipschitz transformations and show that this provides
a contradiction to $u$ solving \eqref{eq:prob} 
if $\H^{m-1}(J_u \setminus J_f)>0$, ($p>1$). 
Before starting to develop this technique in detail in 
\S\ref{sec:problem}, we now introduce some general notation,
concepts and tools in the following \S\ref{sec:prelim}.

\section{Notation and useful facts}
\label{sec:prelim}

We begin by introducing the tools necessary for our work.
First we introduce basic notation for sets, mappings,
and measures. We then move on to Lipschitz mappings 
and graphs. Our notation and definition of the latter will
be used extensively throughout the paper, as we perform
operations on the jump set $J_u$, which is $\H^{m-1}$-rectifiable.
Having defined our notation for Lipschitz graphs, we
introduce functions of bounded variation.

\subsection{Basic notations}

We denote by $\{e_1,\ldots,e_m\}$ the standard basis of $\R^m$.
The boundary of a set $A$ we denote by $\BD A$, and the 
closure by $\closure A$.  The $\{0,1\}$-valued indicator function
we write as $\chi_A$.
We denote the open ball of radius $\rho$ centred at $x \in \R^m$ 
by $\B(x, \rho)$. We denote by $\omega_m$ the volume of the unit 
ball $\B(0, 1)$ in $\R^m$.

For $z \in \R^m$, we denote by
$z^\perp \defeq \{ x \in \R^m \mid \iprod{z}{x}=0\}$
the hyperplane orthogonal to $z$ , whereas
$P_z$ denotes the projection operator onto the subspace
spanned by $z$, and $P_z^\perp$ the projection onto 
$z^\perp$. If $A \subset z^\perp$, we denote by
$\ri A$ the \emph{relative interior} of $A$ in
$z^\perp$ as a subset of $\R^m$.

Let $\Omega \subset \R^m$ be an open set.
We then denote the space of (signed) 
Radon measures on $\Omega$ by $\Meas(\Omega)$. 
If $V$ is a vector space, then the space of
Radon measures on $\Omega$ with values in $V$ 
is denoted $\Meas(\Omega; V)$.
The $k$-dimensional Hausdorff measure,
on any given ambient space $\R^m$, ($k \le m$), 
is denoted by $\H^k$,
while $\L^m$ denotes the Lebesgue measure on $\R^m$.

The total variation (Radon) norm of a measure $\mu$ is denoted 
$\norm{\mu}_{\Meas(\R^m)}$. For vector-valued measures
$\mu=(\mu_1,\ldots,\mu^k) \in \Meas(\Omega; \R^k)$, we use the notation
\begin{equation}
    \label{eq:radon-q}
    \norm{\mu}_{q, \Meas(\Omega; \R^k)}
    \defeq
    \sup\left\{ \int_\Omega \sum_{i=1}^k \varphi_i(x) \d \mu_i(x)
        \middle|
        \begin{array}{l}
        \varphi \in C_0^\infty(\Omega; \R^k),\\
        \norm{\varphi(x)}_p \le 1 \text{ for } x \in \Omega
        \end{array}
    \right\}
\end{equation}
to indicate that the finite-dimensional base norm is the $p$-norm where $1/p+1/q=1$.
When the choice of the finite-dimensional norm is inconsequential,
we use the notation $\norm{\mu}_{q, \Meas(\Omega; \R^k)}$.
In this work in practise we restrict ourselves to $q=2$ for measures.
In other words, we consider isotropic total variation type functionals.
We use the same notation for vector fields $w \in L^p(\Omega; \R^k)$,
namely
\[
    \norm{w}_{q, L^p(\Omega; \R^k)} \defeq \left(\int_\Omega \norm{w(x)}_q^p \d x \right)^{1/p}.
\]    

For a measurable set $A$, we  denote by $\mu \restrict A$ the 
restricted measure defined
by $(\mu \restrict A)(B) \defeq \mu(A \isect B)$.
The notation $\mu \ll \nu$ means that $\mu$ is absolutely
continuous with respect to the measure $\nu$, and $\mu \perp \nu$ 
that $\mu$ and $\nu$ are mutually singular. The singular and
absolutely continuous (with respect to the Lebesgue measure)
part of $\mu$ are denoted $\mu^a$ and $\mu^s$, respectively.

We denote the $k$-dimensional upper resp.~lower density
of $\mu$ by
\[
    \Theta^*_k(\mu; x) \defeq \limsup_{\rho \downto 0} \frac{\mu(\B(x, \rho))}{\omega_k \rho^k},
    \quad
    \text{resp.}
    \quad
    \Theta_{*,k}(\mu; x) \defeq \liminf_{\rho \downto 0} \frac{\mu(\B(x, \rho))}{\omega_k \rho^k}.
\]
The common value, if it exists, we denote by $\Theta_k(\mu; x)$. 

Finally, we often denote by $C$, $C'$, $C'''$
arbitrary positive constants,
and use the plus-minus notation 
$a^\pm=b^\pm$ in to mean that both $a^+=b^+$ and $a^-=b^-$ hold.

\subsection{Mappings from a subspace}

We denote by $\linear(V; W)$ the space of linear maps between
the vector spaces $V$ and $W$.
If $L \in \linear(V; \R^k)$, where $V \sim \R^n$, ($n \le k$),
is a finite-dimensional Hilbert space,
then $L^* \in \linear(\R^k; V^*)$ denotes the adjoint,
and the $n$-dimensional Jacobian is defined as
\cite{ambrosio2000fbv}
\[
    \jacobian_{n}[L] \defeq \sqrt{\det (L^* \circ L)}.
\]
With the gradient of a Lipschitz function $f: V \to \R^k$
defined in ``components as columns order'', 
$\grad f(x) \in \linear(\R^k; V)$, we extend this 
notation for brevity as
\[
    \jacobianf{n}{f}{x} \defeq \jacobian_{n}[(\grad f(x))^*].
\]
If $\Omega \subset V$ is a Borel set, 
and $g \in L^1(\Omega)$, the \term{area formula} 
may then be stated 
\begin{equation}
    \label{eq:areaformula}
    \int_{\R^k} \sum_{x \in \Omega \isect \inv f(y)} g(x) \d\H^{n}(y)
    = \int_\Omega g(x) \jacobianf{n}{f}{x} \d \H^n(x).
\end{equation}
That this indeed holds in our setting of finite-dimensional Hilbert
spaces $V \sim \R^n$ follows by a simple argument from the area formula
for $f: \R^n \to \R^k$, stated in, e.g, \cite{ambrosio2000fbv}. 
We only use the cases $V = z^\perp$ for some $z \in \R^m$ ($n=m-1$),
or $V=\R^m$ ($n=m$).

We also denote by
\[
    C^{2,\allLambda}(V) \defeq \Isect_{\lambda \in (0,1)} C^{2,\lambda}(V)
\]
the class of functions that are twice differentiable (as defined above
for tensor fields) with a Hölder continuous second differential for
all exponents $\lambda \in (0,1)$.

The Lipschitz factor of a Lipschitz mapping $f$ we denote by $\lip f$.
We also recall that a Lipschitz transformation $\gamma: U \to V$ with
$U, V \subset \R^m$ has the \term{Lusin $N$-property} if it maps
$\L^m$-negligible sets to $\L^m$-negligible sets.

If $\gamma: \Omega \to \Omega$ is a bijective Lipschitz transformation, 
and $u: \Omega \to \Omega$ a Borel function, we define the 
pushforward $u_\gamma \defeq \pf\gamma u \defeq u \circ \gamma^{-1}$.
Finally, we denote the identity transformation by $\iota(x)=x$.

\subsection{Lipschitz graphs}

A set $\Gamma \subset \R^m$ is called a Lipschitz ($m-1$)-graph
(of Lipschitz factor $L$), if there exist a unit vector $z_\Gamma$,
an open set $V_\Gamma \subset z_\Gamma^\perp$, and a Lipschitz map 
$f_\Gamma: V_\Gamma \to \R$, of Lipschitz factor at most $L$,
such that
\[
    \Gamma = \{ v+f_\Gamma(v)z_\Gamma \mid v \in V_\Gamma \}.
\]
We also define $g_\Gamma: V_\Gamma \to \R^m$ by
\[
    g_\Gamma(v)=v+z_\Gamma f_\Gamma(v).
\]
Then
\[
    \Gamma = g_\Gamma(V_\Gamma).
\]

We denote the open domains ``above'' and ``beneath''
$\Gamma$, respectively, by
\[
    \Gamma^+ \defeq \Gamma+(0,\infty)z_\Gamma,
    \quad
    \text{and}
    \quad
    \Gamma^- \defeq \Gamma+(-\infty, 0)z_\Gamma.
\]
We recall
that by Kirszbraun's theorem, we may extend 
the domain of $f_\Gamma$ and $g_\Gamma$ from $V_\Gamma$
to the whole space $z_\Gamma^\perp$ without
altering the Lipschitz constant. Then
$\Gamma$ splits $\Omega$ into the two open halves
$\Gamma^+ \isect \Omega$ and $\Gamma^- \isect \Omega$.
We often use this fact. 


\subsection{Functions of bounded variation}

We say that a function $u: \Omega \to \R$ on a bounded 
open set $\Omega \subset \R^m$ is of \term{bounded variation}
(see, e.g., \cite{ambrosio2000fbv} for a more thorough introduction),
denoted $u \in \BVspace(\Omega)$, if $u \in L^1(\Omega)$, 
and the distributional gradient $\Dfull u$ is a Radon measure.
Given a sequence $\{u^i\}_{i=1}^\infty \subset \BVspace(\Omega)$, 
weak* convergence is defined as $u^i \to u$ strongly in $L^1(\Omega)$
along with $\Dfull u^i \weaktostar \Dfull u$ weakly* in 
$\Meas(\Omega)$. The sequence converges \emph{strictly} if,
in addition to this, $\abs{Du^i}(\Omega) \to \abs{Du}(\Omega)$.

We denote by $S_u$ the approximate discontinuity set, i.e.,
the complement of the set where the Lebesgue limit $\tilde u$ exists. 
The latter is defined by
\[
    \lim_{\rho \downto 0}  \frac{1}{\rho^m}
        \int_{\B(x, \rho)} \norm{\tilde u(x) - u(y)} \d y = 0.
\]

The distributional gradient can
be decomposed as $\Dfull u = \Dabs u \L^m + \Djump u + \Dcantor u$, where
the density $\Dabs u$ of the \term{absolutely continuous part} of
$\Dfull u$ equals (a.e.) the approximate differential of $u$.
We also define the \term{singular part} as $D^s u=D^j u + D^c u$.
The \term{jump part} $\Djump u$ may be represented as
\begin{equation}
    \notag
    \Djump u = (u^+ - u^-) \otimes \nu_{J_u} \H^{m-1} \restrict J_u,
\end{equation}
where $x$ is in the \term{jump set} $J_u \subset S_u$ of $u$ if for some 
$\nu \defeq \nu_{J_u}(x)$ there exist two \emph{distinct} one-sided
traces $u^+(x)$ and $u^-(x)$, defined as satisfying
\begin{equation}
    \notag
    \lim_{\rho \downto 0}  \frac{1}{\rho^m}
        \int_{\B^\pm(x, \rho, \nu)} \norm{u^\pm(x) - u(y)} \d y= 0,
\end{equation}
where
$\B^\pm(x, \rho, \nu) := \{ y \in \B(x,\rho) \mid \pm \iprod{y-x}{\nu} \ge 0\}$.
It turns out that $J_u$ is countably $\H^{m-1}$-rectifiable and $\nu$ is
(a.e.) the normal to $J_u$. This former means that
there exist Lipschitz $(m-1)$-graphs $\{\Gamma_i\}_{i=1}^\infty$ such
that $\H^{m-1}(J_u \setminus \Union_{i=1}^\infty \Gamma_i)=0$.
Moreover, we have $\H^{m-1}(S_u \setminus J_u)=0$. 
The remaining \term{Cantor part} $\Dcantor u$ vanishes
on any Borel set $\sigma$-finite with respect to $\H^{m-1}$.

We will frequently use the following basic properties of
densities of $Du$; for the proof, see, e.g.,
\cite[Proposition 3.92]{ambrosio2000fbv}.

\begin{proposition}
    \label{prop:bv-density}
    Let $u \in \BVspace(\Omega)$ for an open domain $\Omega \subset \R^m$.
    Define
    \begin{equation}
        \label{eq:tilde-js-u}
        \tilde S_u \defeq \{x \in \Omega \mid \Theta_{*,m}(\abs{Du}; x)=\infty\},
        \quad
        \text{and}
        \quad
        \tilde J_u \defeq \{x \in \Omega \mid \Theta_{*,m-1}(\abs{Du}; x) >0\}.
    \end{equation}
    Then the following decomposition holds.
    \begin{enumroman}
        \item $\Dabs u \L^m = D u \restrict (\Omega \setminus \tilde S_u)$.
        \item $\Djump u = D u \restrict \tilde J_u$, 
            precisely $\tilde J_u \supset J_u$, and $\H^{m-1}(\tilde J_u \setminus J_u)=0$.
        \item $\Dcantor u = Du \restrict (\tilde S_u \setminus \tilde J_u)$.
    \end{enumroman}
\end{proposition}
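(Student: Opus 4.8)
This proposition is, modulo notation, \cite[Proposition~3.92]{ambrosio2000fbv}; I sketch how one proves it from the Besicovitch/Radon--Nikodym density dichotomy together with the facts recalled just above — the decomposition $Du=\Dabs u\,\L^m+\Djump u+\Dcantor u$, the rectifiability (and $\H^{m-1}$-$\sigma$-finiteness, since $\Omega$ is bounded) of $J_u$, and the vanishing of $\Dcantor u$ on Borel sets that are $\sigma$-finite with respect to $\H^{m-1}$. First I would peel off the absolutely continuous part: writing $\abs{Du}=\abs{\Dabs u}\,\L^m+\abs{D^su}$ as the Radon--Nikodym decomposition with respect to $\L^m$, Besicovitch differentiation gives $\Theta_m(\abs{Du};x)=\abs{\Dabs u(x)}<\infty$ for $\L^m$-a.e.\ $x$ and $\Theta_m(\abs{Du};x)=+\infty$ for $\abs{D^su}$-a.e.\ $x$. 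Hence $\L^m(\tilde S_u)=0$ while $\abs{D^su}(\Omega\setminus\tilde S_u)=0$, so restricting $Du$ to $\Omega\setminus\tilde S_u$ leaves $\Dabs u\,\L^m$ untouched and discards the singular part — this is (i) — and $Du\restrict\tilde S_u=\Djump u+\Dcantor u$, reducing matters to separating these two inside $\tilde S_u$. Note also that $\Theta_{*,m-1}(\abs{Du};x)>0$ forces $\abs{Du}(\B(x,\rho))\ge c\rho^{m-1}$ for small $\rho$, hence $\Theta_{*,m}(\abs{Du};x)=\infty$; thus $\tilde J_u\subset\tilde S_u$ and in particular $\L^m(\tilde J_u)=0$.

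Next I would place the jump part. For every $x\in J_u$ the rescalings $u_{x,\rho}(y):=u(x+\rho y)$ converge in $L^1_\loc$, as $\rho\downto0$, to the two-valued function jumping by $u^+(x)-u^-(x)\ne0$ across $\nu_{J_u}(x)^\perp$; lower semicontinuity of the total variation under this blow-up, combined with the scaling $\abs{Du_{x,\rho}}(\B(0,1))=\rho^{1-m}\abs{Du}(\B(x,\rho))$, yields $\Theta_{*,m-1}(\abs{Du};x)\ge\abs{u^+(x)-u^-(x)}>0$. Hence $J_u\subset\tilde J_u$, and since $\abs{\Djump u}=\abs{u^+-u^-}\H^{m-1}\restrict J_u$ is concentrated on $J_u$, also $\abs{\Djump u}(\Omega\setminus\tilde J_u)=0$, i.e.\ $\Djump u\restrict\tilde J_u=\Djump u$.

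For the reverse inclusion I would estimate $\Theta_{*,m-1}(\abs{Du};x)$ through $\abs{Du}=\abs{\Dabs u}\,\L^m+\abs{\Djump u}+\abs{\Dcantor u}$, the workhorse being the elementary comparison $\H^{m-1}\restrict\{x:\Theta^*_{m-1}(\mu;x)\ge t\}\le C(m)\,t^{-1}\mu$, valid for a finite Radon measure $\mu$ and $t>0$. Applied to $\mu=\abs{\Dcantor u}$ it shows that $\{x:\Theta^*_{m-1}(\abs{\Dcantor u};x)>0\}$ is $\H^{m-1}$-$\sigma$-finite, hence — being annihilated by $\Dcantor u$ — $\H^{m-1}$-negligible. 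Applied to $\mu=\abs{\Dabs u}\,\L^m$ after a truncation $\abs{\Dabs u}=g_1+g_2$ ($g_1$ bounded, $\norm{g_2}_{L^1}$ arbitrarily small) it gives $\Theta^*_{m-1}(\abs{\Dabs u}\,\L^m;x)=0$ for $\H^{m-1}$-a.e.\ $x$. Applied with the same truncation to $\mu=\abs{\Djump u}=\abs{u^+-u^-}\H^{m-1}\restrict J_u$, using that $\H^{m-1}\restrict J_u$ has $(m-1)$-density $0$ at $\H^{m-1}$-a.e.\ point off the rectifiable, $\H^{m-1}$-$\sigma$-finite set $J_u$, it gives $\Theta^*_{m-1}(\abs{\Djump u};x)=0$ for $\H^{m-1}$-a.e.\ $x\notin J_u$. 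Summing the three estimates, $\Theta_{*,m-1}(\abs{Du};x)=0$ for $\H^{m-1}$-a.e.\ $x\notin J_u$, i.e.\ $\H^{m-1}(\tilde J_u\setminus J_u)=0$. Consequently $\tilde J_u$ is $\H^{m-1}$-$\sigma$-finite, so $\abs{\Dcantor u}(\tilde J_u)=0$; together with $\L^m(\tilde J_u)=0$ and $\Djump u\restrict\tilde J_u=\Djump u$ this gives $Du\restrict\tilde J_u=0+\Djump u+0$, which is (ii). Finally $\Dcantor u$ charges neither $\Omega\setminus\tilde S_u$ (first step) nor $\tilde J_u$, so it is carried by $\tilde S_u\setminus\tilde J_u$; with $\L^m(\tilde S_u)=0$ and $\abs{\Djump u}(\tilde S_u\setminus\tilde J_u)=0$ this gives $Du\restrict(\tilde S_u\setminus\tilde J_u)=\Dcantor u$, which is (iii).

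The density dichotomy disposes of (i) at once and the blow-up of the second step is standard, so the real work — and the step I expect to be the main obstacle — is the reverse inclusion $\H^{m-1}(\tilde J_u\setminus J_u)=0$: it rests on the density--Hausdorff-measure comparison and on the $\sigma$-finiteness and annihilation behaviour of $\Dcantor u$, and one must be careful that the naive Lebesgue-point facts about $\Dabs u$ are only $\L^m$-a.e.\ and must be promoted to $\H^{m-1}$-a.e.\ statements (via the truncation and the comparison estimate) before they may be combined.
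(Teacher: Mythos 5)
Your proposal is correct and coincides with the paper's treatment: the paper does not prove this proposition but simply cites \cite[Proposition 3.92]{ambrosio2000fbv}, and your sketch faithfully reproduces that standard argument (Besicovitch differentiation for the absolutely continuous part, blow-up plus lower semicontinuity for $J_u\subset\tilde J_u$, and the comparison $\H^{m-1}\restrict\{\Theta^*_{m-1}(\mu;\cdot)\ge t\}\le C(m)t^{-1}\mu$ together with the annihilation property of $\Dcantor u$ for the reverse inclusion). One cosmetic point: the $\H^{m-1}$-$\sigma$-finiteness of $J_u$ does not require $\Omega$ bounded (the proposition is stated for a general open domain); it already follows from $\abs{\Djump u}=\abs{u^+-u^-}\H^{m-1}\restrict J_u$ being a finite measure with strictly positive density on $J_u$.
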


\section{Problem statement}
\label{sec:problem}

We now have most of the tools needed to state 
our main results, particularly the containment of $J_u$ in $J_f$ 
modulo a $\H^{m-1}$-null set for convex $p$-increasing ($p>1$) $\phi$.
We just have to rigorously state our assumptions 
on the regularisation functional $R$ and the fidelity $\phi$.
These ensure firstly the existence of a solution $u \in \BVspace(\Omega)$
to \eqref{eq:prob}. Secondly we want to ensure that $R$ behaves almost
like $\TV$ under averaged Lipschitz transformations, and $\phi$
comparably slower.

\subsection{Admissible regularisation functionals and fidelities}

We begin by stating our assumptions on $R$.

\begin{definition}
    \label{def:admissible}
    We call $R$ an \emph{admissible regularisation functional}
    on $\Uspace$, where the domain $\Omega \subset \R^m$,
    if it is convex, lower semi-continuous with respect to weak*
    convergence in $\BVUspace$, and there exists $C > 0$ such that
    \begin{equation}
	\label{eq:r-tv-bound}
	\norm{Du}_{\EUspace} \le C\bigl(1+\norm{u}_{\Uspace}+R(u)\bigr),
	\quad (u \in \Uspace).
    \end{equation}
    Here and throughout the paper, unless otherwise indicated, the finite-dimensional base norm on $\R^m$ is the Euclidean or 2-norm. This is the one most appropriate to most image processing tasks due to its rotational invariance.
\end{definition}

\begin{subequations}
\label{eq:bitrans-components}
\begin{definition}
    We denote by $\LipClass(\Omega)$ the set of one-to-one Lipschitz
    transformations $\gamma: \Omega \to \Omega$ with
    $\inv \gamma$ also Lipschitz and both satisfying the Lusin $N$-property.
    With $U \subset \Omega$ an open set, we further denote
    \[
        \LipClass(\Omega, U) \defeq \{ \gamma \in \LipClass(\Omega) \mid \gamma(x)=x \text{ for } x \not\in U\}.
    \]
    With $\gammaone, \gammatwo \in \LipClass(\Omega)$, 
    we then define the \term{basic double-Lipschitz comparison constants}
    \begin{equation}
        \label{eq:bitrans}
        \bitrans{\gammaone}{\gammatwo} 
        \defeq
        \sup_{x \in \Omega, v \in \R^m, \norm{v}=1} \norm{\lipjac{\gammaone}(x) v} + \norm{\lipjac{\gammatwo}(x) v} - 2\norm{v}.
    \end{equation}
    and
    \begin{equation}
        \label{eq:bitransjac}
        \bitransjac{\gammaone}{\gammatwo} 
        \defeq
        \sup_{x \in \Omega} \abs{\jacobianf{m}{\gammaone}{x} + \jacobianf{m}{\gammatwo}{x} - 2}.
    \end{equation}
    Here $I$ is the identity mapping on $\R^m$, and
    \begin{equation}
        \label{eq:lipjac}
        \lipjac{\gamma}(x) \defeq \grad \inv \gamma(\gamma(x)) \jacobianf{m}{\gamma}{x}.
    \end{equation}
    We also define the distance-to-identity
    \begin{equation}
        \label{eq:didtarns}
        \didtrans{\gamma} \defeq \sup_{x \in \Omega} \norm{\grad \inv \gamma(\gamma(x))-I},
    \end{equation}
    and finally combine all of these into the overall
    \term{double-Lipschitz comparison constant}
    \begin{equation}
        \label{eq:bitransfull}
        \bitransfull{\gammaone}{\gammatwo}
        \defeq \bitrans{\gammaone}{\gammatwo}
            + \bitransjac{\gammaone}{\gammatwo}
            + \didtrans{\gammaone}^2
            + \didtrans{\gammatwo}^2.
    \end{equation}
\end{definition}
\end{subequations}

\begin{remark}
    With the help of $\bitransjac{\gammaone}{\gammatwo}$ and a bound on the
    Lipschitz factors, we could generally replace $\bitrans{\gammaone}{\gammatwo}$ by
    \[
        \tilde G_{\gammaone,\gammatwo}
        \defeq
        \sup_{x \in \Omega, \norm{v}=1} \norm{\grad \inv\gammaone(\gamma(x)) v} + \norm{\grad\inv\gammatwo(\gamma(x)) v} - 2\norm{v}.
    \]
    It however turns out that for our transformations of interest, $\lipjac{\gamma}$ is
    easier work with than $\grad \inv\gamma \circ \gamma$ directly. 
    With the help of a bound on the Lipschitz factor, we could also replace $\didtrans{\gamma}$ 
    by $\bitrans{\gamma}{\iota}$, and then $\bitransfull{\gammaone}{\gammatwo}$ by
    \[
        \tilde T_{\gammaone,\gammatwo}
        \defeq
          \overline T_{\gammaone,\gammatwo}
        + \overline T_{\gammaone,\iota}^2
        + \overline T_{\gammatwo,\iota}^2,
    \]
    where $\overline T_{\gammaone,\gammatwo} \defeq \bitrans{\gammaone}{\gammatwo} + \bitransjac{\gammaone}{\gammatwo}$
    provides a form of simultaneous distance of the two transformations to the identity.
    %
    Various further over-estimations of $\bitransfull{\gammaone}{\gammatwo}$ are possible,
    but these usually destroy the crucial $O(\rho^2)$ property that we will later
    derive for specific transformations.
    At this point, it is threfore not clear whether $\bitransfull{\gammaone}{\gammatwo}$ in 
    general can be replaced by something significantly simpler and intuitive; alternative
    uses of the comparison condition with different transformations from those in the present
    work, may have different requirements from the useful set of comparison constants,
    possibly in some ways sharper than those herein.
\end{remark}

\begin{definition}
    \label{def:lipschitz-trans}
    We say that $R$ is \term{double-Lipschitz comparable}
    if there exists a constant $\RCa=\RCa(\Omega)$
    such that
    \[
        \text{for every }
        \begin{cases}
            u \in \BVspace(\Omega), \\
            \text{open set } U \subset \Omega, \text{~and}\\
            \gammaone,\gammatwo \in \LipClass(\Omega, U) \text{ with }
            \bitransfull{\gammaone}{\gammatwo} < 1,
        \end{cases}
    \]
    there holds
    \[
        R(\pf\gammaone u) + R(\pf\gammatwo u) - 2 R(u) \le 
        \RCa \bitransfull{\gammaone}{\gammatwo} \abs{D u}(\closure U).
    \]
    We also say that $R$ is \term{separably double-Lipschitz comparable}
    if there exist constants $\RCa=\RCa(\Omega)$ and $\RCs=\RCs(\Omega)$
    such that
    \[
        \text{for every }
        \begin{cases}
            u \in \BVspace(\Omega), \\
            \text{open set } U \subset \Omega, \\
            \gammaone,\gammatwo \in \LipClass(\Omega, U) \text{ with } \bitransfull{\gammaone}{\gammatwo} < 1, & \text{and} \\
            \text{Lipschitz $(m-1)$-graph } \Gamma \\
        \end{cases}
    \]
    holds
    \[
        \begin{split}
        R(\pf\gammaone u) + R(\pf\gammatwo u) - 2 R(u) 
        &
        \le
        \RCa \bitransfull{\gammaone}{\gammatwo}
        \abs{D u}(\closure U \setminus \Gamma)
        \\
        &
        \phantom{\le}
        +
        \RCs \bigl(\abs{D \pf\gammaone u}(\gammaone(\Gamma)) + \abs{D \pf\gammatwo u}(\gammatwo(\Gamma)) - 2 \abs{D u}(\Gamma)\bigr).
        \end{split}
    \]
\end{definition}

\begin{remark}
    Strictly speaking, we only need a local $\H^{m-1}$-\ae
    version of double-Lipschitz comparability, but the
    regularisation functionals in this Part 1
    satisfy the stronger and simpler definition above.
    Therefore we use it. In Part 2, we will need to 
    consider much more detailed variants.
    Also the bound $\bitransfull{\gammaone}{\gammatwo} < 1$
    is primarily needed for $\didtrans{\gammaone}, \didtrans{\gammatwo} < 1$
    and an arbitrary bound on $\bitransjac{\gammaone}{\gammatwo}$ for
    the treatment of Huber-regularised $\TV$.
\end{remark}

In order to show the existence of solutions to \eqref{eq:prob}, 
we require the following property from $\phi$.

\begin{definition}
    Let the domain $\Omega \subset \R^m$.
    We call $\phi: \R \to [0, \infty]$ an \term{admissible fidelity function on $\Omega$} if
    $\phi$ is convex and lower semi-continuous, $\phi(0)=0$, $\phi(-t)=\phi(t)$, ($t>0$),
    and for some $C>0$ holds
    \begin{equation}
        \label{eq:phi-l1-bound}
        \norm{u}_{\Uspace}  \le C\left(\int_\Omega \phi(u(x)) \d x + 1 \right),
        \quad
        (u \in \Uspace).
    \end{equation}
\end{definition}

For the study of the jump set $J_u$ of solutions to \eqref{eq:prob},
we require additionally the following increase criterion
to be satisfied by $\phi$.

\begin{definition}
    \label{def:phi-increase}
    We say that $\phi$ is \term{$p$-increasing} for $p \ge 1$,
    if there exists a constant $C_\phi>0$ for which
    \begin{equation}
        \notag
        \phi(x) - \phi(y) \le C_\phi (\abs{x}-\abs{y})\abs{x}^{p-1},
        \quad
        (x, y \in \R).
    \end{equation}
\end{definition}

\begin{remark}
    Definition \ref{def:phi-increase} implies that $\phi$
    is increasing. In fact $\phi'(x) \ge C_\phi \abs{x}^{p-1}$.
\end{remark}

\begin{example}
    Let $\phi(x)=\abs{x}^p$, ($p \ge 1$). Then $\phi$ is $p$-increasing
    with $C_\phi=p$ because
    \[
        \phi(x) - \phi(y) \le \phi'(x)(\abs{x}-\abs{y})
        =p(\abs{x}-\abs{y})\abs{x}^{p-1}.
    \]
    The $L^1$ fidelity $\phi(x)=\abs{x}$ is admissible for any $\Omega \subset \R^m$,
    including $\Omega=\R^m$. The $L^p$ fidelities $\phi(x)=x^p$ for $p>1$
    are admissible for any bounded $\Omega \subset \R^m$. 
\end{example}

The following, standard, result states that the problem \eqref{eq:prob}
is well-posed under the above assumptions. We may therefore proceed
with the analysis of the structure of the solutions
$u \in \BVspace(\Omega)$.

\begin{theorem}
    Let $f \in \Uspace$ satisfy $\int_\Omega \phi(f(x)) \d x < \infty$. 
    Suppose that $R$ is an admissible regularisation functional
    on $\Uspace$, and $\phi$ an admissible fidelity function for $\Omega$.
    Then there exists a solution $u \in \Uspace$ to \eqref{eq:prob},
    and any solution satisfies $u \in \BVUspace$.
\end{theorem}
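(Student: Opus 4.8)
The plan is to apply the direct method of the calculus of variations: the two coercivity estimates \eqref{eq:phi-l1-bound} and \eqref{eq:r-tv-bound} force any minimising sequence to be bounded in $\BVspace(\Omega)$, and the lower semicontinuity assumptions on $\phi$ and $R$ then let one pass to the limit.

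First I would note that the objective in \eqref{eq:prob} is proper: it is finite at $u = 0$, since $\int_\Omega \phi(f(x))\,dx < \infty$ by hypothesis and $R(0) < \infty$ for every regulariser considered here (indeed $R(0) = 0$). Write $M < \infty$ for the infimum and pick a minimising sequence $\{u^i\} \subset \Uspace$ with $\int_\Omega \phi(f - u^i)\,dx + R(u^i) \le M + 1$ for all large $i$, so that $\int_\Omega \phi(f - u^i)\,dx \le M+1$ and $R(u^i) \le M + 1$ separately. Applying \eqref{eq:phi-l1-bound} to $f - u^i$ gives $\norm{f - u^i}_{\Uspace} \le C(M + 2)$, whence $\norm{u^i}_{\Uspace} \le \norm{f}_{\Uspace} + C(M+2) =: C_1$; then \eqref{eq:r-tv-bound} yields $\norm{Du^i}_{\EUspace} \le C(1 + C_1 + M + 1) =: C_2$. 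Thus $\{u^i\}$ is bounded in $\BVspace(\Omega)$.

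Next I would invoke the compactness theorem for functions of bounded variation \cite{ambrosio2000fbv} to extract a (non-relabelled) subsequence and a limit $u \in \BVspace(\Omega)$ with $u^i \to u$ strongly in $\Uspace$ and $D u^i \weaktostar D u$ in $\EUspace$, i.e.\ $u^i \to u$ weakly* in $\BVspace(\Omega)$. Passing to a further subsequence so that also $u^i \to u$ pointwise a.e., nonnegativity and lower semicontinuity of $\phi$ together with Fatou's lemma give $\int_\Omega \phi(f - u)\,dx \le \liminf_i \int_\Omega \phi(f - u^i)\,dx$, while weak*-lower semicontinuity of $R$ on $\BVspace(\Omega)$, assumed in Definition~\ref{def:admissible}, gives $R(u) \le \liminf_i R(u^i)$. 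Adding these and using that $\{u^i\}$ was minimising, $\int_\Omega \phi(f - u)\,dx + R(u) \le M$, so $u$ is a solution of \eqref{eq:prob}. Conversely, if $u$ is any solution, then $\int_\Omega \phi(f - u)\,dx + R(u) = M < \infty$, so both terms are finite; since $u \in \Uspace$, \eqref{eq:r-tv-bound} then forces $\norm{Du}_{\EUspace} < \infty$, i.e.\ $Du$ is a finite $\R^m$-valued Radon measure, and hence $u \in \BVUspace$.

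The only genuinely delicate point is the compactness step: weak*-lower semicontinuity of $R$ is assumed only with respect to weak* convergence in $\BVspace(\Omega)$, which includes \emph{strong} $\Uspace$-convergence, not merely convergence in $L^1_\loc(\Omega)$. For bounded $\Omega$ this is exactly the standard compact embedding of $\BVspace(\Omega)$ into $\Uspace$; for unbounded $\Omega$ (admissible, e.g., with the $L^1$ fidelity) one additionally needs a standard tightness argument along the minimising sequence, the finiteness of the fidelity term preventing mass from escaping to infinity, which upgrades local compactness to strong $\Uspace$-compactness. Everything else is routine.
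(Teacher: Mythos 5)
Your proposal is correct and follows essentially the same route as the paper: the direct method, with \eqref{eq:phi-l1-bound} and \eqref{eq:r-tv-bound} bounding the minimising sequence in $\BVUspace$, BV compactness, and the assumed lower semicontinuity of $\phi$ and of $R$ with respect to weak* convergence; the paper's own proof is just a terser sketch of the same argument. Your added caveats (properness via $R(0)<\infty$, tightness for unbounded $\Omega$) are sensible refinements of points the paper leaves implicit.
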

\begin{proof}
    Clearly  the minimum in \eqref{eq:prob} is finite.
    Let $\{u^i\}_{i=0}^\infty$ be a minimising sequence
    for \eqref{eq:prob}. Minding that \eqref{eq:r-tv-bound}, 
    \eqref{eq:phi-l1-bound} bound $\{u^i\}_{i=0}^\infty$ 
    in $\BVUspace$, and that $R$ is lower semi-continuous
    with respect to weak* convergence in $\BVUspace$, and $\phi$ is
    lower semi-continuous, the claim follows by the 
    standard method of calculus of variations, see, e.g.,
    \cite{fonseca2007mmc}.
\end{proof}

\subsection{The main results}

Our main task in the rest of this paper is to prove the 
following result. 

\begin{theorem}
    \label{theorem:jumpset-strict}
    Let the domain $\Omega \subset \R^m$ be bounded with Lipschitz boundary.
    Suppose $R$ is an admissible double-Lipschitz comparable
    regularisation functional on $\Uspace$,
    and $\phi: [0, \infty) \to [0, \infty)$ 
    an admissible $p$-increasing fidelity function 
    for some $1 < p < \infty$.
    Let $f \in \BVUspace \isect L^\infty_\loc(\Omega)$, 
    and suppose $u \in \BVUspace \isect L^\infty_\loc(\Omega)$ 
    solves \eqref{eq:prob}.
    Then
    \[
        \H^{m-1}(J_u \setminus J_f)=0.
    \]
\end{theorem}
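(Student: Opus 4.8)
The plan is to argue by contradiction: suppose $\H^{m-1}(J_u \setminus J_f) > 0$. Since $J_u$ is countably $\H^{m-1}$-rectifiable, I can cover $\H^{m-1}$-almost all of $J_u \setminus J_f$ by Lipschitz $(m-1)$-graphs, and by a density/Besicovitch-type argument select a point $z_\Gamma \in J_u \setminus J_f$ that is a point of $\H^{m-1}$-density one for $J_u \setminus J_f$ relative to one such graph $\Gamma$, at which the jump traces $u^+(z_\Gamma) \ne u^-(z_\Gamma)$ are genuine one-sided Lebesgue limits, at which $f$ has an approximate limit (so $z_\Gamma \notin S_f$ modulo null sets), and at which $\Gamma$ has an approximate tangent plane $z_\Gamma^\perp$. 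Working in a small ball $U = \B(z_\Gamma, \rho)$, I then construct — this is deferred to \S\ref{sec:lipschitz} — two Lipschitz transformations $\gammaone, \gammatwo \in \LipClass(\Omega, U)$ that "shift" mass across $\Gamma$ in opposite normal directions by an amount controlled by a parameter, chosen so that $\bitransfull{\gammaone}{\gammatwo} = O(\rho^2)$ while the shift itself is $O(\rho)$; these are the competitors $\pf\gammaone u$ and $\pf\gammatwo u$ for \eqref{eq:prob}.

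**The energy comparison.** With $E(v) \defeq \int_\Omega \phi(f-v)\,\d x + R(v)$ denoting the objective, optimality of $u$ gives $E(\pf\gammaone u) + E(\pf\gammatwo u) - 2E(u) \ge 0$. The regularisation term is controlled directly by double-Lipschitz comparability (Definition~\ref{def:lipschitz-trans}):
\[
    R(\pf\gammaone u) + R(\pf\gammatwo u) - 2R(u) \le \RCa\, \bitransfull{\gammaone}{\gammatwo}\, \abs{Du}(\closure U) = O(\rho^2)\,\abs{Du}(\closure U).
\]
The heart of the matter — handled in \S\ref{sec:main} — is to show that the fidelity contribution $\int_\Omega \phi(f - \pf\gammaone u) + \phi(f - \pf\gammatwo u) - 2\phi(f-u)\,\d x$ is \emph{strictly negative of larger order}, specifically dominated above by $-c\,\rho^{m-1}$ times a positive constant depending on the jump height $\abs{u^+(z_\Gamma) - u^-(z_\Gamma)}$ and on the local boundedness of $f$ and $u$. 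The mechanism: near $z_\Gamma$, $u$ looks like a two-valued function jumping across $\Gamma$, while $f$ is (approximately) continuous there; shifting $\Gamma$ slightly in either normal direction replaces, on a layer of thickness $O(\rho)$ and area $O(\rho^{m-1})$, the value $u^+$ by $u^-$ or vice versa. Convexity of $\phi$ makes the second difference in the two opposite shifts nonpositive, and $p$-increasingness (Definition~\ref{def:phi-increase}) with $p>1$, combined with $f$ being trapped strictly between the traces' neighbourhood in an averaged sense, forces it to be \emph{strictly} negative at order $\rho^{m-1}$ — a standard "strict convexity wins" estimate using $\phi'(x) \ge C_\phi\abs{x}^{p-1}$. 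Since $\abs{Du}(\closure U) = O(\rho^{m-1})$ as well (by Proposition~\ref{prop:bv-density}, as $z_\Gamma \in \tilde J_u$), but carries the extra $\rho^2$ factor from $\bitransfull{\gammaone}{\gammatwo}$, the fidelity gain of order $\rho^{m-1}$ beats the regularisation loss of order $\rho^{m+1}$ for $\rho$ small, contradicting $E(\pf\gammaone u) + E(\pf\gammatwo u) - 2E(u) \ge 0$.

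**Main obstacle.** The delicate point is making the fidelity estimate rigorous with only \emph{approximate} (Lebesgue) limits rather than pointwise control: one must carry the $o(\rho^{m-1})$ error terms coming from the defect of $u$ from its traces, and of $f$ from its approximate limit, through the change of variables induced by $\gammaone, \gammatwo$ (which is where the Lusin $N$-property and the Jacobian factors $\jacobianf{m}{\gamma}{x}$, hence $\bitransjac{\gammaone}{\gammatwo}$, enter), and show these errors are genuinely lower order than the main gain. A secondary subtlety is the case where the approximate limit of $f$ at $z_\Gamma$ happens to equal one of the traces $u^\pm(z_\Gamma)$: then shifting in one direction gives no first-order gain, but $p$-increasingness with $p > 1$ still yields a strictly negative second-order-in-jump-height term from the \emph{other} direction, which is why the two-sided (double-Lipschitz) construction is essential and why the argument genuinely needs $p>1$. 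The bookkeeping of these error terms, deferred to \S\ref{sec:main}, is the real work; everything else is the orchestration described above.
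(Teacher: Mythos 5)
Your overall strategy is the paper's: argue by contradiction at a carefully chosen point of $J_u\setminus J_f$, push $u$ forward by two opposite Lipschitz shifts along a graph of $J_u$, bound the regulariser's increase via double-Lipschitz comparability, and win on the fidelity via convexity and $p$-increasingness. However, two of your key quantitative claims fail as stated. The first is the scaling. You work in a single ball of radius $\rho$ and claim simultaneously that the shift is $O(\rho)$, that $\bitransfull{\gammaone}{\gammatwo}=O(\rho^2)$, and that the fidelity second difference is at most $-c\rho^{m-1}$. The last is impossible: the perturbation lives in a set of volume $O(\rho^m)$ and the integrand change is bounded (by local boundedness of $u$ and $f$), so the fidelity change is $O(\rho^m)$ at best. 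Worse, the first two claims are mutually incompatible: the comparison constant measures the quadratic deviation of the Jacobians from the identity, which for an absolute shift $\delta$ supported on a region of width $\rho$ scales like $(\delta/\rho)^2$; with $\delta\sim\rho$ this is $O(1)$, not $O(\rho^2)$. Shrinking $\delta$ to $O(\rho^2)$ to restore the $O(\rho^2)$ bound reduces the fidelity gain to $O(\rho^{m+1})$ --- exactly the order of the regulariser loss --- and the comparison is a dead heat. The paper avoids this with \emph{two independent parameters}: a neighbourhood size $r$ and a shift fraction $\rho$ (absolute shift $\rho r$), yielding a fidelity gain $-C_1\rho r^m$ against a regulariser loss $C_2\rho^2 r^{m-1}$ with constants uniform in both parameters; one first fixes $r$ small (to control the trace and Lebesgue-point errors at level $\epsilon\rho r^m$) and then takes $\rho < C_1 r/C_2$. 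This decoupling is not cosmetic; without it the argument does not close.

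The second gap is the fidelity mechanism itself. Your competitors are the pure pushforwards $\pf\gammaone u$ and $\pf\gammatwo u$, and you assert that convexity plus $p$-increasingness makes their averaged fidelity strictly smaller than that of $u$. At leading order this is false: freezing $u$ at its traces $b^\pm=u^\pm(x_0)$ and $f$ at $a=\tilde f(x_0)$, the shift in one direction replaces $b^+$ by $b^-$ on a layer of volume $V$, contributing $\bigl(\phi(b^--a)-\phi(b^+-a)\bigr)V$, while the opposite shift replaces $b^-$ by $b^+$ on a layer of the same volume, contributing exactly the negative of that; the sum is identically zero, and no convexity argument turns an exact zero into a strictly negative quantity. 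The paper's device (Lemma \ref{lemma:phi-const-approx}) is to take as competitors the interpolants $\theta u+(1-\theta)\pf\gamma u$ with $\theta\in(0,1)$ chosen using $p>1$: the shifted layers then carry the genuinely intermediate values $(1-\theta)b^\mp+\theta b^\pm$, and the resulting quantity $K^+\zeta^++K^-\zeta^-$ can be made strictly negative, including in the borderline configuration $a=(b^++b^-)/2$. You need this interpolation (or an equivalent device); with the pure pushforwards the first-order fidelity gain vanishes and the contradiction evaporates.
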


\begin{remark}
    Observe that we require $u$ to be locally bounded. 
    This does not necessarily hold, and needs to be proved
    separately. It is well-known that it holds for $\TV$
    if $f \in L^\infty(\Omega)$, and is easy to show for 
    Huber-$\TV$ using the same barrier technique.
\end{remark}

We also show the following. We note that we only get strong
regularity if the solution is approximately piecewise constant.

\begin{theorem}
    \label{theorem:jumpset-l1}
    Let the domain $\Omega \subset \R^m$ be bounded with Lipschitz boundary.
    Suppose $R$ is an admissible separably double-Lipschitz
    comparable regularisation functional on $\Uspace$, and
    $\phi: [0, \infty) \to [0, \infty)$ an admissible 
    $1$-increasing fidelity function.
    Let $f \in \BVUspace$, and suppose $u \in \BVUspace$ 
    solves \eqref{eq:prob}. Then 
    \[
        \H^{m-1}(J_u \setminus (J_f \union \Lambda))=0,
    \]
    where $\Lambda = \Union_{i=1}^\infty \Lambda_i$
    for Lipschitz graphs $\{\Lambda_i\}_{i=1}^\infty$
    such that at $\H^{m-1}$-\ae point $x \in (\Lambda_i \isect J_u) \setminus J_f$,
    the \emph{$R$-curvature of $u$ along $\Lambda_i$} satisfies 
    \begin{equation}
        \label{eq:jumpset-l1-r-curvature}
        \CURVP_u^{R,\Lambda_i}(x)=\abs{u^+(x)-u^-(x)} C_\phi.
    \end{equation}
    This technical definition will be provided later
    in Definition \ref{def:r-curvature}.
    
    If $\Theta_m(\abs{Du} \restrict \Omega \setminus \Lambda_i; x)=0$
    at $\H^{m-1}$-\ae point $x \in \Lambda_i$, 
    then each $\Lambda_i$, ($i \in \Z^+$), is of class $C^{2,\allLambda}$ 
    and curvature $C_\phi/\RCs$ in the sense that
    $f_{\Lambda_i} \in C^{2,\allLambda}(V_{\Lambda_i})$ and
    \begin{equation}
        \label{eq:jumpset-l1-curvature}
        -\divergence \frac{\DIFFSS f_{\Lambda_i}(v)}{\sqrt{1+\norm{\DIFFSS f_{\Lambda_i}(v)}^2}}
        =C_\phi/\RCs,
        \quad (v \in V_{\Lambda_i}).
    \end{equation}
\end{theorem}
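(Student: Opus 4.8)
The plan is to localise the analysis near $\H^{m-1}$-almost every point of $J_u\setminus J_f$, perturb the minimiser by a carefully paired family of Lipschitz shift transformations, and extract an Euler--Lagrange-type balance from minimality. Because $\phi$ is only $1$-increasing, hence in effect affine, the leading-order fidelity gain obtained by sliding the jump surface of $u$ along its normal does not beat the regulariser; so in place of the outright contradiction available when $p>1$ one is left with an exact balance, which couples the first variation of $R$ under such slides --- the ``$R$-curvature'' --- with the fidelity, and this is precisely \eqref{eq:jumpset-l1-r-curvature}. Upgrading that identity to the classical prescribed-mean-curvature equation \eqref{eq:jumpset-l1-curvature} under the extra density hypothesis then just amounts to observing that, once the only surviving part of $Du$ near $\Lambda_i$ is the jump, the $R$-curvature collapses to $\RCs$ times classical mean curvature, followed by standard elliptic regularity. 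I expect the genuinely delicate step to be the identification of the $R$-curvature as a well-defined blow-up quantity for a general admissible regulariser, not the elliptic part.

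First I would use rectifiability of $J_u$ (Proposition \ref{prop:bv-density} and the $\BVspace$ facts in \S\ref{sec:prelim}) to cover $J_u$ by countably many Lipschitz $(m-1)$-graphs $\{\Lambda_i\}$, and fix a ``good'' point $x\in(\Lambda_i\isect J_u)\setminus J_f$: one at which $\Lambda_i$ has an approximate tangent hyperplane, the one-sided traces $u^\pm(x)$ exist and differ, $f$ admits an approximate limit $\tilde f(x)$ (so $x\notin J_f$), and $\abs{Du}$, $\abs{Df}$ have their expected densities. $\H^{m-1}$-a.e.\ point of $(\Lambda_i\isect J_u)\setminus J_f$ is good. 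For small $\rho>0$ I would invoke the construction of \S\ref{sec:lipschitz} to obtain a pair $\gammaone,\gammatwo\in\LipClass(\Omega,U)$, with $U$ a small neighbourhood of $x$, that slide $\Lambda_i$ near $x$ by a controlled amount along its normal while keeping $\bitransfull{\gammaone}{\gammatwo}=O(\rho^2)$ (in particular $<1$ for $\rho$ small); the point of the pairing is exactly to make this comparison constant quadratically small, whereas a single shift would only give $O(\rho)$. Minimality of $u$ in \eqref{eq:prob} against the admissible competitors $\pf\gammaone u$ and $\pf\gammatwo u$ then yields
\[
    0\le\int_\Omega\bigl[\phi(f-\pf\gammaone u)+\phi(f-\pf\gammatwo u)-2\phi(f-u)\bigr]\d x+\bigl[R(\pf\gammaone u)+R(\pf\gammatwo u)-2R(u)\bigr].
\]

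For the regulariser I would apply separable double-Lipschitz comparability with the graph $\Lambda_i$: the bulk term is at most $\RCa\,\bitransfull{\gammaone}{\gammatwo}\,\abs{Du}(\closure U\setminus\Lambda_i)=O(\rho^2)\abs{Du}(\closure U\setminus\Lambda_i)$, negligible at a good point against the jump contribution, while the graph term $\RCs\bigl(\abs{D\pf\gammaone u}(\gammaone(\Lambda_i))+\abs{D\pf\gammatwo u}(\gammatwo(\Lambda_i))-2\abs{Du}(\Lambda_i)\bigr)$ is, after rescaling and passing to the blow-up $\rho\downto0$, $U\downto\{x\}$, exactly the quantity that Definition \ref{def:r-curvature} isolates as $\CURVP_u^{R,\Lambda_i}(x)$. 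On the fidelity side I would use that the competitors $\pf\gammaone u$, $\pf\gammatwo u$ differ from $u$ only on the thin slabs swept between $\Lambda_i$ and its images, where they carry one trace in place of the other; combining the $1$-increasing bound with its converse $\phi'\ge C_\phi$ (the remark after Definition \ref{def:phi-increase}), approximate continuity of $f$ at $x$, and the elementary geometry of offsetting the graph, the leading fidelity contribution has density $\abs{u^+(x)-u^-(x)}\,C_\phi$ against the same geometric factor as on the regulariser side --- this is where the precise $1$-increasing hypothesis enters. Since the inequality must hold both for the pair and for its ``mirror'' slide, these two leading contributions cancel exactly, which is \eqref{eq:jumpset-l1-r-curvature}; taking $\Lambda=\Union_{i=1}^\infty\Lambda_i$ yields $\H^{m-1}(J_u\setminus(J_f\union\Lambda))=0$.

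Finally, assume $\Theta_m(\abs{Du}\restrict\Omega\setminus\Lambda_i;x)=0$ at $\H^{m-1}$-a.e.\ $x\in\Lambda_i$. Then in the blow-up the only part of $Du$ near $\Lambda_i$ that survives is the jump $(u^+-u^-)\otimes\nu_{\Lambda_i}\H^{m-1}\restrict\Lambda_i$, so $\CURVP_u^{R,\Lambda_i}$ equals $\RCs\abs{u^+-u^-}$ times the distributional mean curvature of $\Lambda_i$, and \eqref{eq:jumpset-l1-r-curvature} becomes the weak form of $-\divergence\bigl(\DIFFSS f_{\Lambda_i}/\sqrt{1+\norm{\DIFFSS f_{\Lambda_i}}^2}\bigr)=C_\phi/\RCs$. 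The right-hand side being a bounded, indeed constant, function, standard regularity theory for the prescribed-mean-curvature operator (De Giorgi--Nash--Moser type estimates giving $f_{\Lambda_i}\in C^{1,\lambda}$, then Schauder bootstrapping) yields $f_{\Lambda_i}\in C^{2,\lambda}(V_{\Lambda_i})$ for every $\lambda\in(0,1)$, i.e.\ $f_{\Lambda_i}\in C^{2,\allLambda}(V_{\Lambda_i})$, and \eqref{eq:jumpset-l1-curvature} holds classically. The crux of the whole argument, as flagged above, is making the middle paragraph rigorous for a general separably double-Lipschitz comparable $R$: that the rescaled increment $R(\pf\gammaone u)+R(\pf\gammatwo u)-2R(u)$ has a genuine blow-up limit at $\H^{m-1}$-a.e.\ jump point (the content Definition \ref{def:r-curvature} must be engineered to provide), that the matching fidelity term carries exactly the coefficient $\abs{u^+-u^-}C_\phi$ uniformly over the merely Lipschitz geometry of $\Lambda_i$, and that the bulk error $O(\rho^2)\abs{Du}(\closure U\setminus\Lambda_i)$ is controlled relative to the graph term --- which is why the regularity conclusion, unlike \eqref{eq:jumpset-l1-r-curvature}, needs the density hypothesis.
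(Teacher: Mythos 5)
Your overall strategy is indeed the paper's: paired Lipschitz shifts, separable double-Lipschitz comparability to split a bulk term from a graph term, an exact first-order balance in place of the $p>1$ contradiction, and elliptic regularity under the density hypothesis. But there are two genuine gaps, precisely at the points you flag as delicate, and neither is resolved by the argument as written. First, the summed (paired) minimality inequality only yields a one-sided bound: for $p=1$ the summed fidelity change over the mirror pair can vanish to leading order, so all you get is that $\abs{D\pf\gammaone u}(\gammaone(\Lambda_i))+\abs{D\pf\gammatwo u}(\gammatwo(\Lambda_i))-2\abs{Du}(\Lambda_i)\ge -o(\rho)$. ``Exact cancellation'' does not follow; in particular neither the existence of the blow-up limit in Definition \ref{def:r-curvature} nor the identity \eqref{eq:jumpset-l1-r-curvature} is obtained this way. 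The paper closes this by a squeeze (Lemma \ref{lemma:r-curvature} together with Lemma \ref{lemma:r-curvature-sol}): the one-sided fidelity estimate of Lemma \ref{lemma:phi-approx-l1} in the fidelity-decreasing direction forces a lower bound on the upper transformation differential of $R$, and the paired $O(\rho^2)$ comparability inequality is then used to \emph{transfer} the one-sided minimality bound from the opposite shift into the matching upper bound (see \eqref{eq:r-curvature-second2-transfer}). That transfer step is the missing idea; without it you have an inequality, not an identity, and no existence of the limit.

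Second, you apply elliptic regularity to ``\eqref{eq:jumpset-l1-r-curvature} as the weak form of the prescribed mean curvature equation'', but \eqref{eq:jumpset-l1-r-curvature} is only established $\H^{m-1}$-a.e.\ on $(J_u\setminus J_f)\isect\Lambda_i$, which need not be relatively open or of full measure in $V_{\Lambda_i}$; the weak formulation \eqref{eq:curvconst-no-lambda} is therefore not available on any open set and Lemma \ref{lemma:regul} cannot be invoked. The paper inserts a Baire category argument: the sets where the localised curvature functional stays a fixed distance from $C_\phi$ are closed, must have empty relative interior (else they would contradict the a.e.\ identity on a positive-measure piece of $J_u\setminus J_f$), so their complement contains a dense open set, and the graphs $\Lambda_i$ are \emph{redefined} as the images of these dense open sets before Lemma \ref{lemma:regul} is applied on small balls. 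Two smaller points: the regularity lemma is not off-the-shelf De Giorgi--Nash--Moser --- the paper perturbs the area functional by a quartic penalty to force the growth conditions of a quasilinear existence theorem and identifies the resulting smooth solution with $f_{\Lambda_i}$ via strict convexity and uniqueness of minimisers; and the identification of $\CURVP_u^{R,\Lambda_i}$ with $\RCs$ times classical mean curvature requires using separable comparability in both directions (Lemmas \ref{lemma:curvp-d-pf} and \ref{lemma:div-curv}), not just a formal blow-up.
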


\begin{remark}
    It is not very difficult to improve Theorem \ref{theorem:jumpset-l1}
    a little bit. Namely, we can replace the assumption
    $\Theta_m(\abs{Du} \restrict \Omega \setminus \Lambda_i; x)=0$
    by that of $\grad u$ having one-sided Lebesgue limits at $x$,
    with corresponding normal $\nu=\nu_{J_u}(x)$. We will in another
    context in Part 2 study techniques that would allow us 
    to do this. We do not however pursue this route of improving
    Theorem \ref{theorem:jumpset-l1}, as the small improvement 
    would still not be entirely satisfactory -- at least not
    without corresponding results to prove that the limits
    actually do exist. This fascinating question is outside
    the scope of the present manuscript.
\end{remark}

\begin{remark}
    For a demonstration that we cannot in general set $\Lambda=\emptyset$ in
    Theorem \ref{theorem:jumpset-l1}, we refer to the comprehensive treatment
    in \cite{duval2009tvl1} about the case of $R=\TV$, $\phi=\abs{\cdot}$, 
    and $f=\chi_A$ for a suitable set $A \subset \Omega$. For example if $A$
    is  a square, then the optimal solution $u$ will be a square with
    rounded corners of curvature $1/\alpha=C_\phi/R^s$
    (or the empty set, when this is not possible).
    Using \cite[Theorem 8]{papafitsoros2015asymptotic}, this example
    can also be extended to the higher-order regulariser $\TGV^2$ \cite{bredies2009tgv},
    which we treat in Part 2 \cite{tuomov-jumpset2}.
\end{remark}

\section{First-order regularisation functionals}
\label{sec:reg}

Before embarking on the proofs of the main results, we introduce
a class of admissible first-order regularisation functionals:
the conventional total variation, as well as a class with 
an additional convex energy, including Huber-regularised TV.
We finish by discussing the Perona-Malik 
and non-convex $\TV$ models 
in a few remarks. In Part 2 \cite{tuomov-jumpset2}
we will concentrate on higher-order
regularisation functionals: second-order total generalised 
variation ($\TGV^2$), and infimal convolution TV (ICTV),
whose analysis is more involved. We however remark that both
Theorem \ref{theorem:jumpset-strict} and Theorem \ref{theorem:jumpset-l1}
can easily be derived for ICTV from the corresponding result for TV.
For $\TGV^2$ this is not the case.

\subsection{Total variation}

As we well recall, (isotropic) total variation is defined as
\[
    \TV(u) \defeq \abs{Du}(\Omega)
        =\sup\left\{ \int_\Omega \iprod{\divergence \varphi(x)}{u(x)} \d x
            \middle| 
            \varphi \in C_c^\infty(\Omega), \norm{\varphi}_{2,L^\infty(\Omega)} \le 1
        \right\}.
\]
We now show that it satisfies the conditions of 
Theorem \ref{theorem:jumpset-strict}.

\begin{proposition}
    \label{prop:tv-admissible}
    The functional $R(u)=\alpha\TV(u)$ for $\alpha>0$ is admissible
    and (separably) double-Lipschitz comparable.
    Moreover, $\norm{u}_{L^\infty(\Omega)} \le \norm{f}_{L^\infty(\Omega)}$
    for solutions $u$ to \eqref{eq:prob}.
\end{proposition}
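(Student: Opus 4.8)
The plan is to check the three assertions separately. Write $R(u)=\alpha\abs{Du}(\Omega)$. Convexity and lower semicontinuity with respect to weak* convergence in $\BVUspace$ are immediate from the supremum representation of $\TV$ recalled above, and \eqref{eq:r-tv-bound} is trivial, since $\norm{Du}_{\EUspace}=\TV(u)=\alpha^{-1}R(u)\le C(1+\norm{u}_{\Uspace}+R(u))$ with $C=\max\{1,\alpha^{-1}\}$. The $L^\infty$ bound is the classical barrier argument, and the one piece of real work is double-Lipschitz comparability, which I would reduce to a transformation formula for $\TV$ under bi-Lipschitz pushforwards.

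The key tool is this: for $\gamma\in\LipClass(\Omega)$ and $u\in\BVspace(\Omega)$, writing $Du=\sigma\abs{Du}$ with $\norm{\sigma(x)}=1$ for $\abs{Du}$-a.e.\ $x$, one has $\pf\gamma u\in\BVspace(\Omega)$ and
\[
    D(\pf\gamma u)=\gamma_\#\bigl(\lipjac{\gamma}\cdot Du\bigr),
    \qquad\text{hence}\qquad
    \abs{D(\pf\gamma u)}(B)=\int_{\inv\gamma(B)}\norm{\lipjac{\gamma}(x)\,\sigma(x)}\d\abs{Du}(x)
\]
for every Borel $B\subset\Omega$, where $\lipjac{\gamma}$ is the matrix field \eqref{eq:lipjac} and $\lipjac{\gamma}\cdot Du$ is the $\R^m$-valued measure with density $x\mapsto\lipjac{\gamma}(x)\sigma(x)$ relative to $\abs{Du}$. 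I would prove this by strict approximation: pick $u_n\in C^\infty(\Omega)\cap\BVspace(\Omega)$ with $u_n\to u$ strictly; then $\pf\gamma u_n=u_n\circ\inv\gamma\in W^{1,1}(\Omega)$ with $\grad(\pf\gamma u_n)(y)=\grad\inv\gamma(y)\,\grad u_n(\inv\gamma(y))$ for $\L^m$-a.e.\ $y$, and applying the area formula \eqref{eq:areaformula} to the injective map $\gamma$ (whose Lusin $N$-property comes with membership in $\LipClass(\Omega)$) converts the substitution $y=\gamma(x)$ into $D(\pf\gamma u_n)=\gamma_\#(\lipjac{\gamma}\cdot\grad u_n\,\L^m)$. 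Since $\lipjac{\gamma}\in L^\infty(\Omega;\R^{m\times m})$ and, again by the change of variables, $\pf\gamma u_n\to\pf\gamma u$ in $L^1(\Omega)$, the sequence $\{\pf\gamma u_n\}$ is bounded in $\BVspace(\Omega)$, so $\pf\gamma u\in\BVspace(\Omega)$ and $D(\pf\gamma u_n)\weaktostar D(\pf\gamma u)$; identifying the limit with $\gamma_\#(\lipjac{\gamma}\cdot Du)$ needs care because $\lipjac{\gamma}$ is only a bounded weight, and I would handle it by mollifying $\lipjac{\gamma}$ and invoking a Reshetnyak-type continuity argument (using $\abs{Du_n}\weaktostar\abs{Du}$). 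Finally $\abs{\gamma_\#\mu}=\gamma_\#\abs{\mu}$ since $\gamma$ is a bijection, which yields the displayed formula for $\abs{D(\pf\gamma u)}$.

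With this formula the comparability estimates are a direct computation. Fix $u$, an open $U\subset\Omega$, $\gammaone,\gammatwo\in\LipClass(\Omega,U)$ with $\bitransfull{\gammaone}{\gammatwo}<1$, and a Lipschitz $(m-1)$-graph $\Gamma$. For each $\gamma\in\{\gammaone,\gammatwo\}$ the decomposition $\Omega=\gamma(\Gamma)\sqcup\gamma(\Omega\setminus\Gamma)$ (valid as $\gamma$ is a bijection) and the transformation formula give $\TV(\pf\gamma u)\le\abs{D\pf\gamma u}(\gamma(\Gamma))+\int_{\Omega\setminus\Gamma}\norm{\lipjac{\gamma}(x)\sigma(x)}\d\abs{Du}(x)$; subtracting $2\TV(u)=2\abs{Du}(\Gamma)+2\abs{Du}(\Omega\setminus\Gamma)$ and multiplying by $\alpha$,
\begin{align*}
    R(\pf\gammaone u)+R(\pf\gammatwo u)-2R(u)
    \le{}& \alpha\bigl(\abs{D\pf\gammaone u}(\gammaone(\Gamma))+\abs{D\pf\gammatwo u}(\gammatwo(\Gamma))-2\abs{Du}(\Gamma)\bigr)\\
    &{}+\alpha\int_{\Omega\setminus\Gamma}\bigl(\norm{\lipjac{\gammaone}(x)\sigma(x)}+\norm{\lipjac{\gammatwo}(x)\sigma(x)}-2\bigr)\d\abs{Du}(x).
\end{align*}
For $x\notin U$ we have $\gammaone(x)=\gammatwo(x)=x$, hence $\lipjac{\gammaone}(x)=\lipjac{\gammatwo}(x)=I$ and the last integrand vanishes; for $x\in U$ it is at most $\bitrans{\gammaone}{\gammatwo}$ of \eqref{eq:bitrans}, and $\bitrans{\gammaone}{\gammatwo}\le\bitransfull{\gammaone}{\gammatwo}$ because $\bitransjac{\gammaone}{\gammatwo},\didtrans{\gammaone}^2,\didtrans{\gammatwo}^2\ge0$ in \eqref{eq:bitransfull}. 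Thus the last term is bounded by $\alpha\bitransfull{\gammaone}{\gammatwo}\abs{Du}(\closure U\setminus\Gamma)$, which is exactly separable double-Lipschitz comparability with $\RCa=\RCs=\alpha$; the choice $\Gamma=\emptyset$ gives the plain version with $\RCa=\alpha$ and $\abs{Du}(\closure U)$ in place of $\abs{Du}(\closure U\setminus\Gamma)$.

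For the $L^\infty$ bound set $M\defeq\norm{f}_{L^\infty(\Omega)}$ (finite, or there is nothing to prove) and let $u_M\defeq\max\{\min\{u,M\},-M\}$. Being a post-composition of $u$ with the $1$-Lipschitz truncation, $\TV(u_M)\le\TV(u)$; and since $u_M(x)$ is the metric projection of $u(x)$ onto $[-M,M]\ni f(x)$, we have $\abs{f(x)-u_M(x)}\le\abs{f(x)-u(x)}$ pointwise, strictly wherever $\abs{u(x)}>M$. As $\phi$ is increasing, $\int_\Omega\phi(f-u_M)\le\int_\Omega\phi(f-u)$, and this is strict once $\L^m(\{\abs u>M\})>0$ because $\phi$ is in fact strictly increasing on $[0,\infty)$ (for the $p$-increasing fidelities at hand, $\phi'(t)\ge C_\phi t^{p-1}$ for $t>0$ forces $\phi(t_2)>\phi(t_1)$ whenever $0\le t_1<t_2$). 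Hence $\L^m(\{\abs u>M\})>0$ would make $u_M$ a strictly better competitor, contradicting optimality of $u$, so $\norm{u}_{L^\infty(\Omega)}\le\norm{f}_{L^\infty(\Omega)}$. The only genuinely delicate point in all of this is passing the bounded-but-possibly-discontinuous weight $\lipjac{\gamma}$ through the weak* limit in the transformation formula; alternatively one first proves that formula for Lipschitz $u$ directly from the area formula and then extends it to $\BVspace(\Omega)$, which sidesteps the difficulty. Everything else is routine.
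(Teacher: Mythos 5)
Your treatment of admissibility and of the $L^\infty$ bound is fine (for the latter you use the $1$-Lipschitz truncation and monotonicity of the $\BVspace$ chain rule rather than the co-area formula the paper invokes; this is exactly the argument the paper itself uses for Huber-$\TV$, so nothing is lost). The real issue is the transformation formula
$D(\pf\gamma u)=\gamma_\#\bigl(\lipjac{\gamma}\cdot Du\bigr)$
on which your entire comparability argument rests. As an identity for the \emph{full} derivative measure of a general $u\in\BVspace(\Omega)$ this is true, but neither of your proposed justifications closes the argument. The weight $\lipjac{\gamma}$ is merely $L^\infty$, and its discontinuity set is precisely where the singular part of $Du$ (in particular $D^ju$) may concentrate; Reshetnyak-type continuity requires a continuous integrand, and mollifying $\lipjac{\gamma}$ does not help because $\lipjac{\gamma}^\epsilon\to\lipjac{\gamma}$ only $\L^m$-a.e., which says nothing $\abs{Du}^s$-a.e., and the error $\int\norm{\lipjac{\gamma}-\lipjac{\gamma}^\epsilon}\,\d\abs{Du_n}$ is not controlled uniformly in $n$ when $\abs{Du_n}$ concentrates near $J_u$. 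The same identification problem defeats the ``prove for Lipschitz $u$ and extend'' route: strict convergence identifies total masses, not the weighted pushforward measure. A correct proof of the exact formula would have to treat the three parts of $Du$ separately (absolutely continuous part via the area formula, jump part via rectifiability of $J_u$ and the cofactor identity relating $\lipjac{\gamma}\nu_{J_u}$ to the tangential $(m-1)$-Jacobian and the transformed normal, Cantor part via a further blow-up or rank-one argument), which is substantially more than what you sketch.

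It is worth seeing how the paper sidesteps exactly this difficulty. It proves the transformation formula only for $D^a u$ (Lemma \ref{lemma:da-gammau}), then \emph{sums the estimates for $\gammaone$ and $\gammatwo$ before passing to the limit}: the pointwise bound $\norm{\lipjac{\gammaone}(x)v}+\norm{\lipjac{\gammatwo}(x)v}-2\norm{v}\le\bitrans{\gammaone}{\gammatwo}\norm{v}$ replaces the discontinuous matrix weight by a constant, so that after strictly approximating $u$ by $C^1$ functions one only needs convergence of total variations $\abs{Du^i}(U')\to\abs{Du}(U')$ on sets with $\abs{Du}(\BD U')=0$ -- no weighted limits with $L^\infty$ weights ever appear. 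For the separable version it approximates $u$ strictly on each side of $\Gamma$ separately and uses continuity of the trace under strict convergence. If you want to keep your cleaner one-shot computation, you must either prove the exact pushforward identity part by part as indicated above, or restructure the limit passage along the paper's lines; as written, the key lemma is asserted rather than proved.
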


We use the following simple lemma for the proof.

\begin{lemma}
    \label{lemma:da-gammau}
    Let $u \in \BVspace(\Omega)$ and $\gamma \in \LipClass(\Omega)$. Then
    $
        D^a(\pf\gamma u) =  \grad \inv \gamma \pf\gamma \grad u \L^m,
    $
    that is,
    \[
        \grad(\pf\gamma u)(x)
        = \grad \inv \gamma(x) \pf\gamma \grad u (x)
        = \pf\gamma(\inv{[\grad \gamma]} \grad u)(x),
        \quad
        (\L^m\text{-\ae } x \in \Omega).
    \]
\end{lemma}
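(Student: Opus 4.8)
The plan is to reduce the claimed measure identity to a pointwise chain rule for the approximate differential, and then combine that with the $\BVspace$ structure theory from \S\ref{sec:prelim}. First, since $u \in \BVspace(\Omega)$ and $\gamma,\inv\gamma$ are Lipschitz, a routine change of variables shows $\pf\gamma u = u\circ\inv\gamma \in \BVspace(\Omega)$ with $\abs{D(\pf\gamma u)}(\Omega) \le C\abs{Du}(\Omega)$; hence by Proposition \ref{prop:bv-density}(i) and the preceding discussion, $D^a(\pf\gamma u)$ has $\L^m$-density $\L^m$-a.e.\ equal to the approximate differential $\grad(\pf\gamma u)$. Since $\grad\inv\gamma$ is bounded and $\grad u \in L^1$ while $\inv\gamma$ is bi-Lipschitz, $\grad\inv\gamma\,\pf\gamma\grad u \in L^1(\Omega;\R^m)$, so the asserted measure identity is equivalent to the displayed pointwise formula. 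Next, by Proposition \ref{prop:bv-density} applied to $u$, fix an $\L^m$-negligible set $Z$ off which $u$ is approximately differentiable with differential $\grad u$; by Rademacher's theorem $\inv\gamma$ is classically differentiable off an $\L^m$-negligible set $N$; and, as $\gamma$ has the Lusin $N$-property, $\inv\gamma^{-1}(Z) = \gamma(Z)$ is $\L^m$-negligible too. It therefore suffices to show that every $x \notin N \union \gamma(Z)$ is a point of approximate differentiability of $\pf\gamma u$ with $\grad(\pf\gamma u)(x) = \grad\inv\gamma(x)\,\grad u(\inv\gamma(x))$.

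Fix such an $x$ and put $y_0 \defeq \inv\gamma(x) \notin Z$. The formula is then proved by a blow-up: using $\inv\gamma(x') = y_0 + \grad\inv\gamma(x)^*(x'-x) + o(\abs{x'-x})$ one rewrites the error of the candidate differential $x' \mapsto \iprod{\grad\inv\gamma(x)\grad u(y_0)}{x'-x}$ as the error of $u$'s approximate differential at $y_0$, evaluated at the increment $w \defeq \inv\gamma(x')-y_0$ (which satisfies $\abs{w} \le \Lip(\inv\gamma)\abs{x'-x}$), plus an $o(\abs{x'-x})$ remainder. Matters thus reduce to showing that for each $\epsilon>0$ the set $\{x' : \abs{u(\inv\gamma(x')) - u(y_0) - \iprod{\grad u(y_0)}{\inv\gamma(x')-y_0}} > \epsilon\abs{\inv\gamma(x')-y_0}\}$ has $\L^m$-density $0$ at $x$. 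This set equals $\gamma(E_\epsilon)$, where $E_\epsilon$ is the corresponding bad set for $u$ at $y_0$, which has $\L^m$-density $0$ at $y_0$ by approximate differentiability. The one genuinely non-routine point is transferring this density from $y_0$ to $x$, and this is where bi-Lipschitzness is used: Lipschitzness of $\inv\gamma$ gives $\gamma(E_\epsilon) \isect \B(x,r) \subset \gamma\bigl(E_\epsilon \isect \B(y_0,\Lip(\inv\gamma)r)\bigr)$, so by the Lipschitz image bound (the area formula \eqref{eq:areaformula} with $g\equiv 1$) $\L^m(\gamma(E_\epsilon)\isect\B(x,r)) \le \Lip(\gamma)^m\,\L^m(E_\epsilon\isect\B(y_0,\Lip(\inv\gamma)r))$, and dividing by $\omega_m r^m$ and letting $r \downto 0$ gives density $0$. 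This establishes the first equality.

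For the second equality it suffices to show $\grad\inv\gamma(x) = \inv{[\grad\gamma(\inv\gamma(x))]}$ for $\L^m$-a.e.\ $x$, which follows by differentiating $\gamma\circ\inv\gamma = \iota$: by the classical chain rule for a composition of two Lipschitz maps, $\grad\inv\gamma(x)\,\grad\gamma(\inv\gamma(x)) = I$ wherever $\inv\gamma$ is differentiable at $x$ and $\gamma$ is differentiable at $\inv\gamma(x)$, which — again by Rademacher together with the Lusin $N$-property of $\gamma$ — holds $\L^m$-a.e.; in particular $\grad\inv\gamma(x)$ is invertible a.e.\ with the stated inverse, so $\grad\inv\gamma(x)\,\pf\gamma\grad u(x) = \pf\gamma\bigl(\inv{[\grad\gamma]}\grad u\bigr)(x)$. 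I expect the density-transfer estimate to be the main obstacle: it is the only step that must quantitatively exploit that $\gamma$ is bi-Lipschitz and has the Lusin $N$-property in order to convert a density statement about $u$ near $y_0$ into one about $\pf\gamma u$ near $x$; everything else is bookkeeping with Rademacher's theorem and Proposition \ref{prop:bv-density}. (One could instead give an integration-by-parts proof, changing variables in $\int u(\inv\gamma(x))\divergence\varphi(x)\d x$ via \eqref{eq:areaformula} and invoking $\divergence(\operatorname{cof}\grad\gamma)=0$, but the distributional Piola identity needs care at the merely-Lipschitz level, so the blow-up route seems preferable.)
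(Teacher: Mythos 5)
Your proof is correct and follows essentially the same route as the paper's: Rademacher's theorem plus the Lusin $N$-property to get the a.e.\ inverse function theorem $\grad\inv\gamma(x)=\inv{[\grad\gamma(\inv\gamma(x))]}$, the Calder\'on--Zygmund identification of $D^a v$ with the approximate differential, and a pointwise chain rule at $\L^m$-a.e.\ $x$. The only difference is that you spell out the density-transfer step (via the Lipschitz image bound) that the paper dismisses as ``easily seen''.
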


\begin{proof}
    We observe first of all that the inverse function theorem trivially holds almost
    everywhere for $\gamma \in \LipClass(\Omega)$. Indeed, using Rademacher's theorem 
    and the Lusin $N$-property, we see that $\iota(x) = \gamma(\inv \gamma(x))$ satisfies
    $\grad \iota(x) =  \grad \gamma(\inv\gamma(x)) \grad \inv \gamma(x)$
    for $\L^m$-\ae $x \in \Omega$. Repeating the same argument on the formulation
    $\iota(x) = \inv \gamma(\gamma(x))$, therefore
    $[\grad \gamma(\inv\gamma(x))]^{-1}=\grad \inv \gamma(x)$ for $\L^m$-\ae $x \in \Omega$.
    
    By the Calder\'on-Zygmund theorem, $D^a v=\grad v \L^m$, where $\grad v$ is the approximate
    differential of $v \in \BVspace(\Omega)$. This is defined at almost every $x \in \Omega$ as 
    $\grad v(x)=L$, where $L$ satisfies
    \[
        \lim_{\rho \downto} \dashint_{\B(x, \rho)} \frac{\abs{v(y)-\tilde v(x)-\iprod{L}{y-x}}}{\rho} \d y = 0.
    \]
    Let $x \in \Omega$ be a point such that $\tilde u(y)$ and $\grad u(y)$ exist for $y=\inv\gamma(x)$.
    Since $\gamma$ has the Lusin $N$-property, $\L^m$-\ae $x \in \Omega$ satisfies this.
    Clearly, by a simple application of the area formula, if $v=\pf\gamma u$, then
    $\tilde v(x)=\tilde u(y)$. Therefore, if we define 
    $L=[\grad \gamma(y)]^{-1} \grad u(y)=\grad \inv \gamma(x) \pf\gamma\grad u(x)$, 
    it is easily seen that $\grad(\pf\gamma u)(x)=L$ exists, and has the required form.
\end{proof}

\begin{proof}[Proof of Proposition \ref{prop:tv-admissible}]
    The requirements of admissibility in Definition \ref{def:admissible} are 
    trivial in this case.
    Also $\norm{u}_{L^\infty(\Omega)} \le M \defeq \norm{f}_{L^\infty(\Omega)}$ 
    is well-known for solutions $u$ to \eqref{eq:prob} with total variation
    regularisation. Indeed, comparing a purported solution $u$ that violates this
    with $\tilde u \defeq \min\{\max\{-M, u\}, M\}$, and referring to the co-area
    formula \eqref{eq:tv-coarea}, we easily obtain a contradiction.

    We therefore only have to prove (separable) double-Lipschitz comparability.
    We may without loss of generality take $\alpha=1$.
    We let $\gamma \in \LipClass(\Omega; U)$ for some open set $U \subset \Omega$,
    and pick $u \in \BVUspace$. By Lemma \ref{lemma:da-gammau}, we have
    \begin{equation}
        \label{eq:tv-sep}
        \begin{split}
        \abs{D \pf\gamma u}(\Omega)
        &
        =\abs{D^a \pf\gamma u}(\Omega) + \abs{D^s \pf\gamma u}(\Omega)
        \\
        &
        =\abs{ \grad \inv \gamma \pf\gamma \grad u \L^m}(\Omega) + \abs{D^s \pf\gamma u}(\Omega).
        \end{split}
    \end{equation}
    Since $\gamma(x) = x$ for $x \in \Omega \setminus U$, we may calculate
    using the area formula
    \[
        \begin{split}
        \abs{ \grad \inv \gamma \pf\gamma \grad u \L^m}(\Omega)
        &
        =\int_\Omega \norm{ \grad \inv \gamma(x) \grad u(\inv\gamma(x))} \d x
        \\
        &
        =
        \abs{D^a u}(\Omega \setminus U)
        +
        \int_U \norm{\lipjac{\gamma}(x) \grad u(x)} \d x.
        \end{split}
    \]
    Thus, with $\gammaone,\gammatwo \in \LipClass(\Omega, U)$,
    referring to the definition of $\bitrans{\gammaone}{\gammatwo}$, we obtain
    \[
        \begin{split}
        \abs{ \grad \inv \gammaone \pf\gammaone \grad u \L^m}(\Omega) 
        +
        \abs{ \grad \inv \gammatwo \pf\gammatwo \grad u \L^m}(\Omega) 
        - 2 \abs{D^a u}(\Omega)
        \le
        \bitrans{\gammaone}{\gammatwo}
        \abs{D^a u}(U).
        \end{split}
    \]
    Recalling \eqref{eq:tv-sep}, and minding that
    \[
        \abs{D^s \pf\gammaone u}(\Omega \setminus U)
        =
        \abs{D^s \pf\gammatwo u}(\Omega \setminus U)
        =
        \abs{D^s u}(\Omega \setminus U),
    \]
    we deduce
    \begin{equation}
        \label{eq:tv-double-lip-singular}
        \begin{split}
        \abs{D \pf\gammaone u}(\Omega)
        &
        +
        \abs{D \pf\gammatwo u}(\Omega)
        -2\abs{D u}(\Omega)
        \\
        &
        \le
        \bitrans{\gammaone}{\gammatwo}
        \abs{D^a u}(U)
        +
        \bigl(
        \abs{D^s \pf\gammaone u}(U)
        + \abs{D^s \pf\gammatwo u}(U)
        -
        2 \abs{D^s u}(U)
        \bigr).
        \end{split}
    \end{equation}
    This almost proves (separable) double-Lipschitz comparability,
    we just have to modify the singular part appropriately.
    
    To see (non-separable) double-Lipschitz comparability,
    we take a strictly converging approximation sequence
    $\{u^i\}_{i=1}^\infty \subset C^1(\Omega)$
    of $u$. Then $D^s u^i=0$, so
    \eqref{eq:tv-double-lip-singular} proves
    \[
        \abs{D \pf\gammaone u^i}(\Omega)
        +
        \abs{D \pf\gammatwo u^i}(\Omega)
        -2\abs{D u^i}(\Omega)
        \le
        \bitrans{\gammaone}{\gammatwo}
        \abs{D^a u^i}(U),
        \quad
        (i=1,2,3,\ldots).
    \]
    Since the strict convergence of $\{u^i\}_{i=1}^\infty$ to $u$
    bounds the right hand side, we deduce 
    \[
        \sup_i \abs{D \pf\gammaone u^i}(\Omega)
        +
        \sup_i \abs{D \pf\gammatwo u^i}(\Omega)
         < \infty.
    \]
    We may therefore extract a subsequence, unrelabelled, such that both
    $\{\pf\gammaone u^i\}_{i=1}^\infty$
    and
    $\{\pf\gammatwo u^i\}_{i=1}^\infty$
    are convergent weakly* to some $\uone \in \BVspace(\Omega)$
    and $\utwo \in \BVspace(\Omega)$, respectively.
    Moreover, by lower semicontinuity of the total variation, and strict convergence of the approximating
    sequence
    \begin{equation}
        \notag
        \begin{split}
        \abs{D \uone}(\Omega) 
        +
        \abs{D \utwo}(\Omega) 
        - 2 \abs{Du}(\Omega)
        &
        \le
        \liminf_{i \to \infty} \abs{D \pf\gammaone u^i}(\Omega) + \abs{D \pf\gammatwo u^i}(\Omega) - 2 \abs{Du^i}(\Omega)
        \\
        &
        \le \liminf_{i \to \infty} \bitrans{\gammaone}{\gammatwo} \abs{D u^i}(U).
        \end{split}
    \end{equation}
    Let us pick an open set $U' \supset U$ such that $\abs{Du}(\BD U')=0$.
    Then $\abs{D u^i}(U') \to \abs{D u}(U')$ because $u^i \to u$ strictly in $\Uspace$;
    see \cite[Proposition 1.62]{ambrosio2000fbv}. It follows
    \[
        \abs{D \uone}(\Omega) 
        +
        \abs{D \utwo}(\Omega) 
        - 2 \abs{Du}(\Omega)
        \le
        \bitrans{\gammaone}{\gammatwo} \abs{D u}(U').
    \]
    By taking the intersection over all admissible $U' \supset U$,
    we deduce
    \begin{equation}
        \label{eq:tv-almost-double-lip}
        \abs{D \uone}(\Omega) 
        +
        \abs{D \utwo}(\Omega) 
        - 2 \abs{Du}(\Omega)
        \le
        \bitrans{\gammaone}{\gammatwo} \abs{D u}(\closure U).
    \end{equation}
    This is almost the double-Lipschitz comparability. We just have to
    show that $\uone = \pf\gammaone u$ and $\utwo = \pf\gammatwo u$. 
    Indeed
    \[
        \begin{split}
        \int_\Omega \abs{\uone(x) - \pf\gammaone u(x)} \d x
        &
        \le
        \int_\Omega \abs{\uone(x) - \pf\gammaone u^i(x)} \d x
        +
        \int_\Omega \abs{\pf\gammaone u(x) - \pf\gammaone u^i(x)} \d x
        \\
        &
        \le
        \int_\Omega \abs{\uone(x) - \pf\gammaone u^i(x)} \d x
        +
        C\int_\Omega \abs{u(x) - u^i(x)} \d x,
        \end{split}
    \]
    where
    \[
        C \defeq \sup_x \jacobianf{m}{\gamma}{x}
        \le (\lip \gamma)^m < \infty.
    \]
    But the integrals on the right hand side tend to zero since the
    strict and weak* convergences imply strong convergence in $L^1$
    of $\{u^i\}_{i=1}^\infty$ to $u$ and of $\{\pf\gammaone u^i\}_{i=1}^\infty$
    to $\uone$. It follows that $\uone = \pf\gammaone u$.
    Analogously we show that $\utwo = \pf\gammatwo u$.
    Double-Lipschitz comparability is now immediate from \eqref{eq:tv-almost-double-lip}.
    
    Finally, to see separable double-Lipschitz comparability,
    we proceed analogously as above, but approximate $u$ on
    both sides of $\Gamma$. More specifically, referring to Kirzbraun's
    theorem, we may assume that $V_\Gamma = z^\perp$. Thus $\Omega$
    splits into two domains $\Omega^\pm \defeq \Omega \isect \Gamma^\pm$.
    We approximate $u$ separately on both $\Omega^+$ and $\Omega^-$ 
    by strictly converging sequences of $C^1$  functions
    $\{u^{(+),i}\}_{i=1}^\infty$ and $\{u^{(-),i}\}_{i=1}^\infty$. 
    By the continuity of the trace operator with respect to strict
    convergence in $\BVspace(\Omega)$ (see, e.g., \cite[Theorem 3.88]{ambrosio2000fbv}),
    also $u^i \defeq u^{(+), i} + u^{(-), i}$ then converge strictly 
    to $u$. Moreover, $D^s u^i = D u^i \restrict \Gamma$.
    By \eqref{eq:tv-double-lip-singular} we therefore have
    \[
        \begin{split}
        \abs{D \pf\gammaone u^i}(\Omega)
        &
        +
        \abs{D \pf\gammatwo u^i}(\Omega)
        -2\abs{D u^i}(\Omega)
        \\
        &
        \le
        \bitrans{\gammaone}{\gammatwo}
        \abs{D^a u^i}(U)
        +
        \bigl(
        \abs{D \pf\gammaone u^i}(\gammaone(\Gamma))
        + \abs{D \pf\gammatwo u^i}(\gammatwo(\Gamma))
        -
        2 \abs{D u^i}(\Gamma)
        \bigr).
        \end{split}
    \]
    Since the traces of $u^i$ on $\Gamma$ converge in $L^1(\Gamma)$
    to the trace of $u$ on $\Gamma$, we deduce the separable
    double-Lipschitz property by analogous arguments as 
    the (non-separable) double-Lipschitz property above.
\end{proof}

\begin{remark}
    Observe that we obtained the estimate
    \[
        \TV(\pf\gammaone u) + \TV(\pf\gammatwo u) - 2 \TV(u) \le 
        \bitrans{\gammaone}{\gammatwo} \abs{D u}(\closure U)
    \]
    that involves none of $\didtrans{\gammaone}^2$, $\didtrans{\gammatwo}^2$,
    and $\bitransjac{\gammaone}{\gammatwo}$.
    If we take $\gammatwo(x)=\iota(x) \defeq x$, then the property shows
    \[
        \abs{D \pf\gammaone u}(\Omega) \le \left(1 + \bitrans{\gammaone}{\iota}\right) \abs{Du}(\Omega).
    \]
    This can in specific cases can improve upon standard estimates 
    \cite{ambrosio2000fbv} on the total variation under Lipschitz 
    transformations.

    Observe also that our proof does not depend on the finite-dimensional norm in the definition of $\TV$ being the Euclidean norm. The result holds for any choice of finite-dimensional norm as long as the definition of $\bitrans{\gammaone}{\gammatwo}$ also reflects this.
    In §\ref{sec:lipschitz} we will however depend on the properties of
    the Euclidean norm.
\end{remark}

\subsection{Huber-regularised TV}

For a parameter $\hubergamma>0$, Huber-regularised total variation may be defined as
\begin{equation}
    \label{eq:huber-tv-def}
    \TV_\hubergamma(u)
    \defeq
    \sup\left\{
        \int_\Omega u \divergence \varphi \d x - \frac{\hubergamma}{2}\norm{\varphi}_{L^2(\Omega; \R^m)}^2
        \middle|
            \begin{array}{l}
            \varphi \in C_c^\infty(\Omega; \R^m), \\
            \norm{\varphi}_{2,L^\infty(\Omega; \R^m)} \le 1
            \end{array}
    \right\}.
\end{equation}
If $u$ is smooth, this corresponds to replacing the pointwise $2$-norm by
\[
    \huber[\hubergamma]{g} = 
    \begin{cases}
        \norm{g}_2 - \frac{1}{2\hubergamma}, & \norm{g}_2 \ge 1/\hubergamma,
        \\
        \frac{\hubergamma}{2}\norm{g}_2^2, & \norm{g}_2 < 1/\hubergamma,
    \end{cases}
\]
giving
\[
    \TV_\hubergamma(u) = \int_\Omega \huber[\hubergamma]{\grad u(x)} \d x,
    \quad
    (C^1(\Omega)).
\]
Huber-regularisation of TV is sometimes helpful numerically,
especially in the context of second-order optimisation methods
\cite{hintermuller2006infeasible},
as well as primal-dual methods for non-convex problems
\cite{tuomov-nlpdhgm}.
It also helps to avoid the stair-casing effect to some extent.
As with higher-order regularisers, there is no apparent useful 
coarea formula for Huber-TV, that would allow us to show regularity
properties through level sets. However, $\TV_\hubergamma$ satisfies
our assumptions, as stated in the following. In fact, since our
proof is based on rather general properties, we consider
a slightly larger class of functionals, based on a class of 
convex energies.

\begin{definition}
    We denote by $\Psi$ the family of increasing convex functions
    $\psi: [0, \infty) \to [0, \infty)$ with $\psi(0)=0$, that
    satisfy the following two properties. Firstly, 
    $0 < \psi^\infty < \infty$ for
    \[
        \psi^\infty \defeq \lim_{t \upto \infty} \psi(t)/t.
    \]
    Secondly, $\psi$ satisfies for some constants $K_\psi, C_\psi>0$ the property
    \begin{equation}
        \label{eq:limited-coco}
        \iprod{\subdiff \psi(t) - \subdiff \psi(s)}{t-s}
        \le 
        \begin{cases}
            0, & \min\{t, s\} \in [K_\psi, \infty), \\
            C_\psi \abs{t-s}^2, & \min\{t, s\} \in [0, K_\psi).
        \end{cases}
    \end{equation}
\end{definition}


In essence, the threshold $K_\psi$ says that $\TV_\psi$, defined after
the next lemma, behaves linearly like $\TV$ for large gradients.
This of course holds for Huber-$\TV$.

\begin{lemma}
    We have $\huber[\hubergamma]{\freevar} \in \Psi$, ($\eta > 0$).
\end{lemma}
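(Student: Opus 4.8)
The plan is to work entirely with the scalar profile of the Huber function, namely $\psi(t) \defeq \huber[\hubergamma]{t}$ for $t \ge 0$, which, since $\norm{t}_2 = t$ for a scalar $t \ge 0$, is explicitly
\[
    \psi(t) = \begin{cases} \frac{\hubergamma}{2}t^2, & 0 \le t < 1/\hubergamma, \\ t - \frac{1}{2\hubergamma}, & t \ge 1/\hubergamma. \end{cases}
\]
First I would record the elementary facts needed for membership in $\Psi$: $\psi(0) = 0$, $\psi$ is increasing and convex, and --- the point that makes the rest painless --- $\psi \in C^1([0,\infty))$ with
\[
    \psi'(t) = \min\{\hubergamma t,\, 1\}, \quad (t \ge 0),
\]
the two one-sided derivatives at the break point $t = 1/\hubergamma$ both equalling $1$. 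From the second branch, $\psi^\infty = \lim_{t \upto \infty} \psi(t)/t = \lim_{t \upto \infty}\bigl(1 - \tfrac{1}{2\hubergamma t}\bigr) = 1$, so $0 < \psi^\infty < \infty$.

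For the limited co-coercivity \eqref{eq:limited-coco}, I would exploit differentiability: $\subdiff\psi(t) = \{\psi'(t)\}$, so the left-hand side is just $(\psi'(t) - \psi'(s))(t - s)$. Take $K_\psi \defeq 1/\hubergamma$. In the first case, $\min\{t,s\} \ge 1/\hubergamma$ forces $t, s \ge 1/\hubergamma$, hence $\psi'(t) = \psi'(s) = 1$ and the product is $0 \le 0$. For the second case, I would use that $t \mapsto \min\{\hubergamma t, 1\}$ is $\hubergamma$-Lipschitz on $[0,\infty)$ (it is the composition of the $1$-Lipschitz map $x \mapsto \min\{x,1\}$ with $t \mapsto \hubergamma t$), so
\[
    (\psi'(t) - \psi'(s))(t - s) \le \abs{\psi'(t) - \psi'(s)}\,\abs{t - s} \le \hubergamma \abs{t - s}^2,
\]
and $C_\psi \defeq \hubergamma$ does the job --- indeed this bound holds for all $t, s \ge 0$, a fortiori when $\min\{t,s\} < K_\psi$.

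There is no genuine obstacle here; the statement is a direct verification, and the one point worth isolating is conceptual rather than technical: the break point $1/\hubergamma$ of the Huber profile is exactly the threshold $K_\psi$ beyond which $\psi$ is affine (so that convexity forces the first case to hold with equality to zero), while the curvature $\hubergamma$ of the quadratic branch is precisely the constant $C_\psi$ controlling the co-coercivity defect on $[0, K_\psi)$. If one preferred to avoid the explicit formula for $\psi'$, the estimate $0 \le \psi'' \le \hubergamma$ a.e.\ together with the fundamental theorem of calculus yields the same Lipschitz bound.
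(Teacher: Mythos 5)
Your proof is correct and takes essentially the same route as the paper: both identify $K_\psi = 1/\hubergamma$ and $C_\psi = \hubergamma$ and split at the break point, the only cosmetic difference being that you absorb the mixed case $s < 1/\hubergamma < t$ into the global $\hubergamma$-Lipschitz bound on $\psi'$, whereas the paper computes that case directly. Your explicit verification of $\psi^\infty = 1$ and the other membership conditions is a harmless addition the paper dismisses as trivial.
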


\begin{proof}
    Only \eqref{eq:limited-coco} demands proof.
    We set $\psi(t) \defeq \huber[\hubergamma]{t}$.
    Clearly, if $s, t \ge 1/\hubergamma$, we have
    \[
        \iprod{\subdiff \psi(t) - \subdiff \psi(s)}{t-s}=0,
    \]
    so the property holds.
    If $s, t < 1/\hubergamma$, we have
    \[
        \iprod{\subdiff \psi(t) - \subdiff \psi(s)}{t-s}
        = \hubergamma\abs{t-s}^2,
    \]
    so we have \eqref{eq:limited-coco} in this range.
    In the case $s < 1/\hubergamma < t$, we also have
    \[
        \iprod{\subdiff \psi(t) - \subdiff \psi(s)}{t-s}
        =(1 - \hubergamma s)(t-s)
        \le
        (\hubergamma t - \hubergamma s)(t-s)
        \le \hubergamma \abs{t-s}^2.
    \]
    The property \eqref{eq:limited-coco} follows
    with $K_{\psi} = 1/\hubergamma$
    and $C_{\psi} = \hubergamma$.
\end{proof}

\begin{definition}
    Let $\psi \in \Psi$. Then we set
    \[
        \TV_\psi(u) \defeq \int_\Omega \psi(\norm{\grad u(x)}) \d x
        +
        \psi^\infty \abs{D^s u}(\Omega),
        \quad
        (u \in \BVUspace).
    \]
\end{definition}

\begin{remark}
    It can be shown that
    \begin{equation}
        \label{eq:psi-tv-def}
        \TV_\psi(u)
        =
        \sup\left\{
            \int_\Omega u(x) \divergence \varphi(x) \d x - \int_\Omega \psi^*(\norm{\varphi(x)}) \d x 
            \Bigm|
                \varphi \in C_c^\infty(\Omega; \R^m)
        \right\}.
    \end{equation}
\end{remark}

We now claim that $\TV_\psi$ satisfies the requirements
of our main results, Theorem \ref{theorem:jumpset-strict} 
and Theorem \ref{theorem:jumpset-l1}.

\begin{proposition}
    \label{prop:huber-admis}
    Let $\Omega \subset \R^m$ be a bounded domain with Lipschitz boundary
    and $\psi \in \Psi$.
    Then $\TV_\psi$ is an admissible (separably) double-Lipschitz 
    comparable regularisation functional on $\Uspace$.
    Moreover, $\norm{u}_{L^\infty(\Omega)} \le \norm{f}_{L^\infty(\Omega)}$
    for solutions $u$ to \eqref{eq:prob}.
\end{proposition}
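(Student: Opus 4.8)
The plan is to reduce the proposition to establishing (separable) double-Lipschitz comparability, since the remaining requirements of Definition \ref{def:admissible} and the $L^\infty$ bound follow by minor adaptations of the $\TV$ case. For admissibility, $\TV_\psi$ is convex and weak$^*$-lower semicontinuous on $\BVUspace$ because the dual representation \eqref{eq:psi-tv-def} exhibits it as a supremum of functionals affine and continuous along strong $L^1$ (hence weak$^*$) convergence; and since $\psi\in\Psi$ is convex with $\psi(t)/t\to\psi^\infty\in(0,\infty)$ and, by \eqref{eq:limited-coco}, affine on $[K_\psi,\infty)$, its asymptote $t\mapsto\psi^\infty t+b$ with $b:=\lim_{t\to\infty}(\psi(t)-\psi^\infty t)\in(-\infty,0]$ is a supporting line, so $\TV_\psi(u)\ge\psi^\infty\abs{Du}(\Omega)+b\L^m(\Omega)$, which gives \eqref{eq:r-tv-bound} as $\Omega$ is bounded. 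For $\norm{u}_{L^\infty(\Omega)}\le M:=\norm{f}_{L^\infty(\Omega)}$ I would run the usual barrier argument against the truncation $\tilde u:=\min\{\max\{-M,u\},M\}$, replacing the appeal to the coarea formula by the chain rule in $\BVspace$: a $1$-Lipschitz truncation $T$ has $\abs{T'}\le1$ a.e.\ and $\abs{T(a)-T(b)}\le\abs{a-b}$, so (as $\psi$ is increasing with $\psi(0)=0$) $\psi(\norm{\grad\tilde u})\le\psi(\norm{\grad u})$ pointwise and $\abs{D^s\tilde u}\le\abs{D^s u}$ as measures, whence $\TV_\psi(\tilde u)\le\TV_\psi(u)$, while the fidelity does not increase (and strictly decreases on $\{\abs u>M\}$), contradicting optimality.

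For double-Lipschitz comparability I would, as in Proposition \ref{prop:tv-admissible}, split $\TV_\psi(\pf\gamma u)=\int_\Omega\psi(\norm{\grad\pf\gamma u})\d x+\psi^\infty\abs{D^s\pf\gamma u}(\Omega)$, transform the absolutely continuous part by Lemma \ref{lemma:da-gammau} and the area formula \eqref{eq:areaformula} into $\int_\Omega\jacobianf{m}{\gamma}{y}\,\psi\bigl(\norm{\grad\inv\gamma(\gamma(y))\grad u(y)}\bigr)\d y$ (the matrix--Jacobian product inside being $\lipjac\gamma(y)$ by \eqref{eq:lipjac}), observe this integrand equals $\psi(\norm{\grad u})$ off $U$ and hence cancels there, and estimate the integrand difference pointwise on $U$. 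Fixing $y\in U$, $\xi:=\grad u(y)$, $t_0:=\norm\xi$, and, for $i\in\{1,2\}$ with $\gamma_1:=\gammaone$, $\gamma_2:=\gammatwo$, $t_i:=\norm{\grad\inv{\gamma_i}(\gamma_i(y))\xi}$ and $J_i:=\jacobianf{m}{\gamma_i}{y}$, so that $J_it_i=\norm{\lipjac{\gamma_i}(y)\xi}$ and $\abs{t_i-t_0}\le\didtrans{\gamma_i}t_0$, a first-order Taylor expansion of $\psi$ (which, by \eqref{eq:limited-coco}, is $C^1$) at $t_0$ yields
\[
\sum_{i=1}^2 J_i\psi(t_i)-2\psi(t_0)
=\psi(t_0)(J_1+J_2-2)
+\psi'(t_0)\Bigl(\sum_{i=1}^2\norm{\lipjac{\gamma_i}(y)\xi}-(J_1+J_2)t_0\Bigr)
+\sum_{i=1}^2 J_i r_i ,
\]
with $0\le r_i\le\tfrac{C_\psi}{2}(t_i-t_0)^2$ (convexity and $\psi''\le C_\psi$ on $[0,K_\psi)$) and $r_i=0$ once $\min\{t_0,t_i\}\ge K_\psi$. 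Three bounds close the pointwise estimate: $\psi(0)=0$ gives $0\le\psi(t_0)\le\psi^\infty t_0$, so the first term is $\le\psi^\infty\bitransjac{\gammaone}{\gammatwo}t_0$; $\psi'\ge0$ together with the definition of $\bitrans{\gammaone}{\gammatwo}$ (at $y$, $v=\xi/t_0$) and $\abs{J_1+J_2-2}\le\bitransjac{\gammaone}{\gammatwo}$ makes the bracket $\le(\bitrans{\gammaone}{\gammatwo}+\bitransjac{\gammaone}{\gammatwo})t_0$, so the second term is $\le\psi^\infty(\bitrans{\gammaone}{\gammatwo}+\bitransjac{\gammaone}{\gammatwo})t_0$ when that bracket is positive and $\le0$ otherwise; and $\bitransfull{\gammaone}{\gammatwo}<1$ forces $0<J_i<3$, while $r_i$ is nonzero only where $t_0\le K_\psi$ (then $r_i\le\tfrac{C_\psi}{2}\didtrans{\gamma_i}^2 K_\psi t_0$) or where $t_i<K_\psi\le t_0$ (then $r_i=\psi(t_i)-\psi^\infty t_i-b=O((K_\psi-t_i)^2)=O(\didtrans{\gamma_i}^2 K_\psi^2)$, since $\psi$ is $C^1$ with $\psi'(K_\psi)=\psi^\infty$, on a set of measure $\le K_\psi^{-1}\abs{D^a u}(U)$); either way $\sum_i\int_U J_i r_i\le C(\didtrans{\gammaone}^2+\didtrans{\gammatwo}^2)\abs{D^a u}(U)$. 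Summing, the integrand difference is $\le C\bitransfull{\gammaone}{\gammatwo}\norm{\grad u(y)}$, so integration over $U$ gives $\le C\bitransfull{\gammaone}{\gammatwo}\abs{D^a u}(U)$; the singular terms $\psi^\infty\bigl(\abs{D^s\pf\gammaone u}(U)+\abs{D^s\pf\gammatwo u}(U)-2\abs{D^s u}(U)\bigr)$ are then absorbed exactly as for $\TV$ in Proposition \ref{prop:tv-admissible}, by running the estimate on a strictly convergent sequence of $C^1$ approximations of $u$ (respectively, of the one-sided traces across $\Gamma$ for the separable statement), where the singular part vanishes, and passing to the limit via weak$^*$ lower semicontinuity of $\TV_\psi$ and continuity of the total variation under strict convergence.

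The main obstacle is the bookkeeping in that pointwise estimate: a naive $\abs{\psi(t_i)-\psi(t_0)}\le\psi^\infty\didtrans{\gamma_i}t_0$ would produce only $\sqrt{\bitransfull{\gammaone}{\gammatwo}}$, which is too weak for the intended application to the shift transformations of \S\ref{sec:lipschitz}, where $\bitransfull{\gammaone}{\gammatwo}=O(\rho^2)$ but $\didtrans{\gammaone},\didtrans{\gammatwo}=O(\rho)$. What rescues the argument are precisely the two defining features of $\Psi$: that $\psi$ is \emph{increasing}, whose nonnegative first-order Taylor coefficient lets the possibly $O(\didtrans{\gamma_i})$-sized first-order contributions of the two transformations only help the inequality unless they already combine into the controlled quantity $\bitrans{\gammaone}{\gammatwo}+\bitransjac{\gammaone}{\gammatwo}$; and that $\psi$ is \emph{affine beyond $K_\psi$} (and $C^1$ there), so the genuinely second-order remainder is localised to a region on which it integrates against $O(\didtrans{\gamma_i}^2)\abs{D^a u}(U)$. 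Pushing the singular term to the limit is where the boundedness of $\Omega$ and its Lipschitz boundary enter (density of smooth functions under strict convergence and the trace theorem), exactly as for $\TV$.
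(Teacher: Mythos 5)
Your proposal is correct and its overall architecture coincides with the paper's: weak* lower semicontinuity and coercivity from the dual representation \eqref{eq:psi-tv-def} and the linear growth of $\psi$ beyond $K_\psi$; the $L^\infty$ bound via truncation and the chain rule in $\BVspace$; the comparability estimate via Lemma \ref{lemma:da-gammau}, the area formula, a pointwise inequality for the integrand, and then the same smooth-approximation/strict-convergence machinery as in Proposition \ref{prop:tv-admissible} to handle the singular part. The one place where you genuinely diverge is the pointwise inequality itself. The paper isolates it as Lemma \ref{lemma:huber-biapprox} and proves it by subdifferential inequalities anchored at the \emph{transformed} points $\norm{Av}$, $\norm{Bv}$, followed by a three-way case analysis on whether these lie above or below $\norm{v}$, with \eqref{eq:limited-coco} entering through the monotonicity product $\iprod{z_B-z_A}{\norm{Bv}-\norm{Av}}$. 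You instead Taylor-expand $\psi$ at the base point $t_0=\norm{\grad u}$ (using that \eqref{eq:limited-coco} forces $\psi\in C^1$ with a Lipschitz derivative below $K_\psi$ and affineness above it), so that the zeroth-, first- and second-order terms are separately matched to $\bitransjac{\gammaone}{\gammatwo}$, $\bitrans{\gammaone}{\gammatwo}+\bitransjac{\gammaone}{\gammatwo}$ and $\didtrans{\gammaone}^2+\didtrans{\gammatwo}^2$. Both derivations ultimately rest on the same two features of $\Psi$ — monotonicity killing the dangerous $O(\didtrans{\gamma})$ first-order contributions unless they recombine into $\bitrans{\gammaone}{\gammatwo}$, and \eqref{eq:limited-coco} plus the bound $\bitransfull{\gammaone}{\gammatwo}<1$ confining the quadratic remainder to the regime $t\lesssim K_\psi/(1-M)$ where $t^2\le Ct$ — and you identify this mechanism explicitly, which is exactly the content of the remark following Lemma \ref{lemma:huber-biapprox}. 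Your expansion is arguably more transparent about where each comparison constant is consumed; the paper's subdifferential version avoids any regularity discussion of $\psi$ and states the inequality as a reusable lemma (it is recycled in the non-convex remarks of \S\ref{sec:illposed}). Your handling of the case $t_i<K_\psi\le t_0$ is phrased somewhat loosely (the ``set of measure'' aside is not needed), but the bound $t_0\le K_\psi/(1-\didtrans{\gamma_i})$ that rescues it is precisely the paper's final estimate, so no gap results.
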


For the proof, we require the estimate from the following lemma.

\begin{lemma}
    \label{lemma:huber-biapprox}
    Let $\psi \in \Psi$ and $A, B: \R^m \to \R^m$ be linear transformations,
    and $c, d> 0$. Denote
    \[
        \begin{aligned}
        \tilde G_{A, B} & \defeq \sup_{\norm{v}=1} \norm{Av}+\norm{Bv}-2\norm{v}, \\ 
        \tilde J_{c, d} & \defeq \abs{c+d-2}, \quad\text{and}\\
        \tilde D_A & \defeq \norm{A-I}.
        \end{aligned}
    \]
    If $\tilde J_{c, d}, \tilde D_A, \tilde D_B \le M$ for some constant $M \in (0, 1)$,
    then there exists a constant $C=C(M, K_\psi, \psi^\infty)$ such that
    \begin{equation}
        \label{eq:huber-biapprox}
        c\psi(\norm{Av}) + d\psi(\norm{Bv}) - 2\psi(\norm{v})
        \le C(\tilde G_{cA,dB} 
            + \tilde J_{c,d} 
            + \tilde D_A^2 + \tilde D_B^2)\norm{v},
    \end{equation}
    for all $v \in \R^m$.
\end{lemma}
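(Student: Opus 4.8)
The plan is to reduce \eqref{eq:huber-biapprox} to a one‑dimensional estimate on the three scalars $a \defeq \norm{Av}$, $b \defeq \norm{Bv}$ and $t \defeq \norm{v}$. From the hypotheses I extract: since $\tilde D_A,\tilde D_B < 1$, the bounds $\abs{a-t}\le\tilde D_A t$ and $\abs{b-t}\le\tilde D_B t$, hence $(1-M)t \le a,b \le (1+M)t$; since $c,d>0$ and $\abs{c+d-2}=\tilde J_{c,d}\le M$, the bounds $c,d \le 2+M$; and, evaluating the defining supremum of $\tilde G_{cA,dB}$ at $v/\norm{v}$, the inequality $ca+db-2t \le \tilde G_{cA,dB}\,t$. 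On the side of $\psi$ I would use only two elementary facts valid for every $\psi\in\Psi$: since $\psi(0)=0$ and $\psi$ is convex, $s\mapsto\psi(s)/s$ is nondecreasing with limit $\psi^\infty$, so $\psi(s)\le\psi^\infty s$ and $0 \le \subdiff\psi(s)\le\psi^\infty$ for all $s\ge 0$; and \eqref{eq:limited-coco} yields both $(\subdiff\psi(r)-\subdiff\psi(s))(r-s) \le C_\psi\abs{r-s}^2$ for all $r,s\ge 0$ and $\subdiff\psi\equiv\psi^\infty$ on $[K_\psi,\infty)$ (the latter because there the left‑hand side of \eqref{eq:limited-coco} is both $\le 0$ and, by monotonicity of $\subdiff\psi$, $\ge 0$).

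I would start from the subgradient inequality $\psi(t)\ge\psi(s)+\subdiff\psi(s)(t-s)$, i.e.\ $\psi(s)-\psi(t)\le\subdiff\psi(s)(s-t)$, applied at $s=a$ and at $s=b$, collecting the coefficient mismatch into a separate term:
\[
  c\psi(a)+d\psi(b)-2\psi(t) \le c\,\subdiff\psi(a)(a-t)+d\,\subdiff\psi(b)(b-t)+(c+d-2)\psi(t).
\]
The last summand is handled at once by $\psi(t)\le\psi^\infty t$, giving $(c+d-2)\psi(t)\le\psi^\infty\tilde J_{c,d}\,t$. For the first two summands I insert $\subdiff\psi(t)$: $c\,\subdiff\psi(a)(a-t)=c\,\subdiff\psi(t)(a-t)+c(\subdiff\psi(a)-\subdiff\psi(t))(a-t)$, and likewise at $b$. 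The ``cocoercivity remainders'' $c(\subdiff\psi(a)-\subdiff\psi(t))(a-t)$ etc.\ are bounded by \eqref{eq:limited-coco}: each is $\le C_\psi\abs{a-t}^2\le C_\psi\tilde D_A^2 t^2$, and is moreover $\le 0$ unless $\min\{a,t\}<K_\psi$; since $a\ge(1-M)t$, that exceptional case forces $t<K_\psi/(1-M)$, so in every case the remainder is $\le \tfrac{C_\psi K_\psi}{1-M}\,\tilde D_A^2\,t$. Using $c,d\le 2+M$, the sum of the two remainders is $\le C_1(\tilde D_A^2+\tilde D_B^2)\,t$ with $C_1=C_1(M,K_\psi,C_\psi)$ — precisely the second‑order terms in \eqref{eq:huber-biapprox}.

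What is left is the ``linearised'' term $\subdiff\psi(t)\bigl(c(a-t)+d(b-t)\bigr)=\subdiff\psi(t)(ca+db-2t)-\subdiff\psi(t)(c+d-2)t$, whose second piece is $\le\psi^\infty\tilde J_{c,d}\,t$ since $0\le\subdiff\psi(t)\le\psi^\infty$. For the first piece I would split on the magnitude of $t$. If $t\ge K_\psi/(1-M)$, then $a,b,t\ge K_\psi$, so $\subdiff\psi(t)=\psi^\infty$ (and, as noted, the cocoercivity remainders vanish), and multiplying $ca+db-2t\le\tilde G_{cA,dB}\,t$ through by the \emph{positive} constant $\psi^\infty$ gives $\subdiff\psi(t)(ca+db-2t)\le\psi^\infty\tilde G_{cA,dB}\,t$ with no sign subtlety. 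If instead $t<K_\psi/(1-M)$, then $t$ — and hence $a,b$ — is bounded by a constant depending only on $M,K_\psi$; here $0\le\subdiff\psi(t)\le\psi^\infty$ and $ca+db-2t\le\tilde G_{cA,dB}\,t$ give $\subdiff\psi(t)(ca+db-2t)\le\psi^\infty\max\{\tilde G_{cA,dB},0\}\,t$, and, $t$ being bounded, the remaining terms of the left‑hand side in this regime are again $O\bigl(\tilde D_A^2+\tilde D_B^2+\tilde J_{c,d}\bigr)\,t$ via the cocoercivity step. Collecting all pieces yields \eqref{eq:huber-biapprox} with $C=C(M,K_\psi,\psi^\infty,C_\psi)$.

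The main obstacle is the bookkeeping that keeps the two $\psi$‑regimes cleanly separated: where the argument of $\psi$ is large the function is already affine ($\subdiff\psi\equiv\psi^\infty$), and there the estimate must reproduce \emph{exactly} the first‑order‑admissible quantities $\tilde G_{cA,dB}$ and $\tilde J_{c,d}$ — which is possible only because scaling the defining inequality of $\tilde G_{cA,dB}$ by $\psi^\infty>0$ is sign‑safe; where the argument of $\psi$ is small, $\psi$ is genuinely nonlinear, but then the arguments are bounded and the limited cocoercivity \eqref{eq:limited-coco} converts the first‑order perturbation $\abs{a-t}\le\tilde D_A t$ into the quadratic $\tilde D_A^2 t$ that the statement permits. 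Correctly handling the signs in this small‑$\norm{v}$ regime, where $\tilde G_{cA,dB}$ need not be nonnegative, is the delicate point that this case split is designed to navigate.
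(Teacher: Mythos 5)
Your proof is correct and rests on the same three ingredients as the paper's: the subgradient inequality to peel off the term $(c+d-2)\psi(\norm{v})\le\psi^\infty\tilde J_{c,d}\norm{v}$, the limited cocoercivity \eqref{eq:limited-coco} to absorb the nonlinear discrepancy into $\tilde D_A^2+\tilde D_B^2$, and the lower bound $\norm{Av},\norm{Bv}\ge(1-M)\norm{v}$ to convert the resulting $\norm{v}^2$ into $\tfrac{K_\psi}{1-M}\norm{v}$. The organisation differs: the paper distinguishes three cases by the sign pattern of $(\norm{Av}-\norm{v},\norm{Bv}-\norm{v})$ and, in the mixed case, recentres at $z_A\in\subdiff\psi(\norm{Av})$, dropping one cross term by monotonicity of $\subdiff\psi$; you instead always recentre at $\subdiff\psi(\norm{v})$ and split on $\norm{v}\gtrless K_\psi/(1-M)$, which treats $A$ and $B$ symmetrically and replaces the sign case analysis by a size case analysis. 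You also make explicit the fact (used only implicitly in the paper) that \eqref{eq:limited-coco} together with monotonicity forces $\subdiff\psi\equiv\psi^\infty$ above $K_\psi$, which is exactly why the large-$\norm{v}$ regime is sign-safe. Two caveats you share with the paper's own proof, so they are not gaps in your argument relative to it: the constant in fact also depends on $C_\psi$ from \eqref{eq:limited-coco} (you state this dependence, the lemma's statement omits it), and when $\tilde G_{cA,dB}<0$ both arguments really only deliver the bound with $\max\{\tilde G_{cA,dB},0\}$ in place of $\tilde G_{cA,dB}$ --- you flag this explicitly where the paper glosses over it; it is harmless downstream, where the lemma is applied pointwise and then majorised by the supremum $\bitrans{\gammaone}{\gammatwo}$, which is nonnegative for the transformations actually used.
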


\begin{proof}
    Let us first of all observe directly from the definition of 
    the subdifferential, and $\psi$ being increasing that
    \begin{equation}
        \label{eq:psi-subdiff-bnd}
        0 < \sup_{t} \norm{\subdiff \psi(t)} \le \psi^\infty,
    \end{equation}
    From convexity also
    \[
        \psi(s) \le \psi(0) + t\frac{\psi(t/\lambda)}{t/\lambda},
        \quad (\lambda \in (0, 1]). 
    \]
    Letting $\lambda \downto 0$, and recalling $\psi(0)=0$, we deduce
    \[
        \psi(s) \le \psi^\infty s, \quad (s\ge 0).
    \]
    
    Let us define
    \[
        L \defeq c\psi(\norm{Av}) + d\psi(\norm{Bv}) - 2\psi(\norm{v}).
    \]
    We want to bound $L$.
    Using the definition of the subdifferential, we have
    \begin{equation}
        \label{eq:huber-energy-first-approx}
        L  \le c\subdiff \psi(\norm{Av})(\norm{Av}-\norm{v})
            + d\subdiff \psi(\norm{Bv})(\norm{Bv}-\norm{v})
            + (c+d-2) \psi(\norm{v}).
    \end{equation}

    If $\norm{Av}, \norm{Bv} \ge \norm{v}$, we immediately deduce
    using \eqref{eq:psi-subdiff-bnd} that
    \[
        \begin{split}
        L &
        \le \psi^\infty\left(c\norm{Av}+d\norm{Bv}-2\norm{v}\right)
        +
        (c+d-2)\bigl(\psi(\norm{v}) - \psi^\infty\norm{v}\bigr)
        \\
        &
        \le \psi^\infty \bigl( \tilde G_{cA,dB}\norm{v} + \tilde J_{c,d} \norm{v}\bigr).
        \end{split}
    \]
    This is what we need.

    If $\norm{Av}, \norm{Bv} \le \norm{v}$, we deduce
    $L \le \psi^\infty \tilde J_{c,d} \norm{v}$.
    Again the claim holds.

    It remains to consider the case $\norm{Av} \ge \norm{v} > \norm{Bv}$,
    the case with $A$ and $B$ exchanged being analogous. We now
    use \eqref{eq:limited-coco} as follows. We
    pick $z_B \in \subdiff \psi(\norm{Bv})$
    and $z_A \in \subdiff \psi(\norm{Av})$, and define
    \[
        \hat\chi \defeq 1-\chi_{[K_\psi, \infty)}(\norm{Bv})
        = 1-\chi_{[K_\psi, \infty)}(\norm{Av})\chi_{[K_\psi, \infty)}(\norm{Bv}).
    \]
    Then
    \[
        \begin{split}
            z_B (\norm{Bv}-\norm{v})
            &
            =
            \bigl(z_B -z_A\bigr)\bigl(\norm{Bv}-\norm{v}\bigr)
            + z_A \bigl(\norm{Bv}-\norm{v}\bigr)
            \\
            &
            =
            \bigl(z_B -z_A\bigr)\bigl(\norm{Bv}-\norm{Av}\bigr)
            \\
            &
            \phantom{=}
            +
            \bigl(z_B - z_A\bigr)\bigl(\norm{Av}-\norm{v}\bigr)
            + z_A \bigl(\norm{Bv}-\norm{v}\bigr)
            \\
            &
            \le
            \hat\chi C_\psi\bigl(\norm{Bv}-\norm{Av}\bigr)^2
            + z_A \bigl(\norm{Bv}-\norm{v}\bigr).
        \end{split}
    \]
    In the final step we have used \eqref{eq:limited-coco},
    $\norm{Av}-\norm{v}\ge 0$, and the fact that $z_B - z_A \le 0$ 
    which follows from the monotonicity of $\subdiff \psi$.
    Thus, continuing from
    \eqref{eq:huber-energy-first-approx}, we calculate
    \[
        \begin{split}
            L 
            &
            \le
            c z_A (\norm{Av}-\norm{v})
            + d z_B (\norm{Bv}-\norm{v})
            + (c + d - 2)\psi(\norm{v})
            \\
            &
            \le
            cz_A(\norm{Av}-\norm{v})
            +
            dz_A(\norm{Bv}-\norm{v})
            + (c + d - 2)\psi(\norm{v})
            \\ & \phantom{=}
            +
            d\hat\chi C_\psi\bigl(\norm{Bv}-\norm{Av}\bigr)^2
            \\
            &
            \le
            z_A (\norm{cAv}+\norm{dBv}-2\norm{v})
            +
            (c + d - 2)\bigl(\psi(\norm{v})-z_A\norm{v}\bigr)
            \\ & \phantom{=}
            +
            2d\hat\chi C_\psi\bigl((\norm{Av}-\norm{v})^2+(\norm{Bv}-\norm{v})^2\bigr)
            \\
            &
            \le
            \psi^\infty\bigl(\tilde G_{cA,dB} + \tilde J_{c,d}\bigr)\norm{v}
            +
            2(2+M)\hat\chi C_\psi (\tilde D_A^2 + \tilde D_B^2) \norm{v}^2.
        \end{split}
    \]
    In the final step, we have used $d \le 2+M$, which follows from the bound $\tilde J_{c,d} \le M$ and $c>0$.
    Finally, we observe from
    \[
        \norm{Bv} \ge \norm{v} - \norm{Bv-v} \ge (1-M)\norm{v}
    \]
    that
    \[
        \hat \chi \norm{v}
        \le
        \frac{1}{1-M}\hat \chi \norm{Bv}
        \le
        \frac{K_\psi}{1-M}.
    \]
    This proves the claim.
\end{proof}

\begin{remark}
    We only needed \eqref{eq:limited-coco} and the bound $M$ on
    $\tilde D_A$ and $\tilde D_B$ for the estimate 
    $\norm{v}^2 \le C \norm{v}$ below $K_\psi$.
    The bound $M$ on $\tilde J_{c,d}$ was used to get rid of $d$ in the final steps.
\end{remark}

\begin{proof}[Proof of Proposition \ref{prop:huber-admis}]
    Weak* lower semicontinuity is immediate from the formulation \eqref{eq:psi-tv-def}.
    To see \eqref{eq:r-tv-bound}, we observe for some constants $c,r>0$ that
    \[
        \psi^*(t) \le c + \delta_{[0, r)}(t).
    \]
    Indeed, for $t,s > K_\psi$, \eqref{eq:limited-coco} implies
    $\subdiff \psi(s)=\subdiff \psi(t)=\{r\}$ for some constant $r>0$.
    Approximating
    \[
        \psi^*(t)=\sup_{s \ge 0}(st - \psi(s))
                \le \sup_{s \ge 0}(st - r(s-2K_\psi) - \psi(2K_\psi))
    \]
    and using $\psi(2K_\psi)<\infty$ now gives the above bound.
    Let us then pick $\varphi \in C_c^\infty(\Omega; \R^m)$
    with $\norm{\phi}_{2,L^\infty(\Omega)} \le r$. We estimate
    \[
        \int_\Omega u \divergence \varphi \d x - \int_\Omega \psi^*(\norm{\varphi(x)}) \d x
        \ge
        \int_\Omega u \divergence \varphi \d x - c\L^m(\Omega).
    \]
    It follows
    \[
        \inv r \TV_\psi(\Omega) \ge \abs{Du}(\Omega) - c\L^m(\Omega).
    \]
    This shows \eqref{eq:r-tv-bound} and that the assumptions of
    Definition \ref{def:admissible} hold.
    
    The proof of the assumptions of Definition \ref{def:lipschitz-trans} 
    is analogous to Proposition \ref{prop:tv-admissible}. 
    Let $\gamma \in \LipClass(\Omega, U)$ for some open $U \subset \Omega$.
    The singular part $\abs{D^s \pf\gamma u}(\Omega)$ is unaltered
    in \eqref{eq:tv-sep}. In place of 
    the absolutely continuous part $\abs{D^a \pf\gamma u}(\Omega)$, 
    we have $\int_\Omega \psi(\norm{\grad \pf\gamma u}) \d x$.
    Using the area formula, we calculate
    \[
        \int_\Omega \psi(\norm{\grad \pf\gamma u(x)}) \d x
        =
        \int_U \psi(\norm{\grad \inv \gamma(\gamma(x)) \grad u(x)}) \jacobianf{m}{\gamma}{x} \d x.
    \]
    Let now $\gammaone,\gammatwo \in \LipClass(\Omega, U)$
    with $\bitransfull{\gammaone}{\gammatwo} < 1$.
    Summing the previous equation for $\gamma=\gammaone,\gammatwo$,
    subtracting $2\int_\Omega \psi(\norm{\grad u(x)}) \d x$ and
    using Lemma \ref{lemma:huber-biapprox} with
    $A=\grad \inv \gammaone(\gamma(x))$,
    $B=\grad \inv \gammatwo(\gamma(x))$,
    $c=\jacobianf{m}{\gammaone}{x}$,
    $d=\jacobianf{m}{\gammatwo}{x}$,
    and $v=\grad u(x)$ yields
    \[
        \begin{split}
        \int_\Omega \psi(\norm{\grad \pf\gammaone u}) \d x
        &
        +
        \int_\Omega \psi(\norm{\grad \pf\gammatwo u}) \d x
        - 2 \int_\Omega \psi(\norm{\grad u(x)}) \d x
        \\
        &
        \le
        C(\bitrans{\gammaone}{\gammatwo} + \bitransjac{\gammaone}{\gammatwo} + \didtrans{\gammaone}^2 + \didtrans{\gammatwo}^2)
        \int_U \norm{\grad u(x)} \d x.
        \end{split}
    \]
    The various elements in the sum on the right hand side are defined in \eqref{eq:bitrans-components}.
    With this estimate at hand, the rest follows exactly as in Proposition \ref{prop:tv-admissible}.
    
    Finally, to see, $\norm{u}_{L^\infty(\Omega)} \le \norm{f}_{L^\infty(\Omega)}$
    for solutions $u$ to \eqref{eq:prob}, we follow the corresponding proof
    for $\TV$. Namely, if we set $L \defeq \norm{f}_{L^\infty(\Omega)}$,
    and replace $u$ by
    \[
        \bar u(x) \defeq \max\{-L, \min\{u(x), L\}\},
    \]
    then it follows from the chain rule in BV
    \cite[Theorem 3.96]{ambrosio2000fbv} that
    \[
        \abs{D\bar u}(A) \le \abs{Du}(A)
        \quad
        \text{for any Borel set } A.
    \]
    From this it is immediate that
    \[
        \TV_\psi(\bar u) \le \TV_\psi(u),
    \]
    Moreover, if $\bar u \ne u$, i.e.,
    $\norm{\bar u}_{L^\infty(\Omega)} < \norm{u}_{L^\infty(\Omega)}$,
    we deduce
    \[
        \int_\Omega \phi(\bar u(x)-f(x)) \d x < \int_\Omega \phi(u(x)-f(x)) \d x.
    \]
    This provides a contradiction. Thus we must have
    $\norm{u}_{L^\infty(\Omega)} \le \norm{f}_{L^\infty(\Omega)}$.
\end{proof}


\subsection{Remarks on ill-posed non-convex regularisers}
\label{sec:illposed}

We now provide a few remarks on using the technique on
popular models involving non-convex energies $\psi$. 
Problems of the form \eqref{eq:prob} with regularisation
functionals $\TV_\psi$ employing the energies constructed below
do not, however, in general have solutions in $\BVspace(\Omega)$.
Some remedies exist \cite{tuomov-tvq} which make these models still
worth considering.

\begin{remark}[Non-convex total variation]
    \label{remark:tvq}
    Regularisation functionals $\TV_\psi$ based on concave energies $\psi$ 
    have recently received increased attention, for the better modelling
    of real image gradient statistics \cite{huang1999statistics,HiWu13_siims,HiWu14_coap,ochsiterated,tuomov-tvq}.
    Let us define
    \[
        \psi^0 \defeq \lim_{t \downto 0} \psi(t)/t,
    \]
    and suppose $\psi: [0, \infty) \to [0, \infty)$ is concave, increasing,
    $\psi(0)=0$ and $\psi^0, \psi^\infty \in (0, \infty)$.
    The upper bound on $\psi^0$ forbids the typical energies $\psi(t)=t^q$
    for $q \in (0, 1)$, but allows slightly modified ones, linearised
    for small values.
    The above proof can now be extended to show that $\TV_\psi$ 
    is still (separably) double-Lipschitz comparable. Indeed,
    Lemma \ref{lemma:huber-biapprox} is trivial to prove in this case.
    Assuming for simplicity that $c+d=2$, by concavity
    \[
        \begin{split}
        c\psi(\norm{Av})+d\psi(\norm{Bv})-2\psi(\norm{v})
        &
        \le c\subdiff \psi(\norm{v})(\norm{Av}-\norm{v})
            + d\subdiff \psi(\norm{v})(\norm{Bv}-\norm{v})
        \\
        &
        \le \psi^0\max\{0, \norm{cAv}+\norm{dBv}-2\norm{v}\}
        \\
        &
        \le \psi^0 \tilde G_{A,B} \norm{v}.
        \end{split}
    \]
    The assumption $c+d=2$ is also not difficult to remove
    with the help of $\tilde J_{c,d}$, and is satisfied by
    the Lipschitz transformation we construct in 
    \S\ref{sec:lipschitz}.
    
    Unfortunately, the $\TV_\psi$ regularisation functional
    constructed with concave $\psi$ is not admissible. 
    It lacks weak* lower semicontinuity.
    Using area strict convergence \cite{rindler2013strictly}
    and additional multiscale regularisation, first introduced 
    in \cite{tuomov-bd} for discontinuous optical flow,
    problems involving such regularisation can however 
    be made well-posed \cite{tuomov-tvq}.
    It is outside the scope of the present paper to prove
    that the multiscale regularisation term $\eta$ is separably
    double-Lipschitz comparable.
    Nevertheless, assuming we had a solution $u \in \BVspace(\Omega)$
    to the basic model, or a remedied model still satisfying the
    double-Lipschitz comparability condition, then our technique
    would show $\H^{m-1}(J_u \setminus J_f)=0$.
\end{remark}

\begin{remark}[Perona-Malik]
    The Perona-Malik anisotropic diffusion \cite{perona1990scalespace}
    may also be written in the variational form \eqref{eq:prob}
    with $\phi(t)=t^2$ and $R(u)=\TV_\psi(u)$ for
    $\psi(t)=\log(1+t^2)$. Observe that this $\psi$ is not even concave, 
    unlike the models of the previous remark. The model
    suffers from exactly the same ill-posedness problems;
    for a review on approaches to make the problem well-posed,
    we refer to \cite{guidotti2014anisotropic}.
    Nevertheless, our approach can be adapted to study it,
    assuming we have a solution $u \in \BVUspace$ to the problem. 
    The existence is however generally not guaranteed.
    Indeed, with the notation of Lemma \ref{lemma:huber-biapprox},
    by properties of the logarithm, we have
    \begin{equation}
        \label{eq:pm-energy-1}
        c\psi(\norm{Av})
        =c\log(1+\norm{Av}^2)
        =c\log(c+c\norm{Av}^2) - c \log c.
    \end{equation}
    By the concavity of the logarithm, we have
    \[
        \begin{split}
        c\log(c+c\norm{Av}^2) - c\log(1+\norm{v}^2)
        &
        \le \frac{c}{1+\norm{v}^2}(c\norm{Av}^2-\norm{v}^2 + c -1)
        \\
        &
        = \frac{1}{1+\norm{v}^2}\bigl(\norm{cAv}^2-\norm{v}^2 + (c-1)(\norm{v}^2+1)\bigr).
        \end{split}
    \]
    If $c+d=2$, summing with the corresponding estimate for $B$ and $d$, we have
    \[
        c\log(c+c\norm{Av}^2) + d\log(d+d\norm{Bv}^2) - 2\log(1+\norm{v}^2)
        \le \frac{1}{1+\norm{v}^2} G'_{cA,dB}\norm{v}^2,
    \]
    where
    \[
        G'_{cA,dB}=\norm{c^2A^*A+d^2B^*B-2I}.
    \]
    With $c+d=2$, we also have
    \[
        - c \log c - d \log d \le 0.
    \]
    Therefore, using $1+\norm{v}^2 \ge \norm{v}$
    and referring back to \eqref{eq:pm-energy-1}, we obtain
    \[
        c\psi(\norm{Av})
        +d\psi(\norm{Bv})
        -2\psi(\norm{v})
        \le  G'_{cA,dB}\norm{v}.
    \]
    
    This is not exactly the estimate of Lemma \ref{lemma:huber-biapprox},
    but in \S\ref{sec:lipschitz}, we will in fact estimate
    $\tilde G_{cA,dB}$ through $G'_{cA,dB}$; 
    see Lemma \ref{lemma:biestim}. Therefore
    we may follow this reasoning with the argument of 
    Proposition \ref{prop:huber-admis}.
    We also assumed $c+d=2$, but it would not be difficult to remove this
    assumption with the help of $\tilde J_{c,d}$. The specific
    Lipschitz transformations that we construct next in 
    \S\ref{sec:lipschitz} however do satisfy $c+d=2$,
    that is, $\jacobianf{m}{\gammaone}{x}+\jacobianf{m}{\gammatwo}{x}=2$.
\end{remark}

\section{Lipschitz shift transformations}
\label{sec:lipschitz}

We now introduce the ``shift'' class of Lipschitz transformations
that we will use to push forward a purported solution
$u$ to \eqref{eq:prob} that does not satisfy
$\H^{m-1}(J_u \setminus J_f)=0$. The idea is to take
a Lipschitz graph $\Gamma$ on $J_u$ containing a violating
point $x_0$, and then to move the jump forward or
backward by an amount $\rho$, in order to construct
a better candidate solution.
It is interesting to relate the specific constructions here to the
general framework for designing transformations satisfying a 
PDE on the Jacobian determinant in \cite{dacorogna1990partial}.
Before the construction, we provide a simple general lemma stating
one  of the most crucial parts of our technique.
So far, we have not made significant use of the fact that our finite-dimensional
norm on $\R^m$ is the Euclidean norm, but here it is essential.

\subsection{A crucial estimate}

We will use the following simple lemma to simplify the estimation of the
double-Lipschitz comparison factor $\bitrans{\gammaone}{\gammatwo}$.

\begin{lemma}
    \label{lemma:biestim}
    Let $\gammaone, \gammatwo \in \LipClass(\Omega)$.
    For $\bitrans{\gammaone}{\gammatwo}$ defined in \eqref{eq:bitrans}, and
    $\lipjac{\gamma}$ defined in \eqref{eq:lipjac}, we have
    \[
        \bitrans{\gammaone}{\gammatwo} 
        \le
        \sup_{x \in \Omega} \frac{1}{2}\norm{[\lipjac{\gammaone}(x)]^* \lipjac{\gammaone}(x) + [\lipjac{\gammatwo}(x)]^* \lipjac{\gammatwo}(x) - 2I}.
    \]
\end{lemma}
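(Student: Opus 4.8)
The plan is to reduce the statement to a pointwise inequality in $v$ and then exploit that, for the Euclidean norm, $\norm{Av}^2$ is the quadratic form $\iprod{A^*Av}{v}$. Fix $x \in \Omega$ and abbreviate $A \defeq \lipjac{\gammaone}(x)$, $B \defeq \lipjac{\gammatwo}(x)$, so that it suffices to show
\[
    \norm{Av} + \norm{Bv} - 2 \le \tfrac{1}{2}\norm{A^*A + B^*B - 2I}
    \quad \text{whenever } \norm{v}=1,
\]
after which taking the supremum over such $v$ and then over $x \in \Omega$ gives the claim.

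The key elementary observation I would use is the scalar inequality $t \le \tfrac{1}{2}(t^2+1)$ for all $t \in \R$, which is just $(t-1)^2 \ge 0$. Applying it with $t = \norm{Av}$ and $t = \norm{Bv}$ and summing yields
\[
    \norm{Av} + \norm{Bv} - 2 \le \tfrac{1}{2}\bigl(\norm{Av}^2 + \norm{Bv}^2 - 2\bigr).
\]
Here is precisely where the Euclidean structure enters: we have $\norm{Av}^2 = \iprod{A^*Av}{v}$ and $\norm{Bv}^2 = \iprod{B^*Bv}{v}$, so for $\norm{v}=1$,
\[
    \norm{Av}^2 + \norm{Bv}^2 - 2 = \iprod{(A^*A + B^*B - 2I)v}{v} \le \norm{A^*A + B^*B - 2I},
\]
the last step because $A^*A + B^*B - 2I$ is self-adjoint, so its operator norm bounds its Rayleigh quotient on the unit sphere. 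Combining the two displays gives the desired pointwise bound, and the lemma follows.

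There is no real obstacle here; the only thing to be careful about is that the whole argument is specific to the $2$-norm (the identity $\norm{\cdot}^2 = \iprod{\cdot}{\cdot}$ and the self-adjointness of $A^*A$ are both essential), which is exactly why the excerpt flags that the Euclidean norm becomes indispensable from this point on. One may also note that the same computation immediately gives the variant $\bitrans{\gammaone}{\gammatwo} \le \sup_{x}\tfrac12\norm{A^*A + B^*B - 2I}$ with $A,B$ replaced by $c A, d B$ and a $\tilde J_{c,d}$ correction term, which is the form used in the Perona--Malik remark, but for the present lemma the bare estimate above suffices.
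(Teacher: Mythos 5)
Your proof is correct and essentially identical to the paper's: the inequality $(t-1)^2\ge 0$ you invoke is exactly the concavity-of-the-square-root estimate $\norm{Av}-\norm{v}\le\frac{1}{2\norm{v}}(\norm{Av}^2-\norm{v}^2)$ used there (the paper works with general $v\ne 0$ and divides by $\norm{v}$ rather than normalising first), followed by the same quadratic-form identity and operator-norm bound.
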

\begin{proof}
    By the concavity of the square root, for any $0 \ne v \in \R^m$, we have
    \[
        \norm{\lipjac{\gamma}(x) v} - \norm{v} \le \frac{1}{2\norm{v}} 
            \bigl(
                \norm{\lipjac{\gamma}(x) v}^2 - \norm{v}^2
            \bigr)
        =
        \frac{1}{2\norm{v}} 
            \iprod{v}{([\lipjac{\gamma}(x)]^* \lipjac{\gamma}(x) -I) v}.
    \]
    Therefore
    \begin{equation}
        \notag
        \begin{split}
        \norm{\lipjac{\gammaone}(x) v} + \norm{\lipjac{\gammatwo}(x) v} - 2\norm{v}
        &
        \le
        \frac{1}{2\norm{v}} 
            \iprod{v}{([\lipjac{\gammaone}(x)]^* \lipjac{\gammaone}(x) + [\lipjac{\gammatwo}(x)]^* \lipjac{\gammatwo}(x) - 2I) v}
        \\
        &
        \le
        \frac{1}{2}
            \norm{[\lipjac{\gammaone}(x)]^* \lipjac{\gammaone}(x) + [\lipjac{\gammatwo}(x)]^* \lipjac{\gammatwo}(x) - 2I} \norm{v}.
        \end{split}
    \end{equation}
    The lemma is proved.
\end{proof}

\subsection{The construction}

\newcommand{\vp}[1]{\mathfrak{v}_{#1}}
\newcommand{\tp}[1]{\mathfrak{t}_{#1}}
\def\rr{s}

\begin{figure}
    \centering
    \includegraphics{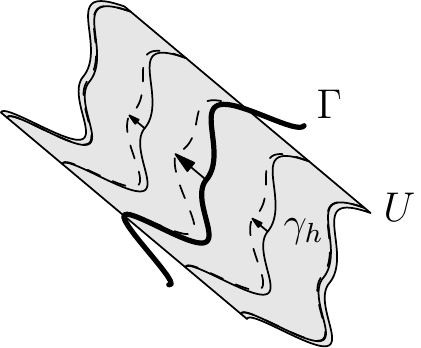}
    \caption{Illustration of the transformation $\gamma_h$ constructed in Lemma \ref{lemma:h-shift-gamma}. The solid lines are transformed into the dotted lines, with larger movement deep in the interior of $U$ (gray region), and none on the boundary.}
    \label{fig:h-shift-gamma}
\end{figure}

For brevity, we now define $\vp{x} \defeq P_{z_\Gamma}^\perp x$
and $\tp{x} \defeq \iprod{z_\Gamma}{x}$ to be the components
of $x$ on $V_\Gamma$ and along $z_\Gamma$, for a Lipschitz graph
$\Gamma$. The transformation $\gamma_h$ constructed in the next lemma
is illustrated in Figure \ref{fig:h-shift-gamma}.

\begin{lemma}
    \label{lemma:h-shift-gamma}
    Let $\Gamma \subset \Omega$ be a Lipschitz $(m-1)$-graph
    and $\rr>0$ be such that
    \[
        U \defeq \Gamma + (-\rr, \rr) z_\Gamma \subset \Omega.
    \]
    Suppose that $h \in W_0^{1,\infty}(V_\Gamma)$ satisfies
    $-\rr/3 \le h \le \rr/3$.
    Define the transformation $\gamma_h: U \to U$ by
    \begin{equation}
        \label{eq:gamma-h}
        \gamma_h(x) \defeq \vp{x} + \tilde \gamma_{s,h}(\tp{x}; \vp{x})z_\Gamma
    \end{equation}
    where
    \[
        \tilde \gamma_{s,h}(t; v) \defeq
        \begin{cases}
            t + \frac{\rr-f_\Gamma(v)+t}{\rr}h(v), & f_\Gamma(v)-\rr < t < f_\Gamma(v), \\
            t + \frac{\rr+f_\Gamma(v)-t}{\rr}h(v), & f_\Gamma(v) \le t < f_\Gamma(v)+\rr. \\
        \end{cases}
    \]
    Then $\gamma_h$ is 1-to-1 and Lipschitz 
    and
    \[
        \jacobianf{m}{\gamma_h}{x} =
        \abs{ \tilde \gamma_{s,h}'(\tp{x}; \vp{x})}
        =
        \begin{cases}
            1 + \frac{h(\vp{x})}{\rr}, & f_\Gamma(\vp{x})-\rr < \tp{x} < f_\Gamma(\vp{x}), \\
            1 - \frac{h(\vp{x})}{\rr}, & f_\Gamma(\vp{x}) \le \tp{x} < f_\Gamma(\vp{x})+\rr. \\
        \end{cases}
    \]
    There also exists a constant $C=C(h, f_\Gamma)>0$ such that
    for $\rho \in (-1, 1)$, we have for the various double-Lipschitz comparison constants defined in \eqref{eq:bitrans-components} the estimates
    \begin{equation}
        \label{eq:shift-trans-bounds}
        \begin{aligned}
        \bitransfull{\gamma_{\rho h}}{\gamma_{-\rho h}} & \le C \rho^2, &
        \bitrans{\gamma_{\rho h}}{\gamma_{-\rho h}} & \le C \rho^2, &
        \bitransjac{\gamma_{\rho h}}{\gamma_{-\rho h}} & = 0, &
        \didtrans{\gamma_{\rho h}} & \le C \abs{\rho}, \\
        \bitransfull{\gamma_{\rho h}}{\iota} & \le C \abs{\rho}, &
        \bitrans{\gamma_{\rho h}}{\iota} & \le C \abs{\rho}, &
        \bitransjac{\gamma_{\rho h}}{\iota} & \le C \abs{\rho}, \\
        \bitransfull{\inv\gamma_{\rho h}}{\iota} & \le C \abs{\rho}, &
        \bitrans{\inv\gamma_{\rho h}}{\iota} & \le C \abs{\rho}, &
        \bitransjac{\inv\gamma_{\rho h}}{\iota} & \le C \abs{\rho}, &
        \didtrans{{\inv \gamma_{\rho h}}} & \le C \abs{\rho}.
        \end{aligned}
    \end{equation}
\end{lemma}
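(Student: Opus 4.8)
The plan is to work in coordinates adapted to $\Gamma$, writing $x=\vp{x}+\tp{x}\,z_\Gamma$ with $\vp{x}=P_{z_\Gamma}^\perp x\in V_\Gamma$ and $\tp{x}=\iprod{z_\Gamma}{x}$, so that $\gamma_h$ becomes the ``vertical'' map $(\vp{x},\tp{x})\mapsto(\vp{x},\tilde\gamma_{s,h}(\tp{x};\vp{x}))$. First I would record the elementary properties of the scalar map $t\mapsto\tilde\gamma_{s,h}(t;v)$ for fixed $v$: it is continuous across $t=f_\Gamma(v)$ (both branches give the value $f_\Gamma(v)+h(v)$ there), it equals the identity at $t=f_\Gamma(v)\pm s$, and its derivative is $1+h(v)/s$ on the lower branch and $1-h(v)/s$ on the upper branch. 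Since $\abs{h}\le s/3$ these derivatives lie in $[2/3,4/3]$, so for each fixed $v$ the map is a strictly increasing bi-Lipschitz bijection of $(f_\Gamma(v)-s,f_\Gamma(v)+s)$ onto itself. As $\gamma_h$ preserves $\vp{x}$ it is thus a bijection $U\to U$; and since it agrees with $\iota$ on $\Gamma\pm s\,z_\Gamma$ and (because $h\in W_0^{1,\infty}(V_\Gamma)$ vanishes near $\partial V_\Gamma$) near the lateral boundary of $U$, extending by the identity produces a member of $\LipClass(\Omega,U)$, with Lipschitzness of $\gamma_h$ and $\inv\gamma_h$ obtained by patching the explicit bounds across the interface $\{\tp{x}=f_\Gamma(\vp{x})\}$ and the Lusin $N$-property being automatic for Lipschitz maps.

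In the adapted coordinates $\grad\gamma_h(x)$ is triangular, with diagonal blocks $I_{m-1}$ on $V_\Gamma$ and $\partial_t\tilde\gamma_{s,h}(\tp{x};\vp{x})$ along $z_\Gamma$, so $\jacobianf{m}{\gamma_h}{x}=\abs{\det\grad\gamma_h(x)}=\abs{\tilde\gamma'_{s,h}(\tp{x};\vp{x})}$, which gives the stated formula. In particular $\jacobianf{m}{\gamma_h}{x}$ is bounded away from $0$ and $\infty$, and $\jacobianf{m}{\gamma_{\rho h}}{x}+\jacobianf{m}{\gamma_{-\rho h}}{x}=2$ pointwise, whence $\bitransjac{\gamma_{\rho h}}{\gamma_{-\rho h}}=0$.

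For the estimates in \eqref{eq:shift-trans-bounds} I would use the inverse function theorem (valid $\L^m$-a.e., as in the proof of Lemma \ref{lemma:da-gammau}) to write $\grad\inv\gamma_{\rho h}(\gamma_{\rho h}(x))=[\grad\gamma_{\rho h}(x)]^{-1}$, again triangular, and then compute $\lipjac{\gamma_{\rho h}}(x)=\grad\inv\gamma_{\rho h}(\gamma_{\rho h}(x))\,\jacobianf{m}{\gamma_{\rho h}}{x}$ block by block. On each of the two branches it has the form: scalar factor $1\pm\rho h(\vp{x})/s$ on the $V_\Gamma$-block, $1$ on the $z_\Gamma$-entry, and an off-diagonal part $\pm\rho\,w(x)$ where $w$ is linear in $\grad f_\Gamma$ and $\grad h$ and bounded in terms of $h,f_\Gamma,s$; passing from $h$ to $-h$ flips the relevant signs. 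Forming $[\lipjac{\gamma_{\rho h}}]^*\lipjac{\gamma_{\rho h}}+[\lipjac{\gamma_{-\rho h}}]^*\lipjac{\gamma_{-\rho h}}-2I$, the linear-in-$\rho$ terms cancel and every remaining entry is $O(\rho^2)$ (the $V_\Gamma$-block picks up $2(\rho h/s)^2+2\rho^2\norm{w}^2$, the off-diagonal $\mp2(\rho h/s)\rho w$, the $z_\Gamma$-entry $2\rho^2\norm{w}^2$); Lemma \ref{lemma:biestim} then gives $\bitrans{\gamma_{\rho h}}{\gamma_{-\rho h}}\le C\rho^2$. The triangular form also shows directly that $\didtrans{\gamma_{\pm\rho h}}=\sup_x\norm{\grad\inv\gamma_{\pm\rho h}(\gamma_{\pm\rho h}(x))-I}=O(\abs{\rho})$ (diagonal entry $1/(1\pm\rho h/s)-1$ and off-diagonal both $O(\rho)$), so $\didtrans{\gamma_{\pm\rho h}}^2=O(\rho^2)$ and hence $\bitransfull{\gamma_{\rho h}}{\gamma_{-\rho h}}\le C\rho^2$. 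For the remaining rows one repeats the same block computations with $\gammatwo=\iota$ (so $\lipjac{\iota}=I$) and with $\gamma_{\rho h}$ replaced by $\inv\gamma_{\rho h}$, whose $m$-Jacobian is $1/\jacobianf{m}{\gamma_{\rho h}}{\cdot}=1+O(\rho)$ and whose $\lipjac{}$ is $I+O(\rho)$: there is now no partner transformation to cancel the linear term, so each matrix is only $O(\abs{\rho})$, giving $\bitrans{\gamma}{\iota},\bitransjac{\gamma}{\iota},\didtrans{\gamma}\le C\abs{\rho}$ for $\gamma\in\{\gamma_{\rho h},\inv\gamma_{\rho h}\}$, and since $\abs{\rho}<1$ the squared terms are absorbed into the full constants.

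The main obstacle I expect is the bookkeeping in this last block computation --- tracking the position of each entry of $\lipjac{\gamma_{\pm\rho h}}$ and the sign flips under $h\mapsto-h$ --- because that is exactly where the crucial cancellation of the $O(\rho)$ terms in the $\gamma_{\rho h}/\gamma_{-\rho h}$ pairing occurs, upgrading the generic $O(\abs{\rho})$ bound to the $O(\rho^2)$ bound on $\bitransfull{\gamma_{\rho h}}{\gamma_{-\rho h}}$ that is the whole point of the construction. By comparison, the verification that $\tilde\gamma_{s,h}$ is a bi-Lipschitz bijection, that the extension is Lipschitz, and the determinant identity are routine.
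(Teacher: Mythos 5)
Your proposal is correct and follows essentially the same route as the paper: adapted coordinates in which $\grad\gamma_h$ is triangular, an explicit block computation of $\lipjac{\gamma_{\pm\rho h}}$, the cancellation of the linear-in-$\rho$ terms in $[\lipjac{\gamma_{\rho h}}]^*\lipjac{\gamma_{\rho h}}+[\lipjac{\gamma_{-\rho h}}]^*\lipjac{\gamma_{-\rho h}}-2I$ combined with Lemma \ref{lemma:biestim}, and the observation that the $\iota$ and $\inv\gamma_{\rho h}$ comparisons admit no such cancellation and hence are only $O(\abs{\rho})$. The only cosmetic differences are that you read off $\jacobianf{m}{\gamma_h}{x}$ directly from the triangular determinant where the paper diagonalises $\grad\gamma_h[\grad\gamma_h]^T$, and your off-diagonal/corner bookkeeping uses the transposed convention, which changes which entries vanish exactly versus are merely $O(\rho^2)$ but not the conclusion.
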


\begin{remark}
    It does not hold that $\inv \gamma_h=\gamma_{-h}$, although it would
    be possible to construct such a transformation satisfying the same
    essential properties. We could then simply plug it into our proofs 
    instead of the above one. We do not however do this, since the 
    bounds \eqref{eq:shift-trans-bounds} are in that case a bit more
    work to prove, and having such a transformation would 
    only help very little with Lemma \ref{lemma:curvp-d-pf}.
\end{remark}

\begin{proof}
    After rotation and translation, if necessary, we may assume w.log 
    that $z_\Gamma=(0, \ldots, 0, 1)$, so that $x=(v,t) \defeq (\vp{x}, \tp{x})$.
    Then
    \[
        \tilde\gamma_{s,h}'(t; v) = d_h \defeq
        \begin{cases}
            1 + \frac{h(v)}{\rr}, & f_\Gamma(v)-\rr < t < f_\Gamma(v), \\
            1 - \frac{h(v)}{\rr}, & f_\Gamma(v) \le t < f_\Gamma(v) + \rr. \\
        \end{cases}
    \]
    and
    \[
        \grad_v \tilde\gamma_{s,h}(t; v) = c_h \defeq
        \begin{cases}
            \frac{\rr-f_\Gamma(v)+t}{\rr}\grad h(v)
            -
            \frac{\grad f_\Gamma(v)}{\rr}h(v),
            & f_\Gamma(v)-\rr < t < f_\Gamma(v), \\
            \frac{\rr+f_\Gamma(v)-t}{\rr}\grad h(v)
            +
            \frac{\grad f_\Gamma(v)}{\rr}h(v),
            & f_\Gamma(v) \le t < f_\Gamma(v) + \rr. \\
        \end{cases}
    \]
    Observe that $\abs{\tilde\gamma_{s,h}'(t; v)}=\tilde\gamma_{s,h}'(t; v)$ due to the bounds
    $-\rr/3 \le h \le \rr/3$.
    
    To calculate the Lipschitz factor and the Jacobian determinant
    \[
        \jacobianf{m}{\gamma_h}{x} =
        \sqrt{\det(\grad \gamma_h(x)[\grad \gamma_h(x)]^T)},
    \]
    we study the eigenvalues $\lambda_1,\ldots,\lambda_m$ of
    \[
        \grad \gamma_h(x)[\grad \gamma_h(x)]^T = 
        \begin{pmatrix}
            I & c_h \\
            0 & d_h \\
        \end{pmatrix}
        \begin{pmatrix}
            I & 0 \\
            c_h^T & d_h
        \end{pmatrix}
        =
        \begin{pmatrix}
            I + c_h \otimes c_h & d_h c_h \\
            d_h c_h^T & d_h^2
        \end{pmatrix}.
    \]
    Easily we see that $\lambda_3=\cdots=\lambda_{m}=1$,
    with the corresponding eigenvector orthogonal to $c_h$.
    For the two remaining, important, eigenvalues, setting
    $y=(c_h, \alpha)$ for the unknown eigenvector, we obtain 
    the system of equations
    \[
        1 + \norm{c_h}^2 + \alpha d_h = \lambda
        \quad
        \text{and}
        \quad
        d_h \norm{c_h}^2 + \alpha d_h^2 = \lambda \alpha.
    \]
    Solving this system of equations, we obtain the solutions
    \[
        \lambda_1(x), \lambda_2(x)=\frac{1+d_h^2+\norm{c_h}^2 
            \pm \sqrt{(1+d_h^2+\norm{c_h}^2)^2-4d_h^2}
            }{2}.
    \]
    This gives as claimed
    \[
        \jacobianf{m}{\gamma_h}{x} = \sqrt{\prod_{i=1}^m \lambda_i}
        =\sqrt{\lambda_1(x)\lambda_2(x)}=\abs{d_h}=\tilde\gamma_{s,h}'(t; v).
    \]

    It remains to consider the bounds \eqref{eq:shift-trans-bounds}.
    Letting $\gammaone \defeq \gamma_{\rho h}$ and
    $\gammatwo \defeq \gamma_{-\rho h}$, and recalling that
    \[
        \lipjac{\gamma}(x) \defeq \grad \inv \gamma(\gamma(x)) \jacobianf{m}{\gamma}{x},
    \]
    by Lemma \ref{lemma:biestim} we have
    \[
        \bitrans{\gammaone}{\gammatwo}
        \le
        \sup_{x \in \Omega} \frac{1}{2} \norm{[\lipjac{\gammaone}(x)]^* \lipjac{\gammaone}(x) + [\lipjac{\gammatwo}(x)]^* \lipjac{\gammatwo}(x) - 2I}.
    \]
    We want to estimate this further.
    Recalling the proof of Lemma \ref{lemma:da-gammau},
    $\grad \inv\gamma_h(\gamma_h(x))=\inv{[\grad \gamma_h(x)]}$ 
    holds (a.e.). With
    \[
        \grad \gamma_h(x)
        =
        \begin{pmatrix}
            I & 0 \\
            c_{h}^T & d_{h} \\
        \end{pmatrix},
    \]
    it can easily be verified that
    \begin{equation}
        \label{eq:inv-grad-gamma-lineartrans}
        \inv{[\grad \gamma_h(x)]}
        =
        \begin{pmatrix}
            I & 0 \\
            -c_{h}^T \inv d_{h} & \inv d_{h} \\
        \end{pmatrix}.
    \end{equation}
    It follows that
    \[
        \lipjac{\gamma_h}(x) 
        =
        \begin{pmatrix}
            d_{h}I & 0 \\
            -c_{h}^T & 1 \\
        \end{pmatrix},
    \]
    and
    \[
        [\lipjac{\gamma_h}(x)]^*\lipjac{\gamma_h}(x) 
        =
        \begin{pmatrix}
            d_{h}^2 I + c_{h}^T c_{h} & -c_{h} \\
            -c_{h}^T & 1 \\
        \end{pmatrix}.
    \]
    We observe that
    \begin{align}
        \label{eq:c-gammaone}
        c_{\pm\rho h} & = \pm \rho c_h, \quad\text{and}\\
        \label{eq:d-gammaone}
        d_{\pm \rho h}^2 & =1\pm 2 \rho d_h + \rho^2 d_h^2, 
    \end{align}
    Thus $d_{\rho h}^2+d_{-\rho h}^2-2=2\rho^2d_h^2$,
    and we deduce
    \[
        [\lipjac{\gammaone}(x)]^*\lipjac{\gammaone}(x) 
        +
        [\lipjac{\gammatwo}(x)]^*\lipjac{\gammatwo}(x) 
        -2I
        =
        2\rho^2
        \begin{pmatrix}
            d_h^2 I + c_h^T c_h & 0 \\
            0 & 0 \\
        \end{pmatrix}.
    \]
    It follows
    \[
        \bitrans{\gammaone}{\gammatwo}
        \le C \rho^2.
    \]
    Moreover, \eqref{eq:d-gammaone} also yields
    \[
        \bitransjac{\gamma_{\rho h}}{\gamma_{-\rho h}}=d_{\rho h}+d_{-\rho h}-2 =0.
    \]
    
    In order to estimate
    \[
        \didtrans{\gamma_{\rho h}} = \sup_{x \in \Omega} \norm{\grad\inv\gamma_{\rho h}\gamma_{\rho h}(x)-I}, 
    \]
    we calculate using \eqref{eq:inv-grad-gamma-lineartrans} that
    \[
        \inv{[\grad \gamma_{\rho h}(x)]}-I
        =
        \inv d_{\rho h}
        \begin{pmatrix}
            0 & 0 \\
            -c_{\rho h}^T & 1-d_{\rho h}.
        \end{pmatrix}
    \]
    By the bounds on $r$, $f_\Gamma$ and $h$,
    we have $\abs{d_{\rho h}} \ge 2/3$.
    Therefore, it follows  from
    \eqref{eq:c-gammaone}, \eqref{eq:d-gammaone} that
    \[
        \didtrans{\gamma} 
        = 
        \sup_{x\in\Omega} \norm{\grad \inv \gamma_{\rho h}(\gamma_{\rho h}(x))-I}
        =
        \sup_{x\in\Omega} \norm{\inv{[\grad \gamma_{\rho h}(x)]}-I}
        \le C \rho,
        \quad (0 < \rho < 1).
    \]
    This proves all the bounds in \eqref{eq:shift-trans-bounds}
    involving both $\gamma_{\rho h}$ and $\gamma_{\rho h}$.
    The bounds involving $\iota$ and $\inv \gamma_{\rho h}$
    are proved analogously. For the $\inv \gamma_{\rho h}$
    bounds we use the fact at $y=\gamma_h(x)$ we have
    \[
        \jacobianf{m}{\inv \gamma_h}{y}=1/\jacobianf{m}{\gamma_h}{x}=1/d_h,
    \]
    as well as $\grad \gamma_h(\inv\gamma_h(y))=\grad \gamma_h(x)$,
    so that
    \[
        \lipjac{\inv \gamma_h}
        =
        \begin{pmatrix}
            \inv d_h I & 0 \\
            -c_{h}^T \inv d_{h} & 1 \\
        \end{pmatrix}.
    \]
    We skip the elementary details.
\end{proof}

We will also require an estimate of the following type.

\begin{lemma}
    \label{lemma:f-transform-estim}
    Let the Lipschitz transformation $\gamma_h: \Omega \to \Omega$ 
    have the form \eqref{eq:gamma-h}, and be identity outside
    $U \subset \Omega$. Let $u \in \BVspace(\Omega)$. Define
    \[
        M_{\gamma} \defeq \sup_{x \in U} \norm{\gamma_h(x)-x}.
    \]
    Then $M_\gamma = \norm{h(v)}_{L^\infty(V_\Gamma)}$ and
    \[
        \int_U \abs{u(\gamma_h(x))-u(x)} \d x 
        \le M_{\gamma_h} \abs{D u}(U).
    \]
\end{lemma}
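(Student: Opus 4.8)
The plan is to reduce the integral estimate to a one–dimensional inequality about slices of $u$ in the direction $z_\Gamma$, where the monotonicity built into $\tilde\gamma_{s,h}(\,\cdot\,;v)$ does the work. After the rotation used in the proof of Lemma~\ref{lemma:h-shift-gamma}, assume $z_\Gamma = e_m$ and write $x=(v,t)$ with $v=\vp{x}\in z_\Gamma^\perp$, $t=\tp{x}$; set $U^v \defeq (f_\Gamma(v)-\rr, f_\Gamma(v)+\rr)$ and $\vartheta_v(t)\defeq\tilde\gamma_{s,h}(t;v)$, so that $\gamma_h(v,t)=(v,\vartheta_v(t))$ and $\gamma_h(x)-x=(\vartheta_v(t)-t)z_\Gamma$. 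From the explicit formula, $\vartheta_v(t)-t$ equals $h(v)$ times a coefficient that lies in $(0,1]$ for $t\in U^v$ and equals $1$ at $t=f_\Gamma(v)$; hence $\sup_{t\in U^v}\abs{\vartheta_v(t)-t}=\abs{h(v)}$, the supremum being attained at the point $g_\Gamma(v)\in\Gamma\subset U$. Taking the supremum over $v\in V_\Gamma$ and recalling that $\gamma_h=\iota$ off $U$ gives $M_\gamma=\norm{h}_{L^\infty(V_\Gamma)}$, the first claim.

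For the integral bound, observe first that $\abs{h}\le\rr/3$ forces the fibrewise derivative $\vartheta_v'=d_h$ to lie in $[2/3,4/3]$, so each $\vartheta_v$ is a strictly increasing bi-Lipschitz bijection of the interval $U^v$ onto itself (continuous, strictly increasing, and tending to the endpoints of $U^v$ at the endpoints). The one-dimensional fact I would establish is the following: if $I\subset\R$ is an open interval, $w\in\BVspace(I)$, and $\vartheta\colon I\to I$ is a strictly increasing bi-Lipschitz bijection, then
\[
    \int_I \abs{w(\vartheta(t))-w(t)}\d t \le \Bigl(\sup_{s\in I}\abs{\vartheta(s)-s}\Bigr)\abs{Dw}(I).
\]
Taking the right-continuous pointwise representative of $w$, for which $\abs{w(b)-w(a)}\le\abs{Dw}([a,b])$ whenever $a\le b$ in $I$, one has the pointwise estimate $\abs{w(\vartheta(t))-w(t)}\le\abs{Dw}\bigl([\min\{t,\vartheta(t)\},\max\{t,\vartheta(t)\}]\bigr)$. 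Integrating in $t$ and applying Tonelli's theorem yields $\int_I\abs{w(\vartheta(t))-w(t)}\d t\le\int_I\mu(\tau)\,d\abs{Dw}(\tau)$, where $\mu(\tau)\defeq\L^1\bigl(\{t\in I:\tau\text{ lies between }t\text{ and }\vartheta(t)\}\bigr)$. Since $\vartheta$ is increasing, that set is exactly the interval with endpoints $\tau$ and $\inv\vartheta(\tau)$, so $\mu(\tau)=\abs{\tau-\inv\vartheta(\tau)}=\abs{\vartheta(\inv\vartheta(\tau))-\inv\vartheta(\tau)}\le\sup_s\abs{\vartheta(s)-s}$, and the inequality follows.

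To conclude, combine Fubini in the coordinates $(v,t)$ with the one-dimensional sections theorem for $\BVspace$ functions (see, e.g., \cite[Section 3.11]{ambrosio2000fbv}): writing $D_{z_\Gamma}u\defeq\iprod{Du}{z_\Gamma}$, for $\H^{m-1}$-a.e.\ $v$ the slice $u^v(t)\defeq u(v,t)$ belongs to $\BVspace(U^v)$, and $\int_{V_\Gamma}\abs{Du^v}(U^v)\d\H^{m-1}(v)=\abs{D_{z_\Gamma}u}(U)\le\abs{Du}(U)$, the last step because $z_\Gamma$ is a unit vector. As $\vartheta_v$ is bi-Lipschitz it preserves $\L^1$-null sets, so $t\mapsto u^v(\vartheta_v(t))$ agrees a.e.\ with the corresponding composition of the right-continuous representative of $u^v$; applying the one-dimensional inequality with $w=u^v$, $I=U^v$, $\vartheta=\vartheta_v$ (whose displacement is bounded by $\abs{h(v)}\le M_{\gamma_h}$) gives
\[
    \int_U \abs{u(\gamma_h(x))-u(x)}\d x = \int_{V_\Gamma}\int_{U^v}\abs{u^v(\vartheta_v(t))-u^v(t)}\d t\,\d\H^{m-1}(v) \le M_{\gamma_h}\int_{V_\Gamma}\abs{Du^v}(U^v)\d\H^{m-1}(v) \le M_{\gamma_h}\abs{Du}(U).
\]
The only nontrivial point is the one-dimensional inequality with the \emph{non-constant} displacement $\vartheta(t)-t$ in place of the constant translation in the classical bound $\norm{w(\,\cdot\,+c)-w}_{L^1}\le\abs{c}\abs{Dw}$; it survives precisely because the strict monotonicity of $\vartheta_v$ keeps the occupation length $\mu(\tau)$ below $\sup\abs{\vartheta_v-\iota}$. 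The reduction by slicing, and the matching of pointwise representatives under the bi-Lipschitz change of variables on each fibre, are then routine.
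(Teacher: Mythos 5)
Your proposal is correct and follows essentially the same route as the paper: after the same rotation, both arguments slice $u$ along $z_\Gamma$, bound $\abs{u_v(\vartheta_v(t))-u_v(t)}$ by the variation of the slice over the interval between $t$ and $\vartheta_v(t)$, and apply Fubini/Tonelli together with an occupation-length bound of $M_{\gamma_h}$ before summing the slices via the one-dimensional sections formula. Your version merely packages the fibre estimate as a standalone one-dimensional lemma and is slightly more explicit about pointwise representatives and the monotonicity used in the occupation-length computation; no substantive difference.
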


\begin{proof}
    Let us again, without loss of generality after rotation and translation, if necessary, 
    that $z_\Gamma=(0, \ldots, 0, 1)$, so that $x=(v,t) \defeq (\vp{x}, \tp{x})$.
    Then 
    \[
        \gamma_h(v, t)=(v, \tilde \gamma_v(t)),
    \]
    and the slice
    \[
        u_v(t) \defeq u(v, t), \quad (-s < t < s; v \in V_\Gamma)
    \]
    satisfies $u_v \in \BVspace(-s, s)$ for $\L^{m-1}$-\ae $v \in V_\Gamma$.
    Thus
    \[
        \abs{u_v(\tilde \gamma(t; v))-u_v(t)} \le \abs{Du_v}([t, {\tilde \gamma(t; v)}])
        =\int_{-s}^{s} \chi_{[t, {\tilde \gamma(t; v)}]}(\tau) \d\abs{Du_v}(\tau).
    \]
    Here we use the convention $[a, b] \defeq [b, a]$ if $b<a$.
    Observe that $[t, \tilde \gamma(t; v)] \subset [t, t+M_{\gamma_h}]$ 
    or $[t, \tilde \gamma(t; v)] \subset [t, t-M_{\gamma_h}]$.
    Using Fubini's theorem and basic properties of one-dimensional
    slices in BV \cite{ambrosio2000fbv}, we may thus estimate
    \[
        \begin{split}
        \int_U \abs{u(\gamma_h(x))-u(x)} \d x 
        &
        =
        \int_{V_\Gamma} \int_{-s}^{s} \abs{u_v(\gamma_v(t))-u_v(t)} \d t \d\H^{m-1}(v)
        \\
        &
        \le
        \int_{V_\Gamma} \int_{-s}^{s} \int_{-s}^{s} \chi_{[t, {\tilde \gamma(t; v)}]}(\tau) \d\abs{Du_v}(\tau) \d t \d\H^{m-1}(v)
        \\
        &
        =
        \int_{V_\Gamma} \int_{-s}^{s} \int_{-s}^{s} \chi_{[t, {\tilde \gamma(t; v)}]}(\tau) \d t \d\abs{Du_v}(\tau) \d\H^{m-1}(v)
        \\
        &
        \le
        \int_{V_\Gamma} \int_{-s}^{s} M_{\gamma_h} \d\abs{Du_v}(\tau) \d\H^{m-1}(v)
        \\
        &
        =
        M_{\gamma_h} \abs{Du}(U).
        \end{split}
    \]
    
    Finally
    \[
        \norm{\gamma_h(x)-x}
        = \abs{h(v)} \cdot
        \begin{cases}
             \frac{\rr-f_\Gamma(v)+t}{\rr}, & f_\Gamma(v)-\rr < t < f_\Gamma(v), \\
            \frac{\rr+f_\Gamma(v)-t}{\rr}, & f_\Gamma(v) \le t < f_\Gamma(v)+\rr. 
        \end{cases}
    \]
    Clearly this gives $M_{\gamma_h} = \norm{h(v)}_{L^\infty(V_\Gamma)}$.
\end{proof}

In the following lemma, based on the construction of
Lemma \ref{lemma:h-shift-gamma}, we now construct our family
of shift transformations parametrised by the size $r>0$
of the neighbourhood of $x_0$ where the transformation is
performed, and the magnitude $\rho>0$ of the transformation.

\begin{lemma}
    \label{lemma:gamma-rho-h-r}
    Let $\Omega \subset \R^m$, and $\Gamma \subset \Omega$ 
    be a Lipschitz $(m-1)$-graph. Pick $x_0 \in \Gamma$ and
    $0 \ne \baseh \in \basehspace$ satisfying $0 \le \baseh \le 1$.
    Set
    \[
        h_r(v) \defeq r \baseh((v-\vp{x_0})/r),
        \quad (r>0).
    \]
    Then there exists $r_0>0$ such that each of the maps
    \[
        \gammax{\rho}{h}{r}(x) \defeq \vp{x} + \tilde \gamma_{3 r,\rho h_r}(\tp{x}; \vp{x}) z_\Gamma,
        \quad (-1 < \rho < 1,\, 0 < r < r_0),
    \]
    satisfies $\gammax{\rho}{h}{r} \in \LipClass(\Omega; U_r)$ for
    \[
        U_r \defeq x_0 + z_\Gamma^\perp \isect \B(0, r) + (3 + \lip f_\Gamma) (-r, r) z_\Gamma.
    \]
    Moreover, there exists a constant $C>0$ such that the transformations
    $\gammax{\rho}{h}{r}$, ($0 < r < r_0$, $-1 < \rho < 1$), satisfy 
    \[
        \begin{aligned}
        \bitransfull{\gammax{\rho}{h}{r}}{\gammax{-\rho}{h}{r}} & \le C \rho^2, 
        &
        \bitransfull{\gammax{\rho}{h}{r}}{\iota} & \le C \abs{\rho}, 
        \\
        M_{\gammax{\rho}{h}{r}} & = \abs{\rho} r, &
        \bitransfull{\inv{\gammax{\rho}{h}{r}}}{\iota} & \le C \abs{\rho}
        \end{aligned}
    \]
    for the various double-Lipschitz comparison constants defined in \eqref{eq:bitrans-components}.
\end{lemma}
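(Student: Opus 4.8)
The plan is to realise $\gammax{\rho}{h}{r}$ as the shift map of Lemma \ref{lemma:h-shift-gamma}, applied with scale parameter $s=3r$ and bump $\rho h_r$ and extended by the identity, and then to read off each assertion from Lemmas \ref{lemma:h-shift-gamma} and \ref{lemma:f-transform-estim}. First I would rotate and translate so that $z_\Gamma=(0,\ldots,0,1)$, extend $f_\Gamma$ to all of $z_\Gamma^\perp$ by Kirszbraun's theorem, and extend $\baseh$ by zero, so that $h_r\in W_0^{1,\infty}(z_\Gamma^\perp)$ with $\support h_r\subset\vp{x_0}+\B(0,r)$. Since $0\le\baseh\le 1$ we have $0\le\rho h_r\le r=(3r)/3$ for $\abs{\rho}<1$, which is exactly the smallness hypothesis of Lemma \ref{lemma:h-shift-gamma} for $s=3r$ and $h=\pm\rho h_r$. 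As the explicit formula for $\gammax{\rho}{h}{r}$ only involves $f_\Gamma$ through its values on $\support h_r$, I would apply that lemma to a small piece $\Gamma_0\defeq g_\Gamma(\vp{x_0}+\B(0,2r))$ of the graph; choosing $r_0$ small enough that every auxiliary neighbourhood occurring here --- in particular $U_r$ and $\Gamma_0+(-3r,3r)z_\Gamma$, both contained in some $\B(x_0,Cr)$ --- lies inside the open set $\Omega$ whenever $0<r<r_0$, Lemma \ref{lemma:h-shift-gamma} then gives that $\gammax{\rho}{h}{r}$ is one-to-one, bi-Lipschitz, and coincides with the map $\gamma_{\rho h_r}$ there.

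Membership $\gammax{\rho}{h}{r}\in\LipClass(\Omega,U_r)$ is then a matter of locating the support. The map is the identity wherever $h_r(\vp{x})=0$, i.e., $\vp{x}\notin\vp{x_0}+\B(0,r)$, and wherever $\tp{x}\notin(f_\Gamma(\vp{x})-3r,f_\Gamma(\vp{x})+3r)$; for the remaining $x$, using $\tp{x_0}=f_\Gamma(\vp{x_0})$ and $\abs{f_\Gamma(\vp{x})-f_\Gamma(\vp{x_0})}\le r\lip f_\Gamma$ gives $\abs{\tp{x}-\tp{x_0}}<(3+\lip f_\Gamma)r$, i.e., $x\in U_r$. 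Since on each slice $\{\vp{x}=v\}$ the map acts as an increasing self-bijection of $(f_\Gamma(v)-3r,f_\Gamma(v)+3r)$ fixing the endpoints, and as the identity elsewhere, it is a bi-Lipschitz bijection of $\Omega$ with the Lusin $N$-property that moves no point outside $U_r$. The displacement follows directly from Lemma \ref{lemma:f-transform-estim}: $M_{\gammax{\rho}{h}{r}}=\norm{\rho h_r}_{L^\infty}=\abs{\rho}\,r\,\norm{\baseh}_{L^\infty}$, which equals $\abs{\rho}r$ under the normalisation $\norm{\baseh}_{L^\infty}=1$.

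The one step needing genuine care --- and the only real content beyond bookkeeping --- is the uniformity in $r$ of the constant $C$ in \eqref{eq:shift-trans-bounds}, which a priori depends on $h$. Here the point is that $h_r$ has been scaled exactly critically: $\grad h_r(v)=\grad\baseh((v-\vp{x_0})/r)$ and $h_r(v)/(3r)=\tfrac13\baseh((v-\vp{x_0})/r)$, so the quantities $c_h,d_h$ driving all of \eqref{eq:shift-trans-bounds} satisfy, for $h=\pm\rho h_r$ and $s=3r$, the $r$-independent bounds $\abs{d_h}\in[\tfrac23,\tfrac43]$ and $\norm{c_h}\le\abs{\rho}\bigl(\norm{\grad\baseh}_{L^\infty}+\tfrac13\lip f_\Gamma\bigr)$. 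Inspecting the proof of Lemma \ref{lemma:h-shift-gamma}, whose constant depends only on bounds for $c_h$ and $d_h$, one obtains a single $C=C(\norm{\grad\baseh}_{L^\infty},\lip f_\Gamma)$ valid for all $0<r<r_0$ and $\abs{\rho}<1$. Reading the $\bitransfull$ entries of \eqref{eq:shift-trans-bounds} off with $h=h_r$ (so that $\gamma_{\pm\rho h_r}=\gammax{\pm\rho}{h}{r}$) then yields $\bitransfull{\gammax{\rho}{h}{r}}{\gammax{-\rho}{h}{r}}\le C\rho^2$ and $\bitransfull{\gammax{\rho}{h}{r}}{\iota},\bitransfull{\inv{\gammax{\rho}{h}{r}}}{\iota}\le C\abs{\rho}$, which completes the proof.
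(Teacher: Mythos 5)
Your proof is correct and follows essentially the same route as the paper's: apply Lemma \ref{lemma:h-shift-gamma} to the restricted piece of the graph with $s=3r$ and $h=\pm\rho h_r$, locate the non-identity region inside $U_r$, and read $M_{\gammax{\rho}{h}{r}}$ off Lemma \ref{lemma:f-transform-estim}. You are in fact more careful than the paper on the one delicate point---the uniformity in $r$ of the constant $C$, which the paper's one-line proof does not address---and your observation that the critical scaling of $h_r$ yields $r$-independent bounds on $c_h$ and $d_h$ is exactly the right justification.
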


\begin{remark}
    We can, for example, set $\baseh(v) \defeq \max\{0, 1-\norm{v}\}$.
\end{remark}

\begin{proof}
    We choose $r_0>0$ small enough that $U_r \subset \Omega$.
    Clearly $\gammax{\rho}{h}{r}$ and $\inv{\gammax{\rho}{h}{r}}$
    satisfy the Lusin $N$-property, and are Lipschitz mappings.
    Since $0 \le \baseh \le 1$, evidently also
    $M_\gamma = \abs{\rho} r$, and $\gamma$ reduces
    to the identity outside $U_r$. The remaining claims follow
    by applying Lemma \ref{lemma:h-shift-gamma} on
    \[
        \Gamma' \defeq g_\Gamma(V_\Gamma \isect \B(\vp{x_0}, r)) \subset \Gamma
    \]
    with $h=h_r$ and $s=3r$.
\end{proof}

\section{Proof of the main result for $p>1$}
\label{sec:main}

We now begin the proof of Theorem \ref{theorem:jumpset-strict}.
Most of the proof consists of producing for the fidelity term
a counterpart of the double-Lipschitz comparability condition
of the regulariser. We do this in 
\S\ref{sec:fidestim} after a technical lemma in \S\ref{sec:tech}.
The fidelity estimates are based on the specific Lipschitz 
transformations constructed in the previous section. Averaging
over two different Lipschitz transformations
$\gammaone=\gammax{\rho}{h}{r}$ and $\gammatwo=\gammax{-\rho}{h}{r}$
will provide an $O(\rho)$ decrease estimate for the fidelity.
Importantly, in order to take advantage of the strict convexity
of $\phi$, we actually need as our improvement candidates 
convex combinations $\theta u + (1-\theta) \pf\gamma u$.
After dealing with the fidelity function, we apply in
\S\ref{sec:regestim} the
double-Lipschitz comparability to get an $O(\rho^2)$ 
increase estimate on the regulariser -- averaged over
the two transformations. After these estimates, the proof
of Theorem \ref{theorem:jumpset-strict} will be immediate.

\subsection{A technical lemma}
\label{sec:tech}

We can only perform fidelity estimation 
at a base point $x_0$ outside the set $Z_u$ we construct next. 
For this, given $u \in \BVspace(\Omega)$, we now fix a countably family 
of Lipschitz $(m-1)$-graphs $\{\Gamma_i\}_{i=1}^\infty$, 
such that
\begin{equation}
    \label{eq:ju-lambda-collection}
    \H^{m-1}(J_u \setminus \Union_{i=1}^\infty \Gamma_i)=0.
\end{equation}
Such a family exists by the rectifiability of $J_u$. 
For $\H^{m-1}$-\ae $x_0 \in J_u$, we may then find a Lipschitz graph
$\Gamma=\Gamma^{x_0}$ with $x_0 \in \Gamma^{x_0} \Subset \Gamma_i$ 
for some $i=i(x_0)$, satisfying the following two properties. 
Firstly $V_{\Gamma^{x_0}} \supset \B(P_{z_\Gamma}^\perp x_0, r(x_0))$ 
for some $r(x_0)>0$. This can clearly be satisfied 
since we can by Kirzbraun's theorem assume $V_\Gamma=z_\Gamma^\perp$.
Secondly we can take the traces of $u$ from both sides of
$\Gamma$ to exist at $x_0$ and agree with $u^\pm(x_0)$.
This can be seen from, e.g., the BV trace theorem
\cite[Theorem 3.77]{ambrosio2000fbv}.

\begin{lemma}
    \label{lemma:z-u}
    Let $u \in \BVspace(\Omega)$. Then there exists a Borel set $Z_u$
    with $\H^{m-1}(Z_u)=0$ such that
    every $x \in J_u \setminus Z_u$ is a Lebesgue
    point of the one-sided traces $u^\pm$, and
    \begin{equation}
        \label{eq:theta-du-gammaxi}
        \Theta^*_{m-1}(\abs{Du} \restrict (\Gamma^x)^+; x) = 0,
        \text{ and }
        \Theta^*_{m-1}(\abs{Du} \restrict (\Gamma^x)^-; x) = 0.
    \end{equation}
\end{lemma}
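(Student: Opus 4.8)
The plan is to take $Z_u$ as a union of three $\H^{m-1}$-negligible sets, each responsible for one assertion, and then to verify the assertions pointwise off $Z_u$. I would put into $Z_u$, first, the set $Z_0 \subset J_u$ of those $x_0$ for which no Lipschitz graph $\Gamma^{x_0}$ with the two properties listed in the paragraph before the lemma can be selected; since $\H^{m-1}(J_u \setminus \Union_i \Gamma_i)=0$ by rectifiability, since $V_{\Gamma^{x_0}}$ can be taken to be $z_\Gamma^\perp$ by Kirszbraun's theorem, and by the BV trace theorem \cite[Theorem 3.77]{ambrosio2000fbv}, $\H^{m-1}(Z_0)=0$. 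Crucially, for $x_0 \in J_u \setminus Z_0$ the construction makes $\Gamma^{x_0}$ coincide with $\Gamma_{i(x_0)}$ on the cylinder $\B(\pgp x_0, r(x_0)) + \R z_\Gamma$, hence $\B(x_0,\rho) \isect \Gamma^{x_0} = \B(x_0,\rho) \isect \Gamma_{i(x_0)}$ for $\rho < r(x_0)$. Secondly I would put into $Z_u$ the set $Z_1 \subset J_u$ of points that are not Lebesgue points of one or both of $u^\pm$, and thirdly the set $Z_2 \defeq \Union_i Z_2^i$, where $Z_2^i \defeq \{x_0 \in \Gamma_i \mid \Theta^*_{m-1}(\abs{Du} \restrict (\Omega \setminus \Gamma_i); x_0) > 0\}$.

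To see $\H^{m-1}(Z_1)=0$: the traces $u^\pm$ are $\H^{m-1} \restrict J_u$-measurable and $\H^{m-1}$-\ae finite, and $J_u$ equals $\Union_i (J_u \isect \Gamma_i)$ up to an $\H^{m-1}$-null set, each $\H^{m-1} \restrict (J_u \isect \Gamma_i)$ being a $\sigma$-finite Radon measure (carried by a Lipschitz graph); truncating $u^\pm$ to $\{\abs{u^\pm} \le M\}$ and applying the Besicovitch differentiation theorem to $\H^{m-1} \restrict (J_u \isect \Gamma_i)$ shows $\H^{m-1}$-\ae point of each $J_u \isect \Gamma_i$ is a Lebesgue point of $u^\pm$. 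To see $\H^{m-1}(Z_2^i)=0$ — the measure-theoretic core — note that $\nu \defeq \abs{Du} \restrict (\Omega \setminus \Gamma_i)$ is a finite Radon measure carried by $\Omega \setminus \Gamma_i$ and hence mutually singular with $\lambda \defeq \H^{m-1} \restrict \Gamma_i$, while $\lambda$ is a Radon measure satisfying the elementary bound $\lambda(\B(x,\rho)) \le \sqrt{1+L_i^2}\,\omega_{m-1}\rho^{m-1}$, with $L_i$ a Lipschitz constant of $f_{\Gamma_i}$; the Besicovitch differentiation theorem for the pair $(\nu, \lambda)$ gives $\nu(\B(x,\rho))/\lambda(\B(x,\rho)) \to 0$ for $\H^{m-1}$-\ae $x \in \Gamma_i$, and multiplying by $\lambda(\B(x,\rho))/(\omega_{m-1}\rho^{m-1}) \le \sqrt{1+L_i^2}$ yields $\Theta^*_{m-1}(\nu; x) = 0$ for $\H^{m-1}$-\ae $x \in \Gamma_i$. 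Thus $\H^{m-1}(Z_u) = 0$.

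Finally, let $x_0 \in J_u \setminus Z_u$ and $i \defeq i(x_0)$, so $x_0 \in \Gamma_i$. By $x_0 \notin Z_1$ it is a Lebesgue point of $u^\pm$. For $\rho < r(x_0)$ we have $\B(x_0,\rho) \isect \Gamma^{x_0} = \B(x_0,\rho) \isect \Gamma_i$; since $(\Gamma^{x_0})^+$ is disjoint from $\Gamma^{x_0}$ and $(\Gamma^{x_0})^+ \isect \B(x_0,\rho) \subset \B(x_0,\rho)$, the set $(\Gamma^{x_0})^+ \isect \B(x_0,\rho)$ is disjoint from $\Gamma_i$, whence
\[
    \abs{Du}\bigl((\Gamma^{x_0})^+ \isect \B(x_0,\rho)\bigr) \le \abs{Du}\bigl(\B(x_0,\rho) \setminus \Gamma_i\bigr) = \bigl(\abs{Du} \restrict (\Omega \setminus \Gamma_i)\bigr)\bigl(\B(x_0,\rho)\bigr).
\]
Dividing by $\omega_{m-1}\rho^{m-1}$ and using $x_0 \notin Z_2^i$ gives $\Theta^*_{m-1}(\abs{Du} \restrict (\Gamma^{x_0})^+; x_0) = 0$; the same argument with $(\Gamma^{x_0})^-$ gives the rest of \eqref{eq:theta-du-gammaxi}. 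The main obstacle is to notice this clean route: one need not decompose $Du$ into its absolutely continuous, Cantor and jump parts at all, but can treat $\abs{Du}$ as a whole via mutual singularity with $\H^{m-1} \restrict \Gamma_i$, provided one exploits that — after the Kirszbraun extension — $\Gamma^{x_0}$ literally coincides with $\Gamma_{i(x_0)}$ near $x_0$, so that the open halves $(\Gamma^{x_0})^\pm$ genuinely avoid $\Gamma_{i(x_0)}$ there; the Lebesgue-point claim and the differentiation inputs are then standard.
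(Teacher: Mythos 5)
Your proof is correct, and for the key density claim it takes a genuinely different route from the paper's. The paper invokes \cite[Theorem 2.83 \& Theorem 3.77]{ambrosio2000fbv} to get the two identities $\Theta_{m-1}(\abs{Du} \restrict \Gamma_i; x)=\abs{u^+(x)-u^-(x)}=\Theta_{m-1}(\abs{Du}; x)$ at $\H^{m-1}$-\ae $x \in \Gamma_i$, and then obtains \eqref{eq:theta-du-gammaxi} by subtraction, using additivity of the density over the disjoint decomposition $\Omega = \Gamma_i \union \Gamma_i^+ \union \Gamma_i^-$ and finiteness of the total density. You instead bypass the jump-density identities entirely: you observe that $\abs{Du} \restrict (\Omega \setminus \Gamma_i)$ and $\H^{m-1} \restrict \Gamma_i$ are mutually singular, apply the Besicovitch derivation theorem to that pair, and convert the resulting ratio limit into an $(m-1)$-density statement via the elementary bound $\H^{m-1}(\Gamma_i \isect \B(x,\rho)) \le \sqrt{1+L_i^2}\,\omega_{m-1}\rho^{m-1}$. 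Both arguments rest on Besicovitch, but yours is more self-contained (it needs neither the exact value of the jump density nor the finiteness of $\Theta_{m-1}(\abs{Du};x)$) and yields the conclusion at $\H^{m-1}$-\ae point of all of $\Gamma_i$ rather than only of $J_u \isect \Gamma_i$ -- a slight strengthening that costs nothing; the paper's version is shorter given the cited results and additionally records the density value, which is reused elsewhere. You also make explicit the bookkeeping the paper leaves implicit, namely that after Kirszbraun extension $\Gamma^{x_0}$ coincides with $\Gamma_{i(x_0)}$ inside $\B(x_0,\rho)$ for small $\rho$, so that $(\Gamma^{x_0})^\pm \isect \B(x_0,\rho)$ genuinely avoids $\Gamma_{i(x_0)}$. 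One minor remark: for the later use of the Lebesgue-point property (in Lemma \ref{lemma:phi-approx-1} one averages $\abs{u^\pm(x)-u^\pm(x_0)}$ over $\Gamma \isect U_r$ against $\H^{m-1}\restrict\Gamma$), it is cleaner to apply your Besicovitch argument to $\H^{m-1}\restrict\Gamma_i$ with the traces defined on all of $\Gamma_i$ by the BV trace theorem, rather than to $\H^{m-1}\restrict(J_u \isect \Gamma_i)$; this is a one-word change and does not affect correctness of the lemma as stated.
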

\begin{proof}
    For $\H^{m-1}$-\ae $x \in \Gamma_i$, we have both
    \[
        \Theta_{m-1}(\abs{Du} \restrict \Gamma_i; x)=\abs{u^+(x)-u^-(x)},
    \]
    and
    \[
        \Theta_{m-1}(\abs{Du}; x)=\abs{u^+(x)-u^-(x)},
    \]
    as follows from \cite[Theorem 2.83 \& Theorem 3.77]{ambrosio2000fbv} and, for the latter, a simple application of the former and the Besicovitch derivation theorem.
    Since
    \[
        \Theta^*_{m-1}(\abs{Du} \restrict (\Gamma_i^+ \union \Gamma_i^-); x) \le \Theta_{m-1}(\abs{Du}; x) < \infty
    \]
    we see using the definition of $\Theta^*_{m-1}$ that
    \[
        \Theta^*_{m-1}(\abs{Du} \restrict (\Gamma_i^+ \union \Gamma_i^-); x)
        +
        \Theta_{m-1}(\abs{Du} \restrict \Gamma_i; x)
        =
        \Theta_{m-1}(\abs{Du}; x).
    \]
    Therefore
    \[
        \Theta^*_{m-1}(\abs{Du} \restrict (\Gamma_i^+ \union \Gamma_i^-); x) = 0.
    \]
    This gives \eqref{eq:theta-du-gammaxi} for $\H^{m-1}$-ae $x \in J_u$. Finally $\H^{m-1}$-\ae $x \in J_u$ is a Lebesgue point of the traces $u^\pm$, so clearly the set $Z_u$ satisfying the claims exists.
\end{proof}

\subsection{The effect of the shift transformation on the fidelity}
\label{sec:fidestim}

\newcommand{\newu}[1][\rho]{\bar u_{#1,r}}
\newcommand{\newuz}[1][\rho]{\bar u_{#1,r,0}}
\newcommand{\newur}[1][\rho]{\bar u_{#1}}
\newcommand{\newuzr}[1][\rho]{\bar u_{#1,0}}
\newcommand{\gammarhoh}[1][\rho]{\gammax{#1}{h}{r}}
\newcommand{\gammarhohNEG}{\gammarhoh[-\rho]}

Recalling the definition of $\tilde J_f$ in \eqref{eq:tilde-js-u},
we now fix $x_0 \in J_u \setminus (\tilde J_f \union S_f \union Z_u)$. We also
let $\Gamma=\Gamma^{x_0}$, observing from the construction above
that the traces of $u$ from both sides of $\Gamma$ exist
at $x_0$ and agree with $u^\pm(x_0)$. We also recall the construction 
of Lemma \ref{lemma:gamma-rho-h-r}. We are given a base Lipschitz function
$\baseh \in \basehspace$
with $0 \le \baseh \le 1$. (Sometimes we only assume $-1 \le h \le 1$. 
This is explicitly stated.) We set
\[
    h_r(v) \defeq r h((v-\vp{x_0})/r),
\]
and define the Lipschitz transformations 
$\gammax{\rho}{h}{r}: \Omega \to \Omega$ by
\[
    \gammax{\rho}{h}{r}(x) \defeq \vp{x} + \tilde \gamma_{3r,\rho h_r}(\tp{x}; \vp{x}) z_\Gamma,
    \quad (-1 < \rho < 1,\, 0 < r < r_0).
\]
These transformations are the identity outside
\[
    U_r \defeq x_0 + z_\Gamma^\perp \isect \B(0, r) + (3 + \lip f_\Gamma) (-r, r) z_\Gamma,
\]
which we split into the halves
\[
    U_r^\pm = U_r \isect \Gamma^\pm.
\]
Here we choose $r>0$ small enough that $U_r \subset \Omega$.
We observe and put in our mind for later that
\[
    U_r \subset \B(x_0, Cr)
    \quad\text{for}\quad C=\sqrt{1 + (3+\lip f_\Gamma)^2}.
\]

We pick arbitrary $\theta \in [0, 1]$ as well as $r \in (0, r_0)$ 
and $\rho \in (0, 1)$. With these we define
\[
    \newu(x) \defeq \theta u(x) + (1-\theta) \pf\gammarhoh u(x),
\]
illustrated in Figure \ref{fig:unew}, as well as the piecewise constant functions
\[
    \begin{aligned}
    f_0 & \defeq \tilde f(x_0), \\
    u_0 & \defeq u^+(x_0) \chi_{U_r^+}+ u^-(x_0) \chi_{U_r^-}, \quad \text{and} \\
    \newuz & \defeq \theta u_0(x) + (1-\theta) \pf\gammarhoh u_0(x).
    \end{aligned}
\]
We aim to prove that for suitable $(\theta, \rho, r)$, either
$\newu[\rho]$  or $\newu[-\rho]$ is better than $u$. We do
this by averaging estimates over the two piecewise constant
functions. The interpolation parameter $\theta$ will in
particular be necessary to take advantage of the 
$p$-increase (or strict convexity) of $\phi$ for $p>1$.

\begin{figure}
    \centering
    \includegraphics{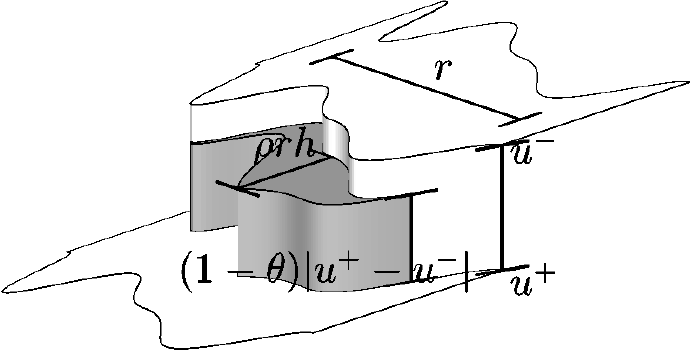}
    \caption{Illustration of the function $\newu(x) \defeq \theta u(x) + (1-\theta) \pf\gammarhoh u(x)$ (gray) constructed in §\ref{sec:fidestim} in comparison to $u$ (white).}
    \label{fig:unew}
\end{figure}

First, we have to estimate the discrepancy between the piecewise 
constant functions and the original ones. Without assuming
$u$ bounded, we could get an $O(\epsilon r^m)$ estimate. We are not
able to deal with this in the general analysis. We therefore have
to assume $u$ (locally) bounded, in order to improve this to
$O(\epsilon \rho r^m)$.

\begin{lemma}
    \label{lemma:phi-approx-1}
    Suppose $\phi$ is $p$-increasing with $1 \le p < \infty$.
    Suppose either $u, f \in \BVspace(\Omega) \isect L^\infty_{\loc}(\Omega)$,
    or that $p=1$ and $u, f \in \BVspace(\Omega)$.
    Let $x_0 \in J_u \setminus (\tilde J_f \union S_f \union Z_u)$.
    Given $\epsilon>0$, there exists $r_0>0$, independent of $\rho$
    and valid for every $\baseh \in \basehspace$ 
    with $-1 \le \baseh \le 1$, such that whenever $0<r<r_0$ and $-1 < \rho < 1$, then
    \begin{equation}
        \label{eq:phi-estim-limits}
        \begin{split}
        \int_{\Omega} \phi(\newu(x) & -f(x)) \d x - \int_{\Omega} \phi(u(x) -f(x)) \d x
        \\&
        \le
        \int_{U_r} \phi(\newuz(x) -f_0(x)) \d x - \int_{U_r} \phi(u_0(x) -f_0(x)) \d x 
        + \epsilon \abs{\rho} r^m.
        \end{split}
    \end{equation}
\end{lemma}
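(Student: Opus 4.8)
The plan is to collapse the left-hand side of \eqref{eq:phi-estim-limits} to integrals over $U_r$, freeze $\phi$ to a Lipschitz function on the relevant bounded range, and then split the remaining error into a \emph{transport} term handled by Lemma~\ref{lemma:f-transform-estim} and a \emph{lens} term supported in a thin slab about $\Gamma=\Gamma^{x_0}$. Since $\gammarhoh$ is the identity outside $U_r$ (Lemma~\ref{lemma:gamma-rho-h-r}), also $\newu=u$ there, so both integrals on the left of \eqref{eq:phi-estim-limits} reduce to integrals over $U_r$, and it suffices to bound
\[
    E \defeq \int_{U_r}\Bigl([\phi(\newu-f)-\phi(u-f)]-[\phi(\newuz-f_0)-\phi(u_0-f_0)]\Bigr)\d x
\]
by $\epsilon\abs{\rho}r^m$. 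Shrinking $r$ so that $U_r\subset\B(x_0,Cr)$ sits inside a neighbourhood of $x_0$ on which $u,f$ are bounded, and using that $\gammarhoh$ maps $U_r$ onto itself so that $\pf\gammarhoh u$ keeps the bound of $u$, every argument of $\phi$ occurring in $E$ lies in one fixed compact interval, on which $\phi$ has a Lipschitz constant $L=L(\norm{u}_{L^\infty},\norm{f}_{L^\infty},C_\phi,p)$, using $\phi'(t)\le C_\phi\abs{t}^{p-1}$. When $p=1$ boundedness is unnecessary, since $\abs{\phi(x)-\phi(y)}\le C_\phi\abs{x-y}$ holds globally and one takes $L=C_\phi$.

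Writing $\sigma\defeq(1-\theta)(\pf\gammarhoh u-u)$ and $\sigma_0\defeq(1-\theta)(\pf\gammarhoh u_0-u_0)$, so that $\newu=u+\sigma$ and $\newuz=u_0+\sigma_0$, I would decompose the integrand of $E$ as
\[
    \bigl[\phi(u+\sigma-f)-\phi(u+\sigma_0-f)\bigr]+\Bigl(\bigl[\phi(u+\sigma_0-f)-\phi(u-f)\bigr]-\bigl[\phi(u_0+\sigma_0-f_0)-\phi(u_0-f_0)\bigr]\Bigr).
\]
The first bracket is bounded by $L\abs{\sigma-\sigma_0}=L(1-\theta)\abs{\pf\gammarhoh w-w}$ with $w\defeq u-u_0\in\BVspace(\Omega)$, and Lemma~\ref{lemma:f-transform-estim} yields $\int_{U_r}\abs{\pf\gammarhoh w-w}\le M_{\gammarhoh}\abs{Dw}(U_r)=\abs{\rho}r\,\abs{D(u-u_0)}(U_r)$, which is where the decisive factor $M_{\gammarhoh}=\abs{\rho}r$ enters. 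The second part vanishes outside $S\defeq\{\pf\gammarhoh u_0\ne u_0\}\isect U_r$ (there $\sigma_0=0$ and both brackets are zero), and on $S$ it is a second difference $[\phi(a_1+\sigma_0)-\phi(a_2+\sigma_0)]-[\phi(a_1)-\phi(a_2)]$ with $a_1=u-f$, $a_2=u_0-f_0$, hence bounded pointwise by $2L\abs{a_1-a_2}\le2L(\abs{u-u_0}+\abs{f-f_0})$. Altogether
\[
    \abs{E}\le L\abs{\rho}r\,\abs{D(u-u_0)}(U_r)+2L\int_S\bigl(\abs{u-u_0}+\abs{f-f_0}\bigr)\d x.
\]

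It remains to show that both summands are $o(\abs{\rho}r^m)$ as $r\downto0$, with the implied rates depending only on $u,f,x_0,\Gamma$ and not on $\rho\in(-1,1)$, $\theta\in[0,1]$, or $\baseh$ with $\abs{\baseh}\le1$ (for all such $\baseh$ one has $M_{\gammarhoh}=\abs{\rho}r$, and $S$ is contained in a slab of half-width $\abs{\rho}r$ about $\Gamma\isect U_r$). For the first, $D(u-u_0)$ agrees with $Du$ on $U_r^+\union U_r^-$, whose mass is $o(r^{m-1})$ by $\Theta^*_{m-1}(\abs{Du}\restrict(\Gamma^{x_0})^\pm;x_0)=0$ and $U_r\subset\B(x_0,Cr)$ (Lemma~\ref{lemma:z-u}); on $\Gamma\isect U_r$ the jump parts of $u$ and $u_0$ differ by at most $\int_{\Gamma\isect U_r}(\abs{u^+-u^+(x_0)}+\abs{u^--u^-(x_0)})\d\H^{m-1}=o(r^{m-1})$ since $x_0$ is a Lebesgue point of the traces $u^\pm$ and a density point of $J_u$ in $\Gamma$, while the $D^a$- and $D^c$-parts carry no mass on $\Gamma$; thus $\abs{D(u-u_0)}(U_r)=o(r^{m-1})$. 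For the slab term I would slice $U_r$ along $z_\Gamma$: on each slice write $u$ as its one-sided limit at the graph point plus a variation term over a $t$-interval of length at most $M_{\gammarhoh}$, integrate, and reassemble via Fubini to get $\int_{S\isect(\Gamma^{x_0})^\pm}\abs{u-u^\pm(x_0)}\le M_{\gammarhoh}\bigl(o(r^{m-1})+\abs{Du}(\B(x_0,Cr)\isect(\Gamma^{x_0})^\pm)\bigr)=M_{\gammarhoh}\cdot o(r^{m-1})$, the first $o(r^{m-1})$ from the Lebesgue-point property of $u^\pm$, the second from $\Theta^*_{m-1}(\abs{Du}\restrict(\Gamma^{x_0})^\pm;x_0)=0$; the analogous estimate for $f$ uses that $x_0\notin S_f\union\tilde J_f$ yields approximate continuity of $f$ at $x_0$, vanishing of $\Theta^*_{m-1}(\abs{Df}\restrict(\Gamma^{x_0})^\pm;x_0)$, and the Lebesgue-point property of the trace of $f$. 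Summing, $\abs{E}=o(\abs{\rho}r^m)$, so $\abs{E}\le\epsilon\abs{\rho}r^m$ once $r<r_0$ for a suitable $r_0=r_0(\epsilon,u,f,x_0,\Gamma,\phi)$, which is \eqref{eq:phi-estim-limits}.

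The step I expect to be the main obstacle is the last one: turning the qualitative facts that $x_0$ is a ``good'' point (approximate continuity of $f$, vanishing one-sided $(m-1)$-densities of $\abs{Du}$ and $\abs{Df}$, Lebesgue points of the traces) into the quantitative slab estimate $\int_S(\abs{u-u_0}+\abs{f-f_0})=o(\abs{\rho}r^m)$ — a bound that must see not merely that $u$ and $f$ are close to their limits on $U_r$, but that this closeness carries the extra gain $\abs{\rho}r$ matching the width of the slab. This is exactly what the slicing argument together with the density properties in Lemma~\ref{lemma:z-u} and the hypotheses $x_0\notin\tilde J_f\union S_f$ are designed to supply, and it is also the only place (when $p>1$) where the local boundedness of $u$ and $f$ is genuinely needed, namely to keep $\phi$ Lipschitz on a fixed interval rather than only locally.
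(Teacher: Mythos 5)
Your proof is correct and follows essentially the same route as the paper's: both reduce to $U_r$, use $p$-increase plus local boundedness to treat $\phi$ as Lipschitz on the relevant range, control the transport error via Lemma~\ref{lemma:f-transform-estim} with $M_{\gamma}=\abs{\rho}r$, and control the lens contribution by slicing along $z_\Gamma$ together with the density and Lebesgue-point properties supplied by Lemma~\ref{lemma:z-u} and the hypotheses $x_0\notin\tilde J_f\union S_f$. The only difference is bookkeeping: you split algebraically into a transport term for $w=u-u_0$ over all of $U_r$ plus a second difference supported on the lens, whereas the paper first splits $U_r$ into the regions $\tilde W_r^\pm$ and $W_r^\pm$ and estimates each — the resulting bounds are the same.
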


\begin{proof}
    We may without loss of generality assume $\rho>0$, since the case $\rho=0$
    is trivial, and the case $\rho<0$ can be handled by negating $\baseh$.
    We begin the proof by choosing $r>0$ small enough that $U_r \subset \Omega$.
    Further, if $p>1$, we pick $r>0$ small enough that $u$ and $f$ 
    are essentially bounded within $U_r$.
    Since $\gammarhoh$ is the identity outside $U_r$, it
    suffices to perform estimation with $U_r$.
    We do this with distinct arguments within the $O(\rho r^m)$ 
    sets $W_r^+ \defeq U_r^+ \isect \gamma(U_r^-)$ and 
    $W_r^- \defeq U_r^- \isect \gamma(U_r^+)$, 
    and within the $O(r^m)$ sets
    $\tilde W_r^+ \defeq U_r^+ \isect \gamma(U_r^+)$ and
    $\tilde W_r^- \defeq U_r^- \isect \gamma(U_r^-)$.
    We begin with the latter, observing that
    $\newuz(x)=u_0(x)$ for $x \in \tilde W_r^\pm$.
    Thus
    \begin{equation}
        \label{eq:tilde-wpm-estim0}
        \int_{\tilde W^\pm} \phi(\newuz(x) -f_0(x)) - \phi(u_0(x) -f_0(x)) \d x = 0.
    \end{equation}
    We estimate using Definition \ref{def:phi-increase} for $\L^m$-\ae $x \in \tilde W^\pm$ that
    \[
        \begin{split}
        \phi(\newu(x) -f(x)) & - \phi(u(x) -f(x))
        \\
        &
        \le
        C_\phi
        \left(\abs{\newu(x) -f(x)} - \abs{u(x) -f(x)}\right)
        \abs{\newu(x) -f(x)}^{p-1}
        \\
        &
        \le
        C_\phi(1-\theta) \abs{\pf\gammarhoh u(x) -u(x)}
        \abs{\newu(x) -f(x)}^{p-1}
        \\
        &
        \le
        C \abs{\pf\gammarhoh u(x) -u(x)}.
        \end{split}
    \]
    In the final inequality  we use the boundedness of $u$, 
    or alternatively $p=1$.
    We now employ Lemma \ref{lemma:f-transform-estim} to estimate
    \[
        \begin{split}
            \int_{\tilde W_r^\pm}
            \phi(\newu(x) -f(x)) - \phi(u(x) -f(x))
            \d x
            &
            \le
            C
            \int_{\tilde W_r^\pm}
            \abs{\pf\gammarhoh u(x) -u(x)}
            \d x
            \\
            &
            \le
            C M_{\inv\gammarhoh}
            \abs{Du}(U_r^\pm).
        \end{split}
    \]
    We have $M_{\inv\gammarhoh} \le \rho r$.
    By the construction of $Z_u$, because $x_0 \in J_u \setminus Z_u$, 
    choosing $r>0$ small enough, we can enforce
    $\abs{Du}(U_r^\pm) \le \epsilon r^{m-1}/(8C)$.
    Thus
    \begin{equation}
        \label{eq:tilde-wpm-estim}
        \int_{\tilde W^\pm}
        \phi(\newu(x) -f(x)) - \phi(u(x) -f(x))
        \d x
        \le \epsilon \rho r^m/8.
    \end{equation}
    
    It remains to estimate $\phi(\newu(x) -f(x)) - \phi(u(x) -f(x))$
    on $W_r^\pm$.
    By Definition \ref{def:phi-increase}, we have
    \[
        \begin{split}
        - \phi(u(x) -f(x))
        &
        \le
        - \phi(u_0(x) -f_0(x))
        \\ & \phantom{\le}
        +
        C_\phi\bigl(\abs{u_0(x) -f_0(x)} - \abs{u(x) - f(x)}\bigr)\abs{u_0(x) -f_0(x)}^{p-1}.
        \end{split}
    \]
    Since $x \mapsto \abs{u_0(x) -f_0(x)}^{p-1}$ is bounded, it follows
    for some constant $C>0$ that
    \begin{equation}
        \label{eq:wpm-noshift-est1}
        - \phi(u(x) -f(x))
        \le
        - \phi(u_0(x) -f_0(x))
        +
        C\abs{u_0(x) -f_0(x) - u(x) + f(x)}.
    \end{equation}
    We may estimate the final term in $W_r^\pm$ by
    \begin{equation}
        \label{eq:wpm-noshift-est2}
        \begin{split}
        \int_{W_r^\pm} \abs{u_0(x) & -f_0(x) - u(x) + f(x)} \d x
        \\
        &
        \le
        \int_{W_r^\pm} \abs{u^\pm(x_0) - u(x)} \d x 
        +
        \int_{W_r^\pm} \abs{\tilde f(x_0) - f(x)} \d x.
        \end{split}
    \end{equation}
    We now recall the notation $u_{x}^{z}(t)=u(x+tz)$ for one-dimensional
    restrictions of functions of bounded variation in a direction $z$
    with $\norm{z}=1$, as well as the formula \cite{ambrosio2000fbv}
    \[
        \abs{\iprod{Du}{z}(A)}=\int_{P_z^\perp A} \abs{Du_x^z}(\{t \in \R \mid x+tz \in A\})  \d x.
    \]
    Writing $h_r=h_r^{(+)} - h_r^{(-)}$ for $h_r^{(+)}, h_r^{(-)}\ge 0$,
    we may then further estimate
    \[
        \begin{split}
        \int_{W_r^\pm} & \abs{u^\pm(x_0)  - u(x)} \d x 
        \\
        &
        =
        \int_{P_{z_\Gamma}^\perp U_r} 
            \int_{f_\Gamma(v)}^{f_\Gamma(v)\pm\rho h_r^{(\pm)}(v)}
            \abs{u^\pm(x_0) - u(v + t z_\Gamma)} \d t \d v
        \\
        &
        =
        \int_{P_{z_\Gamma}^\perp U_r} 
            \int_{0}^{\rho h_r^{(\pm)}(v)}
            \abs{u^\pm(x_0) - u(g_\Gamma(v) \pm t z_\Gamma)} \d t \d v
        \\
        &
        \le
        \int_{P_{z_\Gamma}^\perp U_r} 
            \int_{0}^{\rho h_r^{(\pm)}(v)}
            \abs{u^\pm(g_\Gamma(v)) - u(g_\Gamma(v) \pm t z_\Gamma)} + \abs{u^\pm(g_\Gamma(v))-u^\pm(x_0)} \d t \d v
        \\
        &
        \le
        \int_{P_{z_\Gamma}^\perp U_r} 
            \int_{0}^{\rho h_r^{(\pm)}(v)}
            \abs{Du_{g_\Gamma(v)}^{z_\Gamma}}(\pm[0, t]) \d t \d v
        +
        \rho r
        \int_{P_{z_\Gamma}^\perp U_r}
            \abs{u^\pm(g_\Gamma(v))-u^\pm(x_0)} \d v
        \\
        &
        \le
        \rho r
        \int_{P_{z_\Gamma}^\perp U_r}
            \abs{Du_{g_\Gamma(v)}^{z_\Gamma}}(\pm[0, \rho r]) \d v
        +
        \rho r
        \int_{\Gamma \isect U_r}
            \abs{u^\pm(x)-u^\pm(x_0)} \d x
        \\
        &
        \le
        \rho r \abs{Du}(U_r^\pm)
        +
        \rho r
        \int_{\Gamma \isect U_r}
            \abs{u^\pm(x)-u^\pm(x_0)} \d x.
        \end{split}
    \]
    In the semifinal step we have used on the second term the area formula on the
    transformation $\inv g_\Gamma=P_{z_\Gamma}^\perp$, observing that
    $\jacobian_m P_{z_\Gamma}^\perp \le 1$.
    Since $x_0 \in J_u \setminus Z_u$  is a Lebesgue point of $u^\pm$ on $\Gamma$ and
    $\Theta^*_{m-1}(\abs{Du} \restrict \Gamma^\pm; x_0)=0$, choosing $r>0$ small enough,
    we can make the final quantity less than $\rho r \cdot \epsilon r^{m-1}/(16C)$. 
    This proves an estimate on the first term on the right hand side of
    \eqref{eq:wpm-noshift-est2}. The second term we approximate by analogous
    arguments on $\tilde f(x_0)-f$, using the fact that $x_0 \not \in \tilde J_f$ for
    the term $\abs{Df}(U_r^\pm)$.
    Referring back to \eqref{eq:wpm-noshift-est1} and \eqref{eq:wpm-noshift-est2},
    we then deduce
    \begin{equation}
        \label{eq:untrans-wrpm}
        -\int_{W_r^\pm}
        \phi(u(x) -f(x)) \d x
        \le
        - \int_{W_r^\pm}
        \phi(u_0(x) -f_0(x)) \d x
        + \epsilon\rho r^m/8.
    \end{equation}
    
    It remains to estimate the transformed terms on $W_r^\pm$.
    Similarly to \eqref{eq:wpm-noshift-est1},
    using the essential boundedness of \emph{both} $u$ and $f$ on $U_r$,
    or $p=1$, we deduce for $\L^m$-\ae $x \in W_r^\pm$
    \begin{equation}
        \label{eq:wpm-noshift-est1-x}
        \phi(\newu(x) -f(x))
        \le
        \phi(\newuz(x) -f_0(x))
        +
        C\abs{\newuz(x) -f_0(x) - \newu(x) + f(x)}.
    \end{equation}
    A simple application of the area formula and repeating
    the arguments for the untransformed term above, yields
    again the estimate
    \begin{equation}
        \label{eq:trans-wrpm}
        \int_{W_r^\pm}
        \phi(\newu(x) -f(x)) \d x
        \le
        \int_{W_r^\pm}
        \phi(\newuz(x) -f_0(x)) \d x
        + \epsilon\rho r^m/8.
    \end{equation}
    Summing the estimates
    \eqref{eq:tilde-wpm-estim0},
    \eqref{eq:tilde-wpm-estim},
    \eqref{eq:untrans-wrpm} and \eqref{eq:trans-wrpm},
    and minding that $\newu=u$ outside $U_r$,
    we deduce \eqref{eq:phi-estim-limits}.
\end{proof}

\begin{lemma}
    \label{lemma:phi-const-approx}
    Suppose $\phi$ is $p$-increasing with $p>1$. Then there 
    exist $\theta \in (0, 1)$ and a constant
    $C = C(\phi, u^\pm(x_0), \tilde f(x_0)) > 0$ such that
    for $\rho>0$ holds
    \[
        \begin{split}
        \int_{U_r} \phi(\newuz[\rho](x) -f_0(x)) \d x 
        &
        + \int_{U_r} \phi(\newuz[-\rho](x) -f_0(x)) \d x 
        \\
        &
        - 2 \int_{U_r} \phi(u_0(x) -f_0(x)) \d x 
        \le - C \rho r^m.
        \end{split}
    \]
\end{lemma}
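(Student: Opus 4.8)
The plan is to localise everything to the two thin ``bands'' swept out by the shift transformations, to collapse the three integral terms into a single expression involving $\phi$ at two convex combinations of real numbers, and then to close the estimate by strict convexity of $\phi$.

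First I would fix coordinates so that $z_\Gamma=(0,\ldots,0,1)$ and write $x=(v,t)$ with $v=\vp{x}$, $t=\tp{x}$. By the construction of Lemma \ref{lemma:gamma-rho-h-r}, $\gammax{\rho}{h}{r}$ alters only the $t$-coordinate, through the increasing bijection $t\mapsto\tilde\gamma_{3r,\rho h_r}(t;v)$ of $(f_\Gamma(v)-3r,f_\Gamma(v)+3r)$ onto itself which sends $f_\Gamma(v)$ to $f_\Gamma(v)+\rho h_r(v)$, and it is the identity wherever $h_r$ vanishes. Since, inside $U_r$, $u_0$ equals $u^+(x_0)$ on $\{t>f_\Gamma(v)\}$ and $u^-(x_0)$ on $\{t<f_\Gamma(v)\}$, comparing the thresholds $f_\Gamma(v)$ and $f_\Gamma(v)+\rho h_r(v)$ shows that $\pf{\gammax{\rho}{h}{r}}u_0$ differs from $u_0$ exactly on the band
\[
    B_+^\rho \defeq \{(v,t)\in U_r \mid f_\Gamma(v) < t < f_\Gamma(v)+\rho h_r(v)\},
\]
where $u_0=u^+(x_0)$ while $\pf{\gammax{\rho}{h}{r}}u_0=u^-(x_0)$, because the $\gammax{\rho}{h}{r}$-preimage of such a point sits below $\Gamma$. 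Consequently $\newuz[\rho]$ coincides with $u_0$ off $B_+^\rho$ and equals the constant $\theta u^+(x_0)+(1-\theta)u^-(x_0)$ on $B_+^\rho$; symmetrically, $\newuz[-\rho]$ coincides with $u_0$ off the band $B_-^\rho \defeq \{(v,t)\in U_r \mid f_\Gamma(v)-\rho h_r(v) < t < f_\Gamma(v)\}$ and equals the constant $\theta u^-(x_0)+(1-\theta)u^+(x_0)$ on it. A Fubini computation with the substitution $w=(v-\vp{x_0})/r$, using $h_r(v)=r\baseh((v-\vp{x_0})/r)$, yields $\L^m(B_+^\rho)=\L^m(B_-^\rho)=\rho r^m I_{\baseh}$ for some constant $I_{\baseh}>0$ depending only on $\baseh$.

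Now set $A\defeq u^+(x_0)-f_0$ and $B\defeq u^-(x_0)-f_0$. Since $\newuz[\rho]$ and $\newuz[-\rho]$ each agree with $u_0$ off their respective band and are constant on it, subtracting $2\int_{U_r}\phi(u_0-f_0)\d x$ shows that the left-hand side of the claimed inequality equals
\[
    \rho r^m I_{\baseh}\bigl(\phi(\theta A+(1-\theta)B)+\phi((1-\theta)A+\theta B)-\phi(A)-\phi(B)\bigr).
\]
It therefore suffices to find $\theta\in(0,1)$ and $c_0=c_0(\phi,A,B)>0$ with $\phi(\theta A+(1-\theta)B)+\phi((1-\theta)A+\theta B)\le\phi(A)+\phi(B)-c_0$; the lemma then follows with $C\defeq I_{\baseh}\,c_0$, since $\rho>0$. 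I would take $\theta=\tfrac12$ and $c_0\defeq\phi(A)+\phi(B)-2\phi\bigl(\tfrac{A+B}{2}\bigr)$, which is nonnegative by convexity and, because $A\ne B$ (as $x_0\in J_u$), is strictly positive as soon as $\phi$ is not affine on $[A\wedge B,A\vee B]$.

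Finally I would record that a $p$-increasing $\phi$ with $p>1$ is affine on no non-degenerate interval. If $\phi$ were affine with slope $m$ on some $[c,d]\subset[0,\infty)$ with $c<d$, then applying Definition \ref{def:phi-increase} to the pairs $(x,y)$ and $(y,x)$ for $c<y<x<d$ gives $C_\phi y^{p-1}\le m\le C_\phi x^{p-1}$ throughout, hence $C_\phi d^{p-1}\le m\le C_\phi c^{p-1}$, impossible for $p>1$; an interval meeting $(-\infty,0]$ reduces to this case by the evenness of $\phi$. Thus $c_0>0$, and it depends only on $\phi$ and the numbers $u^\pm(x_0),f_0$. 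The main obstacle is really just the geometric bookkeeping of the bands — in particular verifying that $\gammax{\rho}{h}{r}$ carries a point of $B_+^\rho$ across $\Gamma$ to the $\Gamma^-$ side; the convexity argument is then routine.
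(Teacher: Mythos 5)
Your argument is correct, and it reaches the conclusion by a genuinely different (and simpler) route than the paper. Both proofs start from the same geometric reduction: the three integrals differ only on the two bands of measure $\rho r^m I_{\baseh}$ (the paper's $\hat W_r^\pm$, your $B_\pm^\rho$), on which the functions take the constant values you identify, so everything collapses to a pointwise inequality between $\phi$ evaluated at $A$, $B$ and two interpolants. From there the paper keeps $\theta$ general, linearises via the $p$-increase inequality to get the quantities $K^\pm$ and $\zeta^\pm$, and runs a case analysis on the position of $a=\tilde f(x_0)$ relative to $b^\pm$, ultimately taking $\theta$ close to $1$; this form of the estimate is what the paper recycles almost verbatim in the $p=1$ analysis (Lemma \ref{lemma:phi-approx-l1}), where only a one-sided, $\sigma$-signed bound survives. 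You instead take $\theta=\tfrac12$, so that both interpolants coincide at the midpoint and the whole expression becomes $-\bigl(\phi(A)+\phi(B)-2\phi(\tfrac{A+B}{2})\bigr)$, reducing the lemma to strict midpoint convexity of $\phi$ on $[A\wedge B, A\vee B]$; your observation that the two-sided use of Definition \ref{def:phi-increase} forces $C_\phi d^{p-1}\le m\le C_\phi c^{p-1}$ on any affine piece, which is impossible for $p>1$, is a clean way to extract the needed strictness and makes transparent exactly where $p>1$ enters (for $p=1$, e.g.\ $\phi=\abs{\cdot}$, affine pieces exist and the constant degenerates to $0$, consistent with \S\ref{sec:main-l1}). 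The only (shared) cosmetic point is that your constant $C=I_{\baseh}c_0$, like the paper's $C_\phi C_h$, also depends on the fixed profile $\baseh$, which the lemma statement suppresses.
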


\begin{figure}
    \centering
    \includegraphics{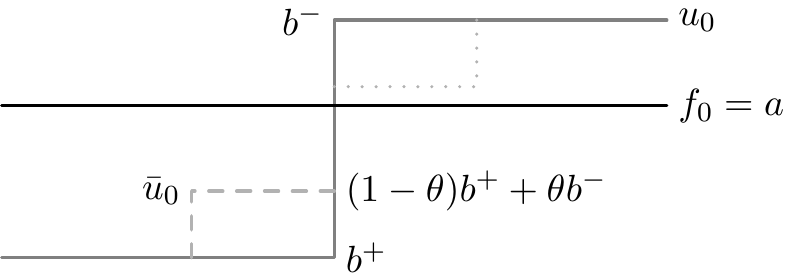}
    \caption{The situation in the proof of Lemma \ref{lemma:phi-const-approx}. Here $\bar u_0=\newuz$ for $\rho>0$ (dashed line). The dotted ine indicates the alternative $\newuz[-\rho]$ that we may need to use if the curvature of $J_u$ is high at $x_0$, as measured by the double-Lipschitz comparison condition for $R$.}
    \label{fig:ab}
\end{figure}

\begin{proof}
    Let $\rho>0$. Observe that outside
    $\hat W_r^\pm \defeq U_r^\pm \isect \gammarhoh[\pm\rho](U_r^\mp)$
    the transformed piecewise constant functions agree with the originals,
    \[
        \newuz[\pm\rho](x)=u_0(x),
        \quad
        (x \in U_r \setminus \hat W_r^\pm).
    \]
    Letting
    \[
        K^\pm \defeq \esssup_{x \in \hat W_r^\pm} \bigl(\abs{\newuz[\pm\rho](x) -f_0(x)} - \abs{u_0(x) -f_0(x)}\bigr)
    \]
    and using Definition \ref{def:phi-increase}, we derive for $x \in \hat W_r^\pm$ the estimate
    \[
        \begin{split}
        \phi(\newuz[\pm\rho](x) &-f_0(x))  - \phi(u_0(x) -f_0(x)) 
        \\ &
        \le
        C_\phi\bigl(\abs{\newuz[\pm\rho](x) -f_0(x)} - \abs{u_0(x) -f_0(x)}\bigr)
        \abs{\newuz[\pm\rho](x) -f_0(x)}^{p-1}
        \\
        &
        \le
        K^\pm C_\phi \abs{\newuz[\pm\rho](x) -f_0(x)}^{p-1}.
        \end{split}
    \]
    Overall we then have
    \begin{equation}
        \label{eq:fuz-k-estim}
        \begin{split}
        \int_{U_r} \phi(\newuz[\pm\rho](x) -f_0(x)) \d x & - \int_{U_r} \phi(u_0(x) -f_0(x)) \d x 
        \\
        &
        \le K^\pm C_\phi \int_{\hat W_r^\pm} \abs{\newuz[\pm\rho](x) -f_0(x)}^{p-1} \d x.
        \end{split}
    \end{equation}

    Let us denote $b^\pm \defeq u^\pm(x_0)$ and $a \defeq \tilde f(x_0)$. Then
    \[
        \begin{aligned}
        \abs{\newuz[\rho](x) -f_0(x)}^{p-1} & = \zeta^+ \defeq \abs{(1-\theta)(b^- -a) + \theta (b^+  - a)}^{p-1},
        \quad (x \in \hat W_r^+), \quad \text{and} \\
        \abs{\newuz[-\rho](x) -f_0(x)}^{p-1} & = \zeta^- \defeq \abs{(1-\theta)(b^+ -a) + \theta (b^-  - a)}^{p-1},
        \quad (x \in \hat W_r^-).
        \end{aligned}
    \]
    Moreover,
    \[
        \begin{aligned}
        K^+ & = \abs{(1-\theta)(b^- -a) + \theta (b^+  - a)}
                    - \abs{b^+  - a},
            \quad\text{and}
        \\
        K^- & = \abs{(1-\theta)(b^+ -a) + \theta (b^-  - a)}
                    - \abs{b^-  - a}.
        \end{aligned}
    \]
    By Lemma \ref{lemma:triangle-mod} in the appendix
    \[
        \L^m(\hat W_r^+)=\L^m(\hat W_r^-)
        =
        \int_{V_\Gamma} \rho \abs{h_r(v)} \d \H^{m-1}(v)
        =
        C_h \rho r^m
    \]
    for some constant $C=C_h$. Thus summing \eqref{eq:fuz-k-estim} for $\pm\rho$ gives
    \[
        \begin{split}
        \int_{U_r} \phi(\newuz[\rho](x) -f_0(x)) \d x
        &
        +
        \int_{U_r} \phi(\newuz[-\rho](x) -f_0(x)) \d x
        \\
        &
        \phantom{-}
        - 2 \int_{U_r} \phi(u_0(x) -f_0(x)) \d x 
        \\
        &
        \le C_\phi C_h \rho r^m
        ( K^+ \zeta^+ + K^- \zeta^-).
        \end{split}
    \]
    In order to reach our conclusion, it therefore remains to show that
    \begin{equation}
        \label{eq:k-zeta-neg}
        K^+ \zeta^+ + K^- \zeta^- < 0.
    \end{equation}
    We calculate
    \begin{equation}
        \label{eq:kplusest}
        K^+ \le (1-\theta)(\abs{b^- - a} - \abs{b^+ - a}),
    \end{equation}
    and
    \begin{equation}
        \label{eq:kminusest}
        K^- \le (1-\theta)(\abs{b^+ - a} - \abs{b^- - a}).
    \end{equation}
    Therefore
    \[
        K^+ \zeta^+ + K^- \zeta^-
        \le
        (1-\theta)(\abs{b^- - a} - \abs{b^+ - a})(\zeta^+ - \zeta^-).
    \]
    We concentrate on the case $b^+  <  b^-$ with $a \ge (b^+ + b^-)/2$, 
    other cases handled analogously by appropriate changes of roles and
    negations. This is illustrated in Figure \ref{fig:ab}. Then
    \begin{equation}
        \label{eq:bplusminusa}
        \abs{b^+-a} \ge \abs{b^- - a}.
    \end{equation}
    If \eqref{eq:bplusminusa} holds strictly, 
    we have $\zeta^+ > \zeta^-$ for $\theta \in (0, 1)$ large enough.
    This shows \eqref{eq:k-zeta-neg} and is the only place where we need 
    the assumption $p>1$. If \eqref{eq:bplusminusa} does
    not hold strictly, i.e., $a = (b^+ + b^-)/2$, we
    may have $\zeta^+ = \zeta^-$, but observe that $K^+=0$ and
    \eqref{eq:kplusest} holds strictly
    for large $\theta$. Indeed, this is the case whenever
    $b^+ < a, b^-$, because some interpolation
    $(1-\theta)b^- + \theta b^+$ is always closer to $a$ than $b^+$ is.
    We have thus proved \eqref{eq:k-zeta-neg}, and may conclude
    the proof of the lemma.
\end{proof}

\begin{lemma}
    \label{lemma:phi-approx}
    Suppose $\phi$ is $p$-increasing with $1 < p < \infty$, and
    \emph{both} $u, f \in \BVspace(\Omega) \isect L^\infty_\loc(\Omega)$.
    Let $x_0 \in J_u \setminus (\tilde J_f \union S_f \union Z_u)$.
    Then there exist $\theta \in (0, 1)$, $r_0>0$, independent of $\rho$,
    and a constant $C = C(\phi, u^\pm(x_0), \tilde f(x_0)) > 0$,
    such that whenever $0<r<r_0$ and $0 < \rho < 1$, it holds
    \begin{equation}
        \label{eq:phi-approx}
        \int_{\Omega} \phi(\newu[\rho](x) -f(x)) \d x 
        +
        \int_{\Omega} \phi(\newu[-\rho](x) -f(x)) \d x 
        - 2\int_{\Omega} \phi(u(x) -f(x)) \d x
        \le - C \rho r^m.
    \end{equation}
\end{lemma}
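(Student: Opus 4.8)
The plan is to obtain \eqref{eq:phi-approx} by combining the two preceding lemmas with essentially no further work: Lemma \ref{lemma:phi-approx-1} controls the error committed in replacing $u$ and $f$ by the piecewise constant functions $u_0$ and $f_0$ on the shrinking neighbourhood $U_r$, while Lemma \ref{lemma:phi-const-approx} supplies the genuine $O(\rho r^m)$ decrease for those piecewise constant functions after averaging the forward shift $\gammarhoh[\rho]$ against the backward shift $\gammarhoh[-\rho]$.

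Concretely, I would first invoke Lemma \ref{lemma:phi-const-approx} to fix, once and for all, an interpolation parameter $\theta \in (0,1)$ together with a constant $C' = C'(\phi, u^\pm(x_0), \tilde f(x_0)) > 0$ such that
\[
    \int_{U_r} \phi(\newuz[\rho] -f_0) + \int_{U_r} \phi(\newuz[-\rho] -f_0) - 2 \int_{U_r} \phi(u_0 -f_0) \le - C' \rho r^m
\]
for all $\rho>0$ and all admissible $r$. With $\theta$ now frozen (so that $\newu[\rho]$ and $\newu[-\rho]$ are determined), I apply Lemma \ref{lemma:phi-approx-1} with $\epsilon \defeq C'/4$, obtaining an $r_0>0$ independent of $\rho$. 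Using that lemma once for the shift of magnitude $+\rho$ and once for the shift of magnitude $-\rho$ — the latter being permitted since Lemma \ref{lemma:phi-approx-1} is stated for $-1 < \rho < 1$, and $|{-\rho}| = \rho$ — and adding the two resulting inequalities, while noting that $U_r$, $u_0$, $f_0$ do not depend on the sign of $\rho$, yields
\[
    \int_{\Omega} \phi(\newu[\rho] -f) + \int_{\Omega} \phi(\newu[-\rho] -f) - 2 \int_{\Omega} \phi(u -f) \le \left(\int_{U_r} \phi(\newuz[\rho] -f_0) + \int_{U_r} \phi(\newuz[-\rho] -f_0) - 2 \int_{U_r} \phi(u_0 -f_0)\right) + 2\epsilon \rho r^m .
\]

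Substituting the bound from Lemma \ref{lemma:phi-const-approx} into the parenthesised term and using $2\epsilon = C'/2$ gives a total bound of $(-C' + C'/2)\rho r^m = -(C'/2)\rho r^m$, so the claim holds with $C \defeq C'/2$. The only point that needs a moment's attention is the order in which the constants are chosen — $\theta$ must come from Lemma \ref{lemma:phi-const-approx} before $r_0$ is extracted from Lemma \ref{lemma:phi-approx-1}, since the definition of $\newu$ depends on $\theta$ — but this causes no difficulty, because the $r_0$ of Lemma \ref{lemma:phi-approx-1} is uniform over all base functions $\baseh$ with $-1 \le \baseh \le 1$ and the estimates there involve $\theta$ only through the harmless factor $(1-\theta) \le 1$. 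There is thus no substantive obstacle: all of the analytic content resides in the two lemmas already established.
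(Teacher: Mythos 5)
Your proposal is correct and is exactly the paper's argument: the paper's proof of this lemma is a one-line combination of Lemma \ref{lemma:phi-approx-1} and Lemma \ref{lemma:phi-const-approx} with $\epsilon$ chosen small enough in the former. Your additional care about the order of fixing $\theta$ before extracting $r_0$ is a correct and worthwhile clarification of a point the paper leaves implicit.
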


\begin{proof}
    Combine Lemma \ref{lemma:phi-approx-1} and Lemma \ref{lemma:phi-const-approx},
    choosing $\epsilon>0$ small enough in the former.
\end{proof}

\subsection{The effect of the shift transformation on the regulariser}
\label{sec:regestim}

We now summarise the estimates we get for the regulariser $R$
using double-Lipschitz comparability and the Lipschitz transformations
of \S\ref{sec:lipschitz}. 
\begin{lemma}
    \label{lemma:r-approx}
    Suppose $R$ is a double-Lipschitz comparable and $x_0 \in J_u \setminus Z_u$.
    Then there exists a constant $C=C(u, x_0)$ and $r_0>0$ such that
    for $0 < r < r_0$ and $0< \rho < 1$ holds
    \[
        R(\newu[\rho]) + R(\newu[-\rho]) - 2R(u)
        \le C\rho^2 r^{m-1}.
    \]
\end{lemma}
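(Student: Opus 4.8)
The plan is to reduce, via the convexity of $R$, to the quantity $R(\pf{\gammax{\rho}{h}{r}}u) + R(\pf{\gammax{-\rho}{h}{r}}u) - 2R(u)$, to bound this by the double-Lipschitz comparability of Definition~\ref{def:lipschitz-trans} applied with $U = U_r$, and finally to estimate $\abs{Du}(\closure{U_r})$ by $O(r^{m-1})$ using the finite $(m-1)$-dimensional density of $\abs{Du}$ at the jump point $x_0 \in J_u \setminus Z_u$.

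\textbf{Step 1 (convexity).} Since $\newu[\rho] = \theta u + (1-\theta)\pf{\gammax{\rho}{h}{r}} u$ with $\theta \in [0,1]$ and $R$ is convex, Jensen's inequality gives $R(\newu[\pm\rho]) \le \theta R(u) + (1-\theta)R(\pf{\gammax{\pm\rho}{h}{r}} u)$. Summing the two and subtracting $2R(u)$, the terms $2\theta R(u) - 2R(u) = -2(1-\theta)R(u)$ yield
\[
    R(\newu[\rho]) + R(\newu[-\rho]) - 2R(u)
    \le (1-\theta)\bigl(R(\pf{\gammax{\rho}{h}{r}} u) + R(\pf{\gammax{-\rho}{h}{r}} u) - 2R(u)\bigr).
\]

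\textbf{Step 2 (comparability).} By Lemma~\ref{lemma:gamma-rho-h-r}, $\gammax{\pm\rho}{h}{r} \in \LipClass(\Omega, U_r)$ and $\bitransfull{\gammax{\rho}{h}{r}}{\gammax{-\rho}{h}{r}} \le C_0\rho^2$; for $\rho$ in the regime that is used later this is $<1$, so Definition~\ref{def:lipschitz-trans} applies with $U = U_r$ and gives
\[
    R(\pf{\gammax{\rho}{h}{r}} u) + R(\pf{\gammax{-\rho}{h}{r}} u) - 2R(u)
    \le \RCa\,\bitransfull{\gammax{\rho}{h}{r}}{\gammax{-\rho}{h}{r}}\,\abs{Du}(\closure{U_r})
    \le \RCa C_0\,\rho^2\,\abs{Du}(\closure{U_r}).
\]
Because the right-hand side is nonnegative and $1-\theta \le 1$, it also controls the left side of Step~1.

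\textbf{Step 3 (density estimate).} Recall $U_r \subset \B(x_0, Cr)$ with $C = \sqrt{1+(3+\lip f_\Gamma)^2}$, hence $\closure{U_r} \subset \B(x_0, 2Cr)$. Since $x_0 \in J_u \setminus Z_u$, the density identity established in the proof of Lemma~\ref{lemma:z-u}, namely $\Theta_{m-1}(\abs{Du}; x_0) = \abs{u^+(x_0) - u^-(x_0)} < \infty$ (which for points outside $Z_u$ follows from \eqref{eq:theta-du-gammaxi} together with the fact that, for small balls, $\B(x_0,\varrho)$ is split by the graph $\Gamma^{x_0}$ into the graph and its two open sides), furnishes $r_1 = r_1(u,x_0) > 0$ and $D_0 = D_0(u,x_0)$ with $\abs{Du}(\B(x_0,\varrho)) \le D_0\,\varrho^{m-1}$ for $\varrho < 2Cr_1$. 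Taking $r_0 \le r_1$, we obtain $\abs{Du}(\closure{U_r}) \le D_0(2Cr)^{m-1} =: C'' r^{m-1}$ for $0 < r < r_0$. Combining with Step~2,
\[
    R(\newu[\rho]) + R(\newu[-\rho]) - 2R(u) \le \RCa C_0 C''\,\rho^2 r^{m-1} =: C\rho^2 r^{m-1},
\]
where $C$ depends only on $u$ and $x_0$, since $\RCa = \RCa(\Omega)$ and the constants $C_0$, $C''$ are determined by $\Omega$, by $f_\Gamma = f_{\Gamma^{x_0}}$ and $\baseh$, and by $u^\pm(x_0)$, all fixed once $u$ and $x_0$ are. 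The only substantive point is Step~3: one must see that $\abs{Du}(\closure{U_r})$ decays like $r^{m-1}$ with a constant governed by $x_0$ alone, which is exactly what membership in $J_u \setminus Z_u$ (finite $(m-1)$-density, via \eqref{eq:theta-du-gammaxi}) provides; Steps~1 and~2 are routine uses of convexity and of the comparability hypothesis.
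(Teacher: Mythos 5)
Your proposal is correct and follows essentially the same route as the paper's proof: convexity to pass from $\newu[\pm\rho]$ to the pushforwards, the double-Lipschitz comparability combined with the $O(\rho^2)$ bound on $\bitransfull{\gammax{\rho}{h}{r}}{\gammax{-\rho}{h}{r}}$ from the shift-transformation lemma, and the finite $(m-1)$-density $\Theta_{m-1}(\abs{Du};x_0)=\abs{u^+(x_0)-u^-(x_0)}$ at $x_0\in J_u\setminus Z_u$ to get $\abs{Du}(\closure{U_r})\le C'' r^{m-1}$. Your explicit remark that one needs $\bitransfull{\gammax{\rho}{h}{r}}{\gammax{-\rho}{h}{r}}<1$ before invoking Definition \ref{def:lipschitz-trans} is a caveat the paper leaves implicit, but it is resolved exactly as you say, by the small-$\rho$ regime used downstream.
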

\begin{proof}
    We know from Lemma \ref{lemma:h-shift-gamma}
    the existence of a constant $C>0$ such that $\bitransfull{\gammarhoh}{\gammarhohNEG}$ defined in \eqref{eq:bitransfull} satisfies
    \[
        \bitransfull{\gammarhoh}{\gammarhohNEG}
        \le C \rho^2.
    \]
    By convexity
    \[
        R(\newu[\rho]) + R(\newu[-\rho]) - 2R(u)
        \le (1-\theta)\bigl(R(\pf\gammarhoh u) + R(\pf\gammarhohNEG u) - 2R(u)).
    \]
    By the double-Lipschitz comparability of $R$, we
    thus find that
    \[
        R(\newu[\rho]) + R(\newu[-\rho]) - 2 R(u) \le 
         (1-\theta) \RCa \bitransfull{\gammarhoh}{\gammarhohNEG} 
        \abs{D u}(\closure U_r).
    \]
    Since $x_0 \in J_u \setminus Z_u$, whenever $r>0$ is small 
    enough, we have for a constant $C'>0$  that
    \[
        \abs{D u}(\closure U_r))
        \le
        \abs{D u}(\B(x_0, C' r)) 
        \le
        2 \omega_{m-1} \abs{u^+(x_0)-u^-(x_0)}  (C'r)^{m-1}.
    \]
    The claim follows by combining the above estimates.
\end{proof}


\subsection{Patching it all together}

We may finally prove our main result for $p>1$ 
by combining the above lemmas as follows.

\begin{proof}[Proof of Theorem \ref{theorem:jumpset-strict}]
    Suppose, to reach a contradiction, that
    $\H^{m-1}(J_u \setminus J_f)>0$. Since $\H^{m-1}(Z_u)=0$, $\H^{m-1}(S_f \setminus J_f)=0$ with $J_f \subset S_f$, and by Proposition \ref{prop:bv-density}, $\H^{m-1}(\tilde J_f \setminus J_f)=0$ with $J_f \subset \tilde J_f$, we may select a point $x_0 \in J_u \setminus (\tilde J_f \union S_f \union Z_u)$.
    We then apply Lemma \ref{lemma:phi-approx} for the fidelity estimate
    \begin{equation}
        \label{eq:phi-approx-combine}
        \begin{split}
        \int_{\Omega} \phi(\newu[\rho](x) -f(x)) \d x 
        &
        +
        \int_{\Omega} \phi(\newu[-\rho](x) -f(x)) \d x 
        \\
        &
        - 2\int_{\Omega} \phi(u(x) -f(x)) \d x
        \le
        - C_1 \rho r^m,
        \end{split}
    \end{equation}
    where $0 < r < r_1$ and $0 < \rho < 1$.
    Lemma \ref{lemma:r-approx} gives for the regulariser 
    and $0 < r < r_2$ the estimate
    \[
        R(\newu[\rho]) - R(\newu[-\rho]) - 2R(u)
        \le C_2 \rho^2 r^{m-1}.
    \]
    All the constants $C_1, C_2, r_1, r_2>0$ are independent of $\rho \in (0, 1)$.
    Picking $0 < r < \min\{r_1, r_2\}$, and summing these estimates, we obtain
    \[
        \begin{split}
        \left(\int_{\Omega} \phi(\newu[\rho](x) -f(x)) \d x + R(\newu[\rho])\right)
        &
        +
        \left(\int_{\Omega} \phi(\newu[-\rho](x) -f(x)) \d x + R(\newu[-\rho])\right)
        \\
        &
        \phantom{\le}
         - 2\left(\int_{\Omega} \phi(u(x) -f(x)) \d x + R(u)\right)
         \\
        &
        \le
        C_2\rho^2 r^{m-1} - C_1 \rho r^m.
        \end{split}
    \]
    If $\rho>0$ is small enough, this is negative.
    Thus either
    \[
        \begin{aligned}
        \int_{\Omega} \phi(\newu[\rho](x) -f(x)) \d x + R(\newu[\rho]) & < \int_{\Omega} \phi(u(x) -f(x)) \d x + R(u), \quad \text{or}\\
        \int_{\Omega} \phi(\newu[-\rho](x) -f(x)) \d x + R(\newu[-\rho]) & < \int_{\Omega} \phi(u(x) -f(x)) \d x + R(u).
        \end{aligned}
    \]
    This contradicts the optimality of $u$.
    Therefore $\H^{m-1}(J_u \setminus J_f)=0$.
\end{proof}

\section{Considerations for the $L^1$ fidelity}
\label{sec:main-l1}

The only difficulty in extending the proofs of 
\S\ref{sec:main} for $p$-increasing ($p>1$) fidelities
$\phi$ to the $L^1$ fidelity $\phi(x)=x$,
is Lemma \ref{lemma:phi-const-approx}. We do not necessarily
get a constant $C>0$ there, but $C=0$. Then the argument
in the proof of Theorem \ref{theorem:jumpset-strict} does not
go through. This has the consequence that it is possible to
have $\H^{m-1}(J_u \setminus J_f) > 0$. However, as we
will see in this section, the residual $J_u \setminus J_f$ 
has a regular structure, although our result is somewhat
weaker than the known result for $\TV$ \cite{duval2009tvl1}.

In this section, we still define $h_r$ as before, as
\[
    h_r(v) \defeq r \baseh((v-\vp{x_0})/r),
\]
for some $0 \ne \baseh \in \basehspace$, with $0 \le \baseh \le 1$
unless explicitly stated otherwise. However, we also denote
\[
    h_r^{v_0}(v) \defeq r \baseh((v-v_0)/r),
\]
when we want to be more explicit about the base point.
Noting that $I_r > 0$ under these assumptions, we denote
for brevity
\[
    I_r \defeq \int_{V_\Gamma} h_r \d v,
\]
and
\[
    \lambda_u(x) \defeq \abs{u^+(x)-u^-(x)}.
\]

Finally, we need the missing technical curvature definition for Theorem \ref{theorem:jumpset-l1}.

\begin{definition}
    \label{def:r-curvature}
    Let $R$ be an admissible regularisation functional
    on $\BVspace(\Omega)$. We define the \emph{transformation differential}
    of $R$ at $u$ as
    \[
        \TDIFF_u^R(\gamma) \defeq \lim_{\rho \downto 0} \frac{R(\pf{(\rho \gamma+ (1-\rho)\iota)} u)-R(u)}{\rho},
        \quad (\gamma \in \LipClass(\Omega)),
    \]
    when the limit exists.
    With $\baseh \in \basehspace$, we then define the \emph{pointwise $R$-curvature}
    at $x_0$ along the Lipschitz graph $\Gamma$, if the limit exists, as
    \[
        \CURVP_u^{R,\Gamma,\baseh}(x_0) \defeq \lim_{r \downto 0} \inv I_r \TDIFF_R(\gamma_{h_r}),
        \quad (x_0 \in \Gamma).
    \]
    Since our results do not depend on the choice of $\baseh$, we 
    simply write $\CURVP_u^{R,\Gamma}(x_0) \defeq \CURVP_u^{R,\Gamma,\baseh}(x_0)$
    in the statement of Theorem \ref{theorem:jumpset-l1}.
\end{definition}

\begin{remark}
    In many ways, it would make more sense to define the pointwise
    curvature by
    \[
        \tilde \CURVP_u^{R,\Gamma,\baseh}(x_0) \defeq \lim_{r \downto 0} 
            \frac{\omega_{m-1}\TDIFF_u^R(\gammax{1}{h}{r})}
                 {I_1 r \abs{Du}(\B(x_0, r))}.
    \]
    At a point $x_0 \in J_u$ this would give the normalised
    curvature $\CURVP_u^{R,\Gamma,\baseh}(x_0)/\lambda_u(x_0)$;
    see Lemma \ref{lemma:r-curvature}.
    For practical reasons, in order to be able to work easily 
    at points $x_0 \in \Gamma \setminus J_u$,
    we use the earlier definition, however.
\end{remark}

\subsection{Expressions for the pointwise $R$-curvature}

The proofs of the curvature condition in Theorem \ref{theorem:jumpset-l1} is mainly
based on the expressions in the following two lemmas, plus the regularity results
that follows.
Since the proofs of these two lemmas are mainly technical and not very informative,
we relegate them to Appendix \ref{appendix:l1}.

The first one of the lemmas is our rough counterpart of Lemma \ref{lemma:gamma-rho-h-r} for $p>1$. The proof uses a different technique, based on only one-sided Lipschitz
estimates.

\begin{lemma}
    \label{lemma:r-curvature-sol}
    Suppose $u$ solves \eqref{eq:prob}.
    Let $\Gamma \subset \Omega$ be a Lipschitz $(m-1)$-graph.
    Then
    \[
        \CURVP_u^{R,\Gamma,\baseh}(x_0)=\lambda_u(x_0)C_\phi,
        \quad (\H^{m-1}\text{-\ae} x_0 \in (J_u \setminus J_f) \isect \Gamma).
    \]
\end{lemma}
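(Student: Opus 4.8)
The plan is to combine two opposite one-sided estimates. On the one hand, since $u$ solves \eqref{eq:prob}, comparing $u$ with the perturbed competitor $\pf{(\rho\gamma_{h_r}+(1-\rho)\iota)} u$ and dividing by $\rho$ and by $I_r$, then letting $\rho\downto 0$ and $r\downto 0$, must yield a \emph{non-negative} first variation: the fidelity cannot decrease faster than $R$ increases. On the other hand, I will compute the fidelity variation \emph{directly} and show it equals $-\lambda_u(x_0)C_\phi$ in the limit, which forces $\CURVP_u^{R,\Gamma,\baseh}(x_0)\ge \lambda_u(x_0)C_\phi$. The reverse inequality will come from using $-h_r$ in place of $h_r$ (i.e. shifting the jump the other way), where the fidelity variation is again controlled and the optimality of $u$ now bounds $\CURVP$ from above. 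The interpolation parameter $\theta$ that was needed for $p>1$ is not available here, so this time the two one-sided comparisons must be combined at the level of the \emph{derivative} in $\rho$, not by averaging two finite perturbations.

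Concretely, first I would record, using Lemma \ref{lemma:f-transform-estim} and the boundedness-free $O(\epsilon r^m)$ version of the fidelity-discrepancy argument from Lemma \ref{lemma:phi-approx-1} (we are in the $p=1$ branch, so no local boundedness of $u$ is needed), that
\[
    \lim_{r\downto 0}\inv{I_r}\lim_{\rho\downto 0}\frac{1}{\rho}\left(\int_\Omega \phi(\pf{(\rho\gamma_{h_r}+(1-\rho)\iota)}u - f)\d x - \int_\Omega \phi(u-f)\d x\right)
\]
exists and, by Definition \ref{def:phi-increase} applied slice-wise together with the fact that $x_0\notin \tilde J_f\union S_f$ and that $x_0$ is a Lebesgue point of $u^\pm$, equals $-\lambda_u(x_0)C_\phi$ exactly. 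The key computation is that moving the jump set of $u$ by $\rho h_r$ along $z_\Gamma$ changes the $L^1$-type fidelity, to leading order, by $\phi'$ evaluated on the ``worse'' side times the swept mass; using the $1$-increase inequality as an equality in the relevant direction (since $\phi(x)-\phi(y)\le C_\phi(\abs{x}-\abs{y})$ and the matching lower bound $\phi'(t)\ge C_\phi$ pin down the derivative along the segment crossing the jump). Then optimality of $u$ gives $\TDIFF_u^R(\gamma_{h_r}) \ge -[\text{fidelity derivative}]$, hence after normalising by $I_r$ and passing to the limit, $\CURVP_u^{R,\Gamma,\baseh}(x_0)\ge \lambda_u(x_0)C_\phi$.

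For the opposite inequality I would run the same argument with $\baseh$ replaced by $-\baseh$ (equivalently $h_r$ by $-h_r$); note that the competitor is still admissible because $\gamma_{-h_r}\in\LipClass(\Omega;U_r)$ by Lemma \ref{lemma:h-shift-gamma}, and the transformation differential is \emph{linear} in the ``direction'' $h$ to leading order — more precisely $\TDIFF_u^R(\gamma_{-h_r}) + \TDIFF_u^R(\gamma_{h_r})$ is the quantity controlled by the double-Lipschitz comparison constant, which Lemma \ref{lemma:h-shift-gamma} shows is $O(\rho^2)$, hence contributes nothing after dividing by $\rho$ and $I_r$ and letting $r\downto 0$. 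Thus $\TDIFF_u^R(\gamma_{-h_r}) \to -\CURVP_u^{R,\Gamma,\baseh}(x_0)\cdot I_r$-normalised, while the fidelity derivative for the reversed shift is $+\lambda_u(x_0)C_\phi$ in the limit at $\H^{m-1}$-\ae $x_0 \in (J_u\setminus J_f)\isect\Gamma$ — here the hypothesis $x_0\notin J_f$ is essential, since it guarantees that near $x_0$ the data $f$ has no jump and the fidelity responds symmetrically to shifting $u$'s jump in either direction. Optimality again gives a one-sided bound, and the two bounds together force equality.

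The main obstacle I expect is making the slice-wise fidelity derivative computation rigorous \emph{uniformly enough in $r$} that the double limit $\lim_{r\downto 0}\inv{I_r}\lim_{\rho\downto 0}(\cdots)$ exists and equals $-\lambda_u(x_0)C_\phi$ on the nose, rather than merely up to $o(1)$ errors — in particular controlling the contribution of $\abs{Du}$ and $\abs{Df}$ on the thin swept regions $W_r^\pm$ using the density facts from Lemma \ref{lemma:z-u} (that $x_0\notin Z_u$), and checking that the $1$-increase inequality is tight in the limit so that no constant is lost. The clean separation of ``fidelity side'' (handled by $\phi$ being exactly $1$-increasing and the Lebesgue-point/density properties at $x_0$) from ``regulariser side'' (handled purely through $\TDIFF_u^R$ and the $O(\rho^2)$ cancellation of opposite shifts) is what makes the equality, rather than an inequality, come out.
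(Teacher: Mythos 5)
Your overall architecture --- two one-sided optimality comparisons with opposite shifts, and the $O(\rho^2)$ double-Lipschitz cancellation to convert a lower bound for $\gamma_{\rho h}$ into an upper bound for $\gamma_{-\rho h}$ --- is the same as the paper's. But there is a genuine gap at the heart of the plan: the claim that the fidelity first variation of the \emph{pure} pushforward $\pf{\gammax{\rho}{h}{r}}u$ equals exactly $-\lambda_u(x_0)C_\phi I_r$ is false. In the swept region the pushforward replaces the trace $b^+ \defeq u^+(x_0)$ by $b^-\defeq u^-(x_0)$, so the leading-order fidelity change per unit swept volume is $\phi(b^--a)-\phi(b^+-a)$ with $a=\tilde f(x_0)$; for $\phi=\abs{\cdot}$ this is $\abs{b^--a}-\abs{b^+-a}$, which vanishes when $a=(b^++b^-)/2$ and in general has magnitude $\bigabs{\abs{b^--a}-\abs{b^+-a}}$ rather than $\abs{b^+-b^-}=\lambda_u(x_0)$. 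The $1$-increasing inequality is not ``tight along the segment crossing the jump'', and no Lebesgue-point or density argument at $x_0$ repairs the constant. Your assertion that the interpolation parameter $\theta$ ``is not available'' for $p=1$ is exactly backwards: the paper's Lemma \ref{lemma:phi-approx-l1} keeps $\theta$ and compares $u$ with $\theta u+(1-\theta)\pf{\gammax{\rho}{h}{r}}u$. Interpolating from the \emph{farther} trace toward $a$ decreases $\abs{{\cdot}-a}$ at the full rate, giving $K^+\le-(1-\theta)\abs{b^+-b^-}$ while $K^-\le(1-\theta)\abs{b^+-b^-}$, hence a fidelity decrease of $(1-\theta)\rho C_\phi\lambda_u(x_0)I_r$ up to $\epsilon\rho r^m$; convexity of $R$ then puts the matching factor $(1-\theta)$ in front of the regulariser increase, so the two sides remain comparable.

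A secondary issue: you assume the fidelity derivative and the double limit defining $\CURVP_u^{R,\Gamma,\baseh}(x_0)$ exist and can be computed on the nose, uniformly enough to interchange limits. The paper does not need this; it works with the dichotomy of Lemma \ref{lemma:r-curvature} (either $\CURVP_u^{R,\Gamma,\baseh}(x_0)=\inv\alpha$, or along sequences $r_j\downto 0$, $\rho_{j,i}\downto 0$ the regulariser change violates the rate $\inv\alpha\rho I_r$ by a definite factor $\kappa$ or $\inv\kappa$), and derives a contradiction with optimality in either violating case using only the one-sided estimate \eqref{eq:phi-approx-l1} (valid for both signs of $\rho$) together with the $O(\rho^2 r^{m-1})$ transfer. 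Exact equality then falls out of the dichotomy rather than from an exact first-variation computation. If you repair the fidelity estimate with the $\theta$-interpolation, you should also replace the ``compute the derivative exactly'' step by this $\limsup$/$\liminf$ argument, since the existence of the limits is part of what is being proved.
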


For the second one of the curvature lemmas, giving a more familar mean curvature
expression for the pointwise $R$-curvature, we require our rather strong assumption
that $Du$ is ``essentially piecewise constant'' around $J_f \setminus J_u$.

\begin{lemma}
    \label{lemma:div-curv}
    Let $\Gamma \subset \Omega$ be a Lipschitz $(m-1)$-graph.
    At $\H^{m-1}$-\ae point $x_0 \in \Gamma$, if
    $\Theta_m(\abs{Du} \restrict \Omega \setminus \Gamma; x_0)=0$, then
    \[
        \CURVP_u^{R,\Gamma,\baseh}(x_0) 
        =
        -\divergence \frac{\DIFFSS f_{\Gamma}(\pgp{x_0})}{\sqrt{1+\norm{\DIFFSS f_{\Gamma}(\pgp{x_0})}^2}}
        \cdot
         \RCs \lambda_u(x_0).
    \]
\end{lemma}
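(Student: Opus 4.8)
The plan is to differentiate $R$ twice along the shift transformations of \S\ref{sec:lipschitz}: first in the magnitude $\rho$, to compute $\TDIFF_u^R(\gamma_{h_r})$, and then in the scale $r$. Fix a point $x_0\in\Gamma$ at which $\Theta_m(\abs{Du}\restrict(\Omega\setminus\Gamma);x_0)=0$ and which is, in addition, a Lebesgue point of $\grad f_\Gamma$, a Lebesgue point of the one-sided traces $u^\pm$ along $\Gamma$, and a point of $\H^{m-1}\restrict\Gamma$-density one of $\Gamma\isect J_u$; $\H^{m-1}$-a.e.\ point of $\Gamma$ has these properties, by Rademacher's theorem, the BV trace theorem and rectifiability. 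Write $v_0\defeq\pgp x_0$, $\ell\defeq\lambda_u(x_0)$ (with the convention $\lambda_u\defeq 0$ off $J_u$), $\psi_\Gamma\defeq\grad f_\Gamma/\sqrt{1+\norm{\grad f_\Gamma}^2}$, and, for $-1<\rho<1$, $u_\rho\defeq\pf{\gammax{\rho}{h}{r}}u$, $\Gamma_\rho\defeq\gammax{\rho}{h}{r}(\Gamma)$.

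Recalling $\gammax{\rho}{h}{r}=\rho\gamma_{h_r}+(1-\rho)\iota$ we have $\TDIFF_u^R(\gamma_{h_r})=\lim_{\rho\downto 0}\rho^{-1}(R(u_\rho)-R(u))$. Applying separable double-Lipschitz comparability (Definition \ref{def:lipschitz-trans}) to the pair $(\gammax{\rho}{h}{r},\iota)$ on $U_r$ with the graph $\Gamma$ --- valid since $\bitransfull{\gammax{\rho}{h}{r}}{\iota}\le C\abs\rho<1$ for small $\rho$ by Lemma \ref{lemma:gamma-rho-h-r} --- gives the upper bound
\[
    R(u_\rho)-R(u)\le\RCa\bitransfull{\gammax{\rho}{h}{r}}{\iota}\abs{Du}(\closure U_r\setminus\Gamma)+\RCs\bigl(\abs{Du_\rho}(\Gamma_\rho)-\abs{Du}(\Gamma)\bigr).
\]
Applying the same comparison in reverse --- to $u_\rho$, the graph $\Gamma_\rho$ and the pair $(\inv{\gammax{\rho}{h}{r}},\iota)$, using $\pf{\inv{\gammax{\rho}{h}{r}}}u_\rho=u$ and the standard pushforward bound $\abs{Du_\rho}(\closure U_r\setminus\Gamma_\rho)\le C\abs{Du}(\closure U_r\setminus\Gamma)$ (uniform in $\rho$, as $\lip\gammax{\rho}{h}{r}\le 1+C\abs\rho$) --- produces a matching lower bound. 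Dividing by $\rho$ and sending $\rho\downto 0$,
\[
    \bigabs{\TDIFF_u^R(\gamma_{h_r})-\RCs\,\delta_r}\le C\RCa\abs{Du}(\closure U_r\setminus\Gamma),\qquad \delta_r\defeq\lim_{\rho\downto 0}\tfrac{1}{\rho}\bigl(\abs{Du_\rho}(\Gamma_\rho)-\abs{Du}(\Gamma)\bigr).
\]

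Next I identify $\delta_r$. Since $\Gamma_\rho$ is exactly the graph of $f_\Gamma+\rho h_r$ over $V_\Gamma$ and $\gammax{\rho}{h}{r}$ carries $\Gamma\isect J_u$ bijectively onto $\Gamma_\rho\isect J_{u_\rho}$ preserving the jump height over each base point $v$ (it remains $\lambda_u(g_\Gamma(v))$), the area formula for graphs and differentiation under the integral (licit as $\grad f_\Gamma,\grad h_r\in L^\infty$ and $\lambda_u\circ g_\Gamma\in L^1$) give $\abs{Du_\rho}(\Gamma_\rho)=\int_{V_\Gamma}\lambda_u(g_\Gamma(v))\sqrt{1+\norm{\grad f_\Gamma(v)+\rho\grad h_r(v)}^2}\d v$ and hence
\[
    \delta_r=\int_{V_\Gamma}\lambda_u(g_\Gamma(v))\,\iprod{\psi_\Gamma(v)}{\grad h_r(v)}\d v,
\]
i.e.\ $\delta_r$ is the first variation of the jump-height-weighted area of $\Gamma$ in the direction $h_r$. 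Now divide by $I_r$ and let $r\downto 0$: as $I_r\ge c\,r^m$ while $\abs{Du}(\closure U_r\setminus\Gamma)\le\abs{Du}(\B(x_0,Cr)\setminus\Gamma)=o(r^m)$ by the density hypothesis, the error above disappears, so $\CURVP_u^{R,\Gamma,\baseh}(x_0)=\RCs\lim_{r\downto 0}I_r^{-1}\delta_r$ once this limit is known to exist. Writing $\delta_r=\ell\int_{V_\Gamma}\iprod{\psi_\Gamma}{\grad h_r}\d v+\int_{V_\Gamma}(\lambda_u\circ g_\Gamma-\ell)\iprod{\psi_\Gamma}{\grad h_r}\d v$, the first term over $I_r$ converges to $\ell$ times the pointwise curvature of $\Gamma$ at $x_0$ --- by which we mean $\lim_{r\downto 0}\bigl(-I_r^{-1}\int_{V_\Gamma}\iprod{\psi_\Gamma}{\grad h_r}\d v\bigr)$, denoted $-\divergence\frac{\grad f_\Gamma(v_0)}{\sqrt{1+\norm{\grad f_\Gamma(v_0)}^2}}$ and equal to the classical expression of \eqref{eq:jumpset-l1-curvature} when $f_\Gamma\in C^{2,\allLambda}$ near $x_0$. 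This yields the asserted identity (and, en passant, the existence of the limit defining $\TDIFF_u^R(\gamma_{h_r})$ at such $x_0$).

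The hard part --- and why the argument is technical and confined to Appendix \ref{appendix:l1} --- is to show that the fluctuation term $I_r^{-1}\int_{V_\Gamma}(\lambda_u\circ g_\Gamma-\ell)\iprod{\psi_\Gamma}{\grad h_r}\d v$ vanishes in the limit (together with the parallel task of extracting a pointwise value of $\divergence\psi_\Gamma$ from its scale-$r$ averages): since $f_\Gamma$ is merely Lipschitz and $\lambda_u\circ g_\Gamma$ merely $L^1$, the crude bound $\norm{\grad h_r}_\infty\int_{\B(v_0,Cr)}\abs{\lambda_u\circ g_\Gamma-\ell}=o(r^{m-1})$ is not $o(I_r)=o(r^m)$. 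One must instead pair the test function $h_r$ itself (of size $\sim r$), not $\grad h_r$, against the relevant measures, using $\int\grad\baseh=0$ and the representation $\lambda_u\,\H^{m-1}\restrict\Gamma=\abs{Du}\restrict\Gamma$ (the absolutely continuous and Cantor parts of $Du$ vanish on the rectifiable set $\Gamma$) to rewrite $\delta_r$ against the push-forward measure $(\pgp)_{\#}(\abs{Du}\restrict\Gamma)$, whose scale-$r$ behaviour at $\H^{m-1}$-a.e.\ $x_0$ is governed by the density hypothesis --- which confines the bulk variation of $u$ near $\Gamma$ and forces the jump height $\lambda_u\circ g_\Gamma$ to be essentially locally constant there --- and by differentiation of measures. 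This is precisely where the strong ``essentially piecewise constant'' hypothesis $\Theta_m(\abs{Du}\restrict(\Omega\setminus\Gamma);x_0)=0$ and the restriction to $\H^{m-1}$-a.e.\ $x_0$ are used.
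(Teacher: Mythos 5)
Your proposal retraces the paper's proof essentially step for step. The two-sided application of separable double-Lipschitz comparability --- to $(\gammax{\rho}{h}{r},\iota)$ acting on $u$ with graph $\Gamma$, and in reverse to $(\inv{\gammax{\rho}{h}{r}},\iota)$ acting on $u_\rho$ with graph $\Gamma_\rho$ --- is exactly the paper's Lemma \ref{lemma:curvp-d-pf}, and your identification of $\delta_r$ as the first variation $\curvf_{\lambda,f}(h_r)$ of the $\lambda_u$-weighted graph area is Lemma \ref{lemma:triangle-mod} combined with a convexity/concavity sandwich in $\rho$ (in place of your differentiation under the integral sign); the density hypothesis is used in both texts only to kill the $\abs{Du}(\closure U_r\setminus\Gamma)$ error after division by $I_r$. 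The single point of divergence is the final localization. The paper does \emph{not} carry out the refined pairing you announce in your last paragraph: it restricts to points where $\inv{g_\Gamma}(x_0)$ is a Lebesgue point of $\lambda_u\circ g_\Gamma$ and of $\DIFFSS f_\Gamma$, reads the divergence exactly as you do (as $\lim_{r\downto 0}\curvf_{1,f}(h_r/I_r)$), and declares the replacement of $\lambda$ by $\lambda_u(x_0)$ immediate. Your observation that the crude Lebesgue-point estimate gives only $o(r^{m-1})$ against the required $o(I_r)=o(r^m)$ is a fair criticism of that one-liner, but note that (a) the repair you sketch --- integrating by parts so as to pair $h_r$ itself against $\divergence(\lambda\,\psi_\Gamma)$ --- would need that divergence to be a measure, which does not follow from the hypotheses (the assumption $\Theta_m(\abs{Du}\restrict\Omega\setminus\Gamma;x_0)=0$ constrains only the off-graph mass, not the regularity of $\lambda_u$ along $\Gamma$), and (b) the paper supplies no such argument either. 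So to match the published proof you should simply close with the Lebesgue-point assertion rather than promising machinery that is not actually deployed; as written, your ``hard part'' is a step the source dispatches in one sentence, not a gap you alone have failed to fill.
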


\subsection{Regularity}

The following result will allow us to obtain additional regularity from
the expression for the pointwise $R$-curvature in Lemma \ref{lemma:div-curv}.

\begin{lemma}
    \label{lemma:regul}
    Let $f \in W^{1,\infty}(V)$, and $0 \le \lambda \in L^\infty(V)$,
    where $V \subset z^\perp \isect \B(0, r_0)$, $z \in \R^m$, $r_0 > 0$. 
    Suppose for some $\alpha>0$ we have
    \begin{equation}
        \label{eq:regul-curvature}
        -\divergence \frac{\DIFFSS f(v)}{\sqrt{1+\norm{\DIFFSS f(v)}^2}}=\inv\alpha,
    \end{equation}
    for $\H^{m-1}$-\ae $v \in V$. Then
    \eqref{eq:regul-curvature} holds for every $v \in V$,
    and $f \in C^{2,\allLambda}(V)$.
\end{lemma}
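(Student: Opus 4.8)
The plan is to read \eqref{eq:regul-curvature} as a uniformly elliptic quasilinear equation in divergence form and to run the classical elliptic bootstrap; the nonnegative function $\lambda$ plays no role in this statement and may be ignored. Set $a(p)\defeq p/\sqrt{1+\norm{p}^2}$ for $p\in\R^{m-1}$, so that \eqref{eq:regul-curvature} becomes $-\divergence a(\DIFFSS f)=1/\alpha$, understood in the weak sense
\[
    \int_V \iprod{a(\DIFFSS f(v))}{\DIFFSS\varphi(v)}\d v = \frac{1}{\alpha}\int_V \varphi(v)\d v,
    \qquad (\varphi\in C_c^\infty(V));
\]
this is what \eqref{eq:regul-curvature} amounts to once one notes $a(\DIFFSS f)\in L^\infty(V;\R^{m-1})$ (indeed $\norm{a(p)}<1$ always). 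The map $a$ is $C^\infty$, and on $\set{\norm p\le L}$ with $L\defeq\norm{\DIFFSS f}_{L^\infty(V)}<\infty$ its differential $Da(p)$ is symmetric with eigenvalues in $[(1+L^2)^{-3/2},(1+L^2)^{-1/2}]$; hence, thanks to $f\in W^{1,\infty}(V)$, the equation is \emph{uniformly} elliptic. This a priori gradient bound is the only use of the $W^{1,\infty}$ hypothesis, and it is exactly what removes the degeneracy of the minimal surface operator at infinite slope.

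First I would upgrade $\DIFFSS f$ to a locally Hölder field. Fix $B\Subset V$. Applying the difference-quotient method to the weak formulation, for $\abs s$ small and $k\in\{1,\dots,m-1\}$ the quotient $\Delta_s^k f(v)\defeq s^{-1}(f(v+se_k)-f(v))$ is a weak solution on $B$ of the linear equation $\divergence(A_s\DIFFSS\Delta_s^k f)=0$, where $A_s(v)\defeq\int_0^1 Da\bigl(t\DIFFSS f(v+se_k)+(1-t)\DIFFSS f(v)\bigr)\d t$ is bounded, measurable, and uniformly elliptic on $B$ by the preceding paragraph. By the De Giorgi--Nash--Moser theorem, $\{\Delta_s^k f\}_s$ is bounded in $C^{0,\beta}(B')$ for $B'\Subset B$ and some $\beta\in(0,1)$ depending only on $m$ and $L$; letting $s\to0$ gives $\partial_k f\in C^{0,\beta}_\loc(V)$, i.e.\ $f\in C^{1,\beta}_\loc(V)$.

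Next, $\bar A(v)\defeq Da(\DIFFSS f(v))$ is now itself in $C^{0,\beta}_\loc$ and uniformly elliptic, and each $\partial_k f$ solves the linear equation $\divergence(\bar A\,\DIFFSS\partial_k f)=0$; interior Schauder estimates give $\partial_k f\in C^{1,\beta}_\loc(V)$, i.e.\ $f\in C^{2,\beta}_\loc(V)$. Since $a\in C^\infty$ and the right-hand side $1/\alpha$ is constant, this iterates: if $f\in C^{k,\beta}_\loc(V)$ then $\bar A\in C^{k-1,\beta}_\loc$ and Schauder gives $f\in C^{k+1,\beta}_\loc(V)$. Hence $f\in C^\infty_\loc(V)$, so in particular $f\in C^{2,\allLambda}(V')$ for every $V'\Subset V$ — which is the asserted regularity (the conclusion being an interior statement). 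Finally, $f\in C^2$ makes $v\mapsto-\divergence\frac{\DIFFSS f(v)}{\sqrt{1+\norm{\DIFFSS f(v)}^2}}$ continuous and equal to $1/\alpha$ for $\H^{m-1}$-\ae $v$, hence equal to $1/\alpha$ for \emph{every} $v\in V$.

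The main obstacle is the first step, the passage from a bounded weak solution to a Hölder-continuous gradient: this is precisely De Giorgi--Nash--Moser for linear divergence-form equations with bounded measurable coefficients, applied to the difference quotients (equivalently, to the formally differentiated equation). Everything afterwards is a routine Schauder iteration, powered by the smoothness of $p\mapsto p/\sqrt{1+\norm p^2}$ and the constancy of the right-hand side.
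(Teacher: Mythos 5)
Your proof is correct, but it takes a genuinely different route from the paper's. The paper argues variationally: the weak form of \eqref{eq:regul-curvature} is the Euler--Lagrange equation of the strictly convex area-type functional $\Jfunct$, so $f$ is its unique minimiser subject to its own boundary data; a quartic penalty $\psi$, active only for gradients beyond $2\sup\norm{\DIFFSS f}$, is then added so that the modified integrand satisfies the structure conditions of \cite[Theorem 15.19]{gilbarg2001elliptic}, that existence theorem produces a $C^{2,\lambda}$ solution of the Dirichlet problem, and uniqueness of minimisers identifies it with $f$. You instead run the direct interior bootstrap: the bound $\norm{\DIFFSS f}_{L^\infty(V)}<\infty$ makes the minimal surface operator uniformly elliptic on the relevant gradient range, difference quotients plus De Giorgi--Nash--Moser give $f\in C^{1,\beta}_\loc(V)$, and Schauder iteration then gives $C^\infty_\loc(V)$. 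Both arguments hinge on the same fact -- the Lipschitz bound removes the degeneracy of the operator at infinite slope -- and your uniform-ellipticity observation plays exactly the role of the paper's truncation $\psi$. Your route is more self-contained and sidesteps the boundary hypotheses that the cited existence theorem formally requires (the paper does not verify that $\BD V$ and $f|\BD V$ are $C^{2,\lambda}$), at the price of delivering only interior regularity, $f\in C^{2,\allLambda}(V')$ for $V'\Subset V$; since the paper applies the lemma on small balls around interior points, that is what is actually used. Two cosmetic points: the eigenvalues of $Da(p)$ over $\norm{p}\le L$ range up to $1$ (attained at $p=0$), not $(1+L^2)^{-1/2}$, which changes nothing; and in the Schauder step you should note that the difference-quotient argument also yields the Caccioppoli bound $f\in W^{2,2}_\loc(V)$, which is what lets you read $\partial_k f$ itself, rather than its quotients, as a weak solution of the linearised equation.
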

    
\begin{proof}
    Since \eqref{eq:regul-curvature} holds almost everywhere,
    also
    \begin{equation}
        \label{eq:curvconst-no-lambda}
        \int_V \adaptiprod{\frac{\DIFFSS f(v)}{\sqrt{1+\norm{\DIFFSS f(v)}^2}}}{\DIFFSS h(v)} \d v
        = \inv\alpha \int_V h(v) \d v,
        \quad
        (h \in W_0^{1,\infty}(V)).
    \end{equation}
    After showing $C^2$ regularity of $f$, 
    \eqref{eq:regul-curvature} then holds for all $v \in V$. 
    To show this, let us start by defining
    \[
        \Jfunct(q) \defeq
        \int_V \sqrt{1+\norm{\DIFFSS q(x)}^2} \d x
        -\inv\alpha \int_V q(x)-f(x) \d x,
    \]
    and consider the problem
    \begin{equation}
        \label{eq:regul-min}
        \min \{\Jfunct(q) \mid q \in Q\},
    \end{equation}
    where the domain
    \[
        Q \defeq \{q \in W^{1,\infty}(V) \mid q|\BD V = f|\BD V\}.
    \]
    Since $t \mapsto \sqrt{1+t^2}$ is strictly convex,
    and the weak solution $h \in W^{1,\infty}_0(V)$ to the 
    differential equation
    \[
        \DIFFSS h=\DIFFSS \tilde h \text{ on } V,
        \quad
        h|\BD V =0,
    \]
    is unique for any $\tilde h \in W^{1,\infty}_0(V)$,
    we find that $\Jfunct$ is strictly convex on $Q$.
    It therefore has a unique minimiser.
    But \eqref{eq:curvconst-no-lambda} is exactly the necessary
    and sufficient optimality condition for $f$ to be
    a minimiser of $\Jfunct$. This can be deduced from
    the following paragraphs studying a slightly modified
    functional $\hat\Jfunct$. At this moment it is important
    to notice that $f$ must then be the unique minimiser of $\Jfunct$.
    
    We intend to use the regularity results on quasilinear elliptic partial
    differential equations to show that $f$ satisfies the claimed
    regularity properties. To do this, we however need to force some 
    coercivity/ellipticity properties. We therefore define
    \[
        \hat \Jfunct(q)
        \defeq
        \int_V g(\DIFFSS q(x)) \d x - \inv \alpha \int_V\bigl( u(x)-f(x)\bigr) \d x,
    \]
    where 
    \[
        g(p) \defeq
            \sqrt{1+\norm{p}^2} 
            +
            \sum_{i=1}^m \psi(\abs{p_i}),
    \]
    and
    \[
        \psi(t)=C_0 \bigl(\max\{t-K, 0\}\bigr)^4
    \]
    for $K \defeq 2\sup_{x \in V} \norm{\DIFFSS f(x)}$, and
    yet undetermined $C_0 > 0$.
    Observe that $\psi \in C^3(\R)$.
    Now $\Jfunct \le \hat \Jfunct$ and $\hat \Jfunct(f)=\Jfunct(f)$,
    so that $\hat \Jfunct$ also has the unique minimiser $f$ within $U$.
    
    We now write the optimality conditions for potential minimisers
    $q \in Q$ of $\hat \Jfunct$. As differentials of the mappings
    \[
        \rho \mapsto \hat \Jfunct(q+\rho h),
        \quad (h \in W^{1, \infty}_0(V)),
    \]
    we calculate
    \[
        \int_V
            \iprod{\grad g(\DIFFSS (q+\rho h)(x))}{\DIFFSS h(x)}
            \d x
        - \inv \alpha \int_V h(x) \d x.
    \]
    If $q$ minimises $\hat \Jfunct$, we obtain by setting $\rho=0$
    the optimality condition
    \[
        \int_V
            \iprod{\grad g(\DIFFSS q(x))}{\DIFFSS h(x)}
            \d x
        = 
        \inv \alpha \int_V h(x) \d x,
        \quad (h \in W^{1, \infty}_0(V)),
    \]
    where
    \[
        \grad g(p) = \frac{p}{\sqrt{1+\norm{p}^2}} + 
            \sum_{i=1}^m \psi'(\abs{p_i})(\sign p_i)e_i.
    \]
    It follows that $q$ is a solution of the
    quasilinear elliptic differential equation
    \begin{equation}
        \label{eq:regul-de}
        \divergence A(\DIFFSS q) = -\inv \alpha,
        \quad
        q \in W^{1, \infty}(V), 
        \quad
        q|\BD V = f|\BD V,
    \end{equation}
    where
    \[
        A(p)=(A_1(p),\ldots,A_m(p)) \defeq \grad g(p).
    \]
    We want to show that \eqref{eq:regul-de}
    has a solution $\hat q \in C^{2,\lambda}(V)$ for any $\lambda \in (0, 1)$.
    But $\hat \Jfunct$ had the unique minimiser $f$. 
    Therefore $f=\hat q \in C^{2,\lambda}(V)$ for any $\lambda \in (0, 1)$.
    From \eqref{eq:regul-de} and the definition of $\psi$ we have
    moreover that \eqref{eq:regul-curvature} holds.
    
    To show the existence of a solution $\hat q \in C^{2,\lambda}(V)$ ,
    we employ \cite[Theorem 15.19]{gilbarg2001elliptic}. To do so, we 
    need to show that
    \begin{equation}
        \label{eq:ell-cond-1}
        A_i \in C^{1,\lambda}(\R^m),\quad (i=1,\ldots,m),
    \end{equation}
    that for some $\tau > - 1$ and $C_1, C_2 \in \R$ both
    \begin{equation}
        \label{eq:ell-cond-2}
        \iprod{p}{A(p)} \ge \norm{p}^{2+\tau} - C_1,
    \end{equation}
    and
    \begin{equation}
        \label{eq:ell-cond-3}
        \norm{\grad A(p)} \le C_2 (1+\norm{p})^\tau.
    \end{equation}
    It is to force these conditions, why we introduced
    the $\psi$-penalty in $g$.
    Minding that $\psi \in C^3(\R)$, 
    we have $A \in C^2(\R^m; \R^m)$, whence
    condition \eqref{eq:ell-cond-1} readily follows.
    Moreover, we have for some $C_0, C_1 \ge 0$ that
    \[
        \sum_{i=1}^m \psi'(p_i)\abs{p_i}
        \ge C_0\sum_{i=1}^m \abs{p_i}^4 - C_1
        \ge \norm{p}^4 - C_1,
    \]
    so that
    \[
        \iprod{p}{A(p)}
        = 
        \frac{\norm{p}^2}{\sqrt{1+\norm{p}^2}} + 
            \sum_{i=1}^m \psi'(p_i)\abs{p_i}
        \ge
        \norm{p}^4 - C_1.
    \]
    Thus \eqref{eq:ell-cond-2} holds with $\tau=2$.
    It remains to show that \eqref{eq:ell-cond-3} also
    holds with $\tau=2$.
    We have
    \[
        \grad A(p)=\frac{I}{\sqrt{1+\norm{p}^2}}
            -\frac{p \otimes p}{(1+\norm{p}^2)^{3/2}}
            +\sum_{i=1}^m \psi''(\abs{p_i}) e_i \otimes e_i.
    \]
    The first two terms are bounded, while for
    for some $C_3, C_4 > 0$ we have the estimate
    \[
        \abs{\psi''(\abs{p_i})} \le C_3 + C_4 p_i^2.
    \]
    Therefore, for some $C_5, C_2 > 0$, we find that
    \[
        \norm{\grad A(p)} \le C_5 + C_4 \norm{p}^2
        \le C_2(1+\norm{p})^2.
    \]
    This proves \eqref{eq:ell-cond-3}, concluding the proof.
\end{proof}

\subsection{Proof of the main result for $p=1$}

We may now prove our main result for 1-increasing fidelities
by combining the results of the above lemmas.

\begin{proof}[Proof of Theorem \ref{theorem:jumpset-l1}]
    Let $\{\Gamma_i\}_{i=1}^\infty$ be the collection of
    Lipschitz graphs from \eqref{eq:ju-lambda-collection}.
    Repeated application of Lemma \ref{lemma:r-curvature-sol} 
    for each $\Gamma=\Gamma_i$, ($i \in \Z^+$) shows that
    \begin{equation}
        \label{eq:jumpset-l1-proof-curv-1}
        \CURVP_u^{R,\Gamma_i,\baseh}(x_0)=\lambda_u(x_0)C_\phi
        \quad\text{for}\quad
        \H^{m-1}\text{-\ae}  x_0 \in (J_u \setminus J_f) \isect \Gamma_i.
    \end{equation}
    Choosing $\Lambda_i=\Gamma_i$ proves \eqref{eq:jumpset-l1-r-curvature}.

    It remains to prove the higher regularity \eqref{eq:jumpset-l1-curvature}
    under assumptions of approximate piecewise constancy.
    We fix $\Gamma=\Gamma_i$ for some $i \in \Z^+$.
    By assumption $\Theta_m(\abs{Du} \restrict \Omega \setminus \Gamma; x_0)=0$
    at $\H^{m-1}$-\ae $x_0 \in \Gamma$. By Lemma \ref{lemma:div-curv} we therefore
    have at $\H^{m-1}$-\ae $x_0 \in \Gamma$ the expression
    \[
        \CURVP_u^{R,\Gamma,\baseh}(x_0) 
        =
        -\divergence \frac{\DIFFSS f_{\Gamma}(\pgp{x_0})}{\sqrt{1+\norm{\DIFFSS f_{\Gamma}(\pgp{x_0})}^2}}
        \cdot
        \lambda_u(x_0).
    \]
    We plan to use Lemma \ref{lemma:regul}. This depends on 
    the domain $V_{\Gamma}$ being open, and \eqref{eq:jumpset-l1-proof-curv-1}
    holding $\H^{m-1}$-\ae on $\Gamma=\Gamma_i$, instead of just on $ (J_u \setminus J_f) \isect \Gamma_i$.  We therefore need a tiny extra argument before the application of the lemma.
    
    We recollect from the proof of Lemma \ref{lemma:div-curv} the expression
    \[
        \CURVP_u^{R,\Gamma,\baseh}(x_0) 
        =\lim_{r \downto 0} \curvf_{1,f}(h_r/I_r) \cdot \lambda_u(x_0)
    \]
    for
    \[
        \curvf_{1,f}(h) \defeq \int_V \adaptiprod{\frac{\DIFFSS f_\Gamma(v)}{\sqrt{1+\norm{\DIFFSS f_\Gamma(v)}^2}}}{\DIFFSS h(v)} \d v.
    \]
    The functional $\curvf_{1,f}$ is continuous
    on $H_0^1(V_\Gamma)$. From this, it is not difficult to
    see that each of the sets
    \[
        W_{k,r} \defeq \{ v \in V_\Gamma \mid \abs{\curvf_{1,f}(h_r^v/I_r) - C_\phi} \ge 1/k \},
        \quad (k \in \Z^+,\, r > 0).
    \]
    is closed within $V_\Gamma$. Therefore so are the sets
    \[
        W_k^\sigma \defeq \Isect_{0<r<\sigma} W_{k,r},
        \quad
        (k \in \Z^+,\, \sigma > 0),
    \]
    consisting of points $v \in V_\Gamma$ where
    \[
        \liminf_{r \downto 0} \abs{\curvf_{1,f}(h_r^v/I_r) - C_\phi} \ge 1/k.
    \]
    
    Suppose $\ri W_k^\sigma \ne \emptyset$.
    However $\H^{m-1}(\ri W_k^\sigma \isect (J_u \setminus J_f))=0$,
    because otherwise we would find a set of positive mass of points
    $x_0 \in J_u \setminus J_f$ where
    $\CURVP_u^{R,\Gamma,\baseh}(x_0) \ne \lambda_u(x_0)C_\phi$,
    and reach a contradiction to \eqref{eq:jumpset-l1-proof-curv-1}.
    Because we could always replace $\Gamma$ by $\Gamma \setminus g_\Gamma(\closure \ri  W_k^\sigma)$,
    we may therefore
    assume that $\ri W_k^\sigma = \emptyset$ for every $k \in \Z^+$
    and $\sigma>0$. In particular every $W_k^\sigma$ is nowhere dense in $V_\Gamma$.
    The Baire category theorem then says that
    \[
        V_\Gamma' \defeq \Isect_{k=1}^\infty \Isect_{j=1}^\infty (V_\Gamma \setminus W_k^{1/j})
        = \Isect_{k=1}^\infty \Isect_{j=1}^\infty \Union_{0<r<1/j} (V_\Gamma \setminus W_{k,r}).
    \]
    is a dense open set within $V_\Gamma$. But every $v \in V_\Gamma'$
    satisfies for some $r_j \downto 0$ the limit
    \[
        \lim_{j \to \infty} \curvf_{1,f}(h_{r_j}^v/I_{r_j}) = C_\phi.
    \]
    Considering that $\lim_{r \downto 0} \curvf_{1,f}(h_{r}^v/I_{r})$
    exists at a Lebesgue point of $\DIFFSS f_\Gamma$, we deduce that
    $\CURVP_u^{R,\Gamma,\baseh}(g_\Gamma(v))=\lambda_u(g_\Gamma(v))C_\phi$
    for $\H^{m-1}$-\ae $v \in V_\Gamma'$. Moreover, as required
    \[
        \H^{m-1}(J_u \setminus (J_f \union \Lambda))=0
    \]
    for $\Lambda = \Union_{i=1}^\infty \Lambda_i$
    with $\Lambda_i \defeq g_\Gamma(V_{\Gamma_i}')$.
    Finally, we refer to Lemma \ref{lemma:regul} (on a small ball 
    around any given point $v \in V_{\Lambda_i}$) to 
    conclude the regularity of $\Lambda_i$ under the assumption that
    $\Theta_m(\abs{Du} \restrict \Omega \setminus \Lambda_i; x)=0$
    at $\H^{m-1}$-\ae point $x \in \Lambda_i$.
\end{proof}

\section*{Acknowledgements}

This manuscript has been prepared over the course of several years,
on the side of ``paying work''. 
While the author was at the Institute for Mathematics and Scientific 
Computing at the University of Graz, this work was financially supported 
by the SFB research program F32 ``Mathematical Optimization and Applications in 
Biomedical Sciences'' of the Austrian Science Fund (FWF). 
While at the Department of Applied Mathematics and Theoretical Physics
at the University of Cambridge, this work was financially supported by the 
King Abdullah University of Science and Technology (KAUST) Award No.~KUK-I1-007-43 
as well as the EPSRC / Isaac Newton Trust Small Grant ``Non-smooth geometric 
reconstruction for high resolution MRI imaging of fluid transport
in bed reactors'', and the EPSRC first grant Nr.~EP/J009539/1
``Sparse \& Higher-order Image Restoration''.
At the Research Center on Mathematical Modeling (Modemat) at the
Escuela Politécnica Nacional de Quito, this work has been
supported by the Prometeo initiative of the Senescyt.

The author would like to express sincere gratitude to Simon Morgan, 
Antonin Chambolle, Kristian Bredies, and Carola-Bibiane Schönlieb
for fruitful discussions. Moreover, the author is grateful to Kostas
Papafitsoros for an additional proofreading of the manuscript.

\appendix

\section{Areas and lengths under Lipschitz transformations}
\label{appendix:lip}

We state and prove the following well-known expressions
for completeness.

\begin{lemma}
    \label{lemma:triangle-mod}
    Let $\Gamma \subset \R^m$ be a Lipschitz $(m-1)$-graph.
    Suppose $x_0 \in \Gamma$ and $\delta_0 > 0$ are such
    that $\B(\inv g_\Gamma(x_0), \delta_0) \subset V_\Gamma$.
    Let $\lambda \in L^1(\Gamma)$ and
    $0 \le h \in W_0^{1,\infty}(V_\Gamma)$, 
    with $\norm{h}_{L^1(V_\Gamma)} > 0$.
    Define
    \[
        \Gamma_\rho \defeq 
        \gamma_{\rho h}(\Gamma) = \{ g_\Gamma(v) + \rho h(v) z_\Gamma \mid v \in V_\Gamma \},
        \quad
        (\rho \in \R).
    \]
    Then
    \begin{align}
        \label{eq:gamma-rho-lm}
        \L^m(\Gamma_\rho^- \isect \Gamma^+)
        +
        \L^m(\Gamma_\rho^+ \isect \Gamma^-)
        & =
        \abs{\rho} \int_{V_\Gamma} h(v) \d v, \\
        \label{eq:gamma-rho-hm11}
        \int_\Gamma \lambda(x) \d \H^{m-1}(x)
        & =
        \int_{V_\Gamma}
            \lambda(g_\Gamma(v)) \sqrt{1+\norm{\DIFFSS f_\Gamma(v)}^2} 
            \d v,
        \quad
        \text{and}
        \\
        \label{eq:gamma-rho-hm12}
        \int_{\Gamma_\rho} \lambda(x) \d \H^{m-1}(x)
        & =
        \int_{V_\Gamma \isect}
            \lambda(g_\Gamma(v)) \sqrt{1+\norm{\DIFFSS f_\Gamma(v)+\rho \DIFFSS h(v)}^2}
            \d v.
    \end{align}
\end{lemma}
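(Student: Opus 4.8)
The plan is to establish the three identities separately; each is a routine application of the area formula \eqref{eq:areaformula} together with Fubini's theorem, and no geometric subtlety beyond bookkeeping is involved. First I would normalise coordinates: after an orthogonal change of variables I may assume $z_\Gamma = e_m$, and write $x = (v,t)$ with $v \in z_\Gamma^\perp \cong \R^{m-1}$ and $t \in \R$. In these coordinates $\Gamma$ is the graph of $f_\Gamma$ over $V_\Gamma$, with $\Gamma^+ = \{(v,t) : v \in V_\Gamma,\ t > f_\Gamma(v)\}$ and $\Gamma^- = \{(v,t) : v \in V_\Gamma,\ t < f_\Gamma(v)\}$; and since $\gamma_{\rho h}(g_\Gamma(v)) = g_\Gamma(v) + \rho h(v) z_\Gamma = v + (f_\Gamma(v) + \rho h(v)) z_\Gamma$, the perturbed set $\Gamma_\rho$ is exactly the graph of $f_\Gamma + \rho h$ over $V_\Gamma$, with $\Gamma_\rho^\pm$ described in the same way.

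For \eqref{eq:gamma-rho-hm11} I would apply \eqref{eq:areaformula} with $n = m-1$, ambient space $V = z_\Gamma^\perp$, the Lipschitz map $g_\Gamma$ (extended to all of $z_\Gamma^\perp$ by Kirszbraun's theorem if needed), Borel set $\Omega = V_\Gamma$, and integrand $g = \lambda \circ g_\Gamma$. Since $g_\Gamma$ is injective, the inner sum in \eqref{eq:areaformula} has at most one term, so the only thing left is to compute the Jacobian: by Rademacher's theorem $\grad g_\Gamma(v)$ exists for $\L^{m-1}$-a.e.\ $v$ and, written as $\mathrm{id}_{z_\Gamma^\perp} + \grad f_\Gamma(v)\otimes z_\Gamma$, gives $\jacobianf{m-1}{g_\Gamma}{v} = \sqrt{\det\bigl(I + \grad f_\Gamma(v)\otimes \grad f_\Gamma(v)\bigr)} = \sqrt{1+\norm{\grad f_\Gamma(v)}^2}$, using $\det(I + w\otimes w) = 1 + \norm{w}^2$. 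This yields \eqref{eq:gamma-rho-hm11}, and \eqref{eq:gamma-rho-hm12} follows by the identical argument with $f_\Gamma$ replaced by $f_\Gamma + \rho h$ (equivalently, with $g_\Gamma$ replaced by $v \mapsto g_\Gamma(v) + \rho h(v) z_\Gamma$).

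For \eqref{eq:gamma-rho-lm}, in the normalised coordinates one has
\[
    \Gamma_\rho^- \isect \Gamma^+ = \{(v,t) : v\in V_\Gamma,\ f_\Gamma(v) < t < f_\Gamma(v)+\rho h(v)\}
\]
and
\[
    \Gamma_\rho^+ \isect \Gamma^- = \{(v,t) : v\in V_\Gamma,\ f_\Gamma(v)+\rho h(v) < t < f_\Gamma(v)\}.
\]
Since $h\ge 0$, the second set is empty whenever $\rho\ge 0$ and the first is empty whenever $\rho\le 0$; in either case Fubini's theorem applied to the non-empty set gives $\L^m$-measure $\abs{\rho}\int_{V_\Gamma} h(v)\,dv$, and, the two sets being disjoint, summing gives \eqref{eq:gamma-rho-lm}.

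There is no genuine obstacle here. The only points worth a remark are: the gradient $\grad f_\Gamma$ exists only $\L^{m-1}$-a.e., which is harmless both for the Jacobian computation and for Fubini; the area formula \eqref{eq:areaformula} is stated for an $L^1$ integrand, so to cover a general $\lambda \in L^1(\Gamma)$ one first proves \eqref{eq:gamma-rho-hm11}--\eqref{eq:gamma-rho-hm12} for $\lambda \ge 0$ by monotone convergence (truncating) and then splits $\lambda = \lambda^+ - \lambda^-$; and the auxiliary hypotheses $\B(\inv g_\Gamma(x_0),\delta_0)\subset V_\Gamma$, $h\in W_0^{1,\infty}(V_\Gamma)$ and $\norm{h}_{L^1(V_\Gamma)}>0$ are not used in proving these three identities, being needed only at the places where the lemma is later applied.
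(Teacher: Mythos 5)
Your proof is correct and follows essentially the same route as the paper's: the area formula with Jacobian $\sqrt{1+\norm{\DIFFSS f_\Gamma}^2}$ for \eqref{eq:gamma-rho-hm11}--\eqref{eq:gamma-rho-hm12} (the paper obtains the Jacobian by diagonalising $I+\DIFFSS f_\Gamma\otimes\DIFFSS f_\Gamma$ rather than via the identity $\det(I+w\otimes w)=1+\norm{w}^2$, a cosmetic difference), and Fubini's theorem applied to the explicit slab description of $\Gamma_\rho^\mp\isect\Gamma^\pm$ for \eqref{eq:gamma-rho-lm}. Your side remarks on a.e.\ differentiability, on reducing general $\lambda\in L^1(\Gamma)$ to the nonnegative case, and on the unused hypotheses are all accurate.
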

\begin{proof}
    Without loss of generality, we may assume that $x_0=0$,
    and $\inv{g_\Gamma}(x_0)=0$. 
    We thus calculate for $\rho \ge 0$ that
    \[
        \Gamma_\rho^- \isect \Gamma^+
        =
            \{ g_\Gamma(v)+\tau z_\Gamma \mid
                v \in V_\Gamma,\,
                0 < \tau < \abs{\rho} h(v) \}.
    \]
    Analogously for $\rho < 0$ we obtain
    \[
        \Gamma_\rho^- \isect \Gamma^+
        =
            \{ g_\Gamma(v)-\tau z_\Gamma \mid
                v \in V_\Gamma,\,
                0 < \tau < \abs{\rho}h(v) \}.
    \]
    Application of Fubini's theorem therefore confirms
    \eqref{eq:gamma-rho-lm}.
    
    To prove \eqref{eq:gamma-rho-hm11}, we write
    \[
        \Gamma = \{ g_\Gamma(v) \mid v \in V_\Gamma\}.
    \]
    Thus the area formula \eqref{eq:areaformula} gives
    \[
        \int_\Gamma \lambda(x) \d \H^{m-1}(x)
        =\int_{V_\Gamma}
            \lambda(g_\Gamma(v)) \jacobianf{m-1}{g_\Gamma}{v} \d v.
    \]
    Writing
    \[
        g_\Gamma(v)=v+f_\Gamma(v)z_\Gamma
    \]
    we have
    \[
        \DIFFSS g_\Gamma(v)=H^* + \DIFFSS f_\Gamma(v) \otimes z_{\Gamma},
    \]
    where $H \in \L(V_\Gamma; \R^m)$
    is the embedding operator $H(v)=v$.
    We have $H^* \circ H =I$ and $H^* z_\Gamma=0$.
    Consequently
    \[
        \DIFFSS g_\Gamma(v) \circ [{\DIFFSS g_\Gamma(v)}]^*
        =
        I+\DIFFSS f_\Gamma(v) \otimes \DIFFSS f_\Gamma(v),
    \]
    The eigenvalue of ${\DIFFSS g_\Gamma(v)}^* \circ {\DIFFSS g_\Gamma(v)}$.
    corresponding to eigenvector $\DIFFSS f_\Gamma(v)$ is
    $1+\norm{\DIFFSS f_\Gamma(v)}^2$,
    while the other eigenvalues equal $1$.
    Thus
    \[
        \jacobianf{m-1}{g_\Gamma}{v}
        =\sqrt{\det(\DIFFSS g_\Gamma(v) \circ [\DIFFSS g_\Gamma(v)]^*)}
        =\sqrt{1+\norm{\DIFFSS f_\Gamma(v)}^2},
    \]
    giving the well-known formula for $\H^{m-1}(\Gamma \setminus \Gamma_\rho)$
    and confirming \eqref{eq:gamma-rho-hm11}.
    
    The same arguments prove also \eqref{eq:gamma-rho-hm12}.
    Indeed, we have
    \[
        \Gamma_\rho = \{ q_\rho(v) \mid v \in V_\Gamma\}.
    \]
    where
    \[
        q_\rho(v) \defeq v+(f_\Gamma+h)(v) z_\Gamma,
    \]
    so the area formula \eqref{eq:areaformula} again gives
    \[
        \int_{\Gamma_\rho} \lambda(x) \d \H^{m-1}(x)
        =\int_{V_\Gamma}
            \lambda(g_\Gamma(v))
            \jacobianf{m-1}{q_\rho}{v} \d v,
    \]
    where
    \[
        \jacobianf{m-1}{q_\rho}{v}=\sqrt{1+\norm{\DIFFSS f_\Gamma(v)+\rho \DIFFSS h_\Gamma(v)}^2}.
        \qedhere
    \]
\end{proof}

\section{Proofs for $p=1$ (§\ref{sec:main-l1})}
\label{appendix:l1}

In this appendix, we provide the proofs for some of the lemmas required for the
$p=1$ case in §\ref{sec:main-l1}. We begin with a modified version of the 
idelity estimate Lemma \ref{lemma:phi-approx} in Appendix \ref{app:l1fid}.
Observe that this version does not require the $L^\infty_\loc$ bounds.
Employing this result, we then prove Lemma \ref{lemma:r-curvature-sol} and
Lemma \ref{lemma:div-curv}, mainly concerning the regulariser, in Appendix
\ref{appendix:r-curvature-sol} and Appendix \ref{appendix:div-curv} respectively.

\subsection{Estimate of the fidelity}
\label{app:l1fid}

\begin{lemma}
    \label{lemma:phi-approx-l1}
    Suppose $\phi(t)$ is $1$-increasing,
    and that $u, f \in \BVspace(\Omega)$.
    Let $x_0 \in J_u \setminus (S_f \union Z_u)$.
    Given $\epsilon>0$, there exist $\sigma \in\{-1,+1\}$,
    $r_0>0$ and $\theta \in [0, 1]$,
    such that for any $r \in (0, r_0)$, $-1 < \rho < 1$, and
    $\baseh \in \basehspace$ with $-1 \le \baseh \le 1$, we have
    \begin{equation}
        \label{eq:phi-approx-l1}
        \begin{split}
        \int_{\Omega} \phi(\newu[\rho](x) & -f(x)) \d x 
        - \int_{\Omega} \phi(u(x) -f(x)) \d x
        \\
        &
        \le \epsilon\rho r^m
            - \sigma (1-\theta) \rho C_\phi \lambda_u(x_0) I_r,
        \quad
        (0 < r < r_0, -1 < \rho < 1).
        \end{split}
    \end{equation}
\end{lemma}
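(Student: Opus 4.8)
──

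The plan is to mirror the structure of the proof of Lemma~\ref{lemma:phi-approx-1} (and its strict-convexity successor Lemma~\ref{lemma:phi-const-approx}), but now tracking the first-order term in $\rho$ explicitly, since for $p=1$ the quantity $K^\pm$ is exactly linear in the shift and no genuine decrease beyond $O(\rho)$ is available. First I would invoke Lemma~\ref{lemma:phi-approx-1} with $p=1$ (which does \emph{not} require $L^\infty_\loc$ bounds in that case) to reduce the estimate of $\int_\Omega \phi(\newu[\rho]-f)\,dx - \int_\Omega \phi(u-f)\,dx$ to the piecewise-constant version on $U_r$, up to an error $\epsilon\abs{\rho}r^m$ (after halving $\epsilon$). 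So it suffices to bound
\[
    \int_{U_r} \phi(\newuz[\rho](x)-f_0(x))\,dx - \int_{U_r}\phi(u_0(x)-f_0(x))\,dx.
\]

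Next, exactly as in Lemma~\ref{lemma:phi-const-approx}, outside $\hat W_r^\pm := U_r^\pm \isect \gammarhoh[\pm\rho](U_r^\mp)$ the transformed piecewise-constant function agrees with $u_0$, so only the integral over $\hat W_r^\pm$ contributes. Writing $b^\pm := u^\pm(x_0)$, $a := \tilde f(x_0)$, and using Definition~\ref{def:phi-increase} with $p=1$ (so the $\abs{\cdot}^{p-1}$ factor is $1$),
\[
    \phi(\newuz[\rho](x)-f_0(x)) - \phi(u_0(x)-f_0(x)) \le C_\phi\bigl(\abs{\newuz[\rho](x)-f_0(x)} - \abs{u_0(x)-f_0(x)}\bigr) = C_\phi K^+
\]
on $\hat W_r^+$, with $K^+ = \abs{(1-\theta)(b^- - a)+\theta(b^+ - a)} - \abs{b^+ - a}$, and an analogous $K^-$ on $\hat W_r^-$. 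By Lemma~\ref{lemma:triangle-mod}, $\L^m(\hat W_r^+) = \L^m(\hat W_r^-) = \rho I_r$ (for $\rho>0$; for $\rho<0$ negate $\baseh$), so the piecewise-constant contribution is bounded by $C_\phi \rho I_r (K^+ + K^-)$. Now the key elementary observation: by the triangle inequality $K^+ + K^- \le 0$ always, and more precisely one of the two one-sided shifts makes genuine progress at rate $C_\phi \lambda_u(x_0)$. Concretely, if $a$ lies between $b^+$ and $b^-$ one gets $K^\sigma \le -(1-\theta)\abs{b^+-b^-}$ for the appropriate sign $\sigma$ (shifting the jump so the ``wrong side'' shrinks toward $a$); if $a$ lies outside the interval $[b^-,b^+]$ one still gets a favourable sign for one choice of $\sigma$. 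Picking $\theta\in[0,1]$ (in fact $\theta=0$ works here, unlike the $p>1$ case) and the sign $\sigma$ accordingly yields $C_\phi(K^+ + K^-) \le -\sigma(1-\theta)C_\phi\lambda_u(x_0)$, whence the piecewise-constant term is $\le -\sigma(1-\theta)\rho C_\phi\lambda_u(x_0) I_r$. Combining with the $\epsilon\rho r^m$ error from the first step gives \eqref{eq:phi-approx-l1}.

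The main obstacle — really a bookkeeping subtlety rather than a deep one — is pinning down the correct case analysis for the sign $\sigma$ and making sure the inequality for $K^+ + K^-$ is sharp enough to retain the $\lambda_u(x_0)$ factor rather than just giving $\le 0$; this is the $p=1$ shadow of the step in Lemma~\ref{lemma:phi-const-approx} that genuinely needed $p>1$. One has to handle the degenerate sub-case $a = (b^++b^-)/2$ with care: there $\zeta^\pm$ considerations are irrelevant (no $\abs{\cdot}^{p-1}$ weight), and one checks directly that $K^+ + K^- = -(1-\theta)\abs{b^+-b^-}$ still, so \emph{either} sign of shift works and we may simply fix, say, $\sigma = +1$. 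A secondary point is to confirm that Lemma~\ref{lemma:phi-approx-1}'s hypotheses are met with only $u,f\in\BVspace(\Omega)$ (no local boundedness) when $p=1$ — which is exactly the alternative hypothesis stated there — and that $x_0\in J_u\setminus(S_f\union Z_u)$ suffices, noting $\tilde J_f\subset S_f$ up to $\H^{m-1}$-null sets is not needed since $S_f$ already absorbs the relevant exceptional set. The uniformity of $r_0$ in $\rho$ and in the choice of $\baseh$ with $-1\le\baseh\le 1$ follows as in Lemma~\ref{lemma:phi-approx-1}, since all the smallness requirements on $r$ there are stated in that uniform form.
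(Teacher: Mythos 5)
Your overall strategy is the right one and matches the paper's: reduce to the piecewise-constant triple $(\newuz[\rho],u_0,f_0)$ via Lemma~\ref{lemma:phi-approx-1} (correctly noting that the $p=1$ branch needs no $L^\infty_\loc$ bound), then run the elementary $K^\pm$ analysis on the in-between sets. However, the core estimate as you state it has a genuine gap. First, the measure claim $\L^m(\hat W_r^+)=\L^m(\hat W_r^-)=\rho I_r$ is false here: unlike Lemma~\ref{lemma:phi-const-approx}, which uses the \emph{two} transformations $\gammax{\pm\rho}{h}{r}$, this lemma uses a \emph{single} transformation with a possibly sign-changing $\baseh$ (the hypothesis is $-1\le\baseh\le 1$). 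Writing $h_r=h_r^{(+)}-h_r^{(-)}$, the set where the graph moves up has measure $\rho\int h_r^{(+)}$ and the set where it moves down has measure $\rho\int h_r^{(-)}$; these are not equal, and $I_r$ is their difference (and need not even be positive). Second, the inequality $K^++K^-\le-\sigma(1-\theta)\abs{b^+-b^-}$ that your argument rests on is false: when $a=\tilde f(x_0)$ lies outside the interval between $b^+$ and $b^-$ (say $a\ge b^->b^+$), one has $K^+=-(1-\theta)\abs{b^+-b^-}$ but $K^-=+(1-\theta)\abs{b^+-b^-}$ exactly, so $K^++K^-=0$ and no decrease survives from the symmetric grouping. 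The paper's mechanism is precisely the asymmetric pairing you discarded: $K^+$ is weighted by $\rho\int h_r^{(+)}$ and $K^-$ by $\rho\int h_r^{(-)}$, and the two one-sided bounds $K^+\le-(1-\theta)\lambda_u(x_0)$ and $K^-\le+(1-\theta)\lambda_u(x_0)$ combine to give $-C_\phi(1-\theta)\rho\,\lambda_u(x_0)\bigl(\int h_r^{(+)}-\int h_r^{(-)}\bigr)=-C_\phi(1-\theta)\rho\,\lambda_u(x_0)I_r$.

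Your parenthetical that $\theta=0$ works is also incorrect, and for the same reason the $\zeta^\pm$ bookkeeping cannot be waved away. With $\theta=0$ and $a$ strictly between the midpoint and $b^-$ (e.g.\ $b^+=0$, $b^-=1$, $a=0.6$) one gets $K^+=\abs{b^--a}-\abs{b^+-a}=-0.2$, far short of the required $-(1-\theta)\abs{b^+-b^-}=-1$. The full-strength bound $K^+\le-(1-\theta)\abs{b^+-b^-}$ holds only once $\theta$ is chosen close enough to $1$ that the interpolated value $(1-\theta)b^-+\theta b^+$ lies on the same side of $a$ as $b^+$; this is exactly the paper's choice, and it is needed because the constant $(1-\theta)C_\phi\lambda_u(x_0)$ must later match the regulariser's curvature term in Lemma~\ref{lemma:r-curvature-sol}. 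The remaining points you raise (the $\abs{\cdot}^{0}$ factor, the degenerate case $a=(b^++b^-)/2$, the role of $S_f$ versus $\tilde J_f$, uniformity of $r_0$ in $\rho$ and $\baseh$) are handled correctly or are harmlessly recoverable.
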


\begin{proof}
    We assume that $\rho \ge 0$; the case $\rho<0$ can be handled by
    negating $\baseh$. Using Lemma \ref{lemma:phi-approx-1} to pass
    from the functions $(\newu[\rho], u, f)$ to $(\newuz[\rho], u_0, f_0)$,
    it suffices to prove the estimate corresponding to \eqref{eq:phi-approx-l1} 
    for the latter piecewise constant functions.

    Referring to Definition \ref{def:phi-increase} we deduce
    \[
        \begin{split}
        \phi(\newuz[\rho](x) & -f_0(x)) - \phi(u_0(x) -f_0(x))
        \\ &
        \le
        C_\phi(\abs{\newuz[\rho](x) -f_0(x)} - \abs{u_0(x) -f_0(x)}) \abs{\newuz[\rho](x) -f_0(x)}^0.
        \end{split}
    \]
    This is non-zero only in $W_r^+ \defeq U_r^+ \isect \gammax{\rho}{h}{r}(U_r^-)$,
    and in $W_r^- \defeq U_r^- \isect \gammax{\rho}{h}{r}(U_r^+)$.
    For $x \in W_r^+$, as in the proof of Lemma \ref{lemma:phi-const-approx},
    \[
        \begin{split}
        K^+ & \defeq \abs{(1-\theta)(b^- -a) + \theta (b^+  - a)} - \abs{b^+  - a}
        \\
        & =
        \abs{\newuz[\rho](x) -f_0(x)} - \abs{u_0(x) -f_0(x)}.
        \end{split}
    \]
    Here we again denote $b^\pm \defeq u^\pm(x_0)$ and $a \defeq \tilde f(x_0)$.
    Also
    \[
        \zeta^+ \defeq \abs{(1-\theta)(b^- -a) + \theta (b^+  - a)}^{0}
        =\abs{\newuz[\rho](x) -f_0(x)}^0.
    \]
    For $x \in W_r^-$, we have
    \[
        \begin{split}
        K^- & \defeq \abs{(1-\theta)(b^+ -a) + \theta (b^-  - a)} - \abs{b^-  - a}
        \\
        & =
        \abs{\newuz[\rho](x) -f_0(x)} - \abs{u_0(x) -f_0(x)}.
        \end{split}
    \]
    Moreover,
    \[
        \zeta^- \defeq \abs{(1-\theta)(b^+ -a) + \theta (b^-  - a)}^{0}
        = \abs{\newuz[\rho](x) -f_0(x)}^0.
    \]
    Writing $h_r=h_r^{(+)}-h_r^{(-)}$ for $h_r^{(+)}, h_r^{(-)}\ge 0$, thus
    \[
        \begin{split}
        L & \defeq
        \int_{U_r} \phi(\newuz[\rho](x) -f_0(x)) - \phi(u_0(x) -f_0(x)) \d x
        \\
        &
        \le
        C_\phi K^+ \zeta^+ \int_{V_\Gamma} \rho h_r^{(+)}(v) \d v
        +
        C_\phi K^- \zeta^- \int_{V_\Gamma} \rho h_r^{(-)}(v) \d v.
        \end{split}
    \]
    As in the proof of Lemma \ref{lemma:phi-const-approx}, we
    concentrate on the case $b^+  <  b^-$ with $a \ge (b^+ + b^-)/2$,
    setting $\sigma=1$.
    Other cases are handled analogously with $\sigma=-1$.
    Since $b^+ \ne b^-$, there are at most two choices of $\theta$ 
    for which $\zeta^+ = 0 $ or $\zeta^-=0$. 
    Simply by the triangle inequality
    \[
        K^- \le (1-\theta)\abs{b^+ - b^-}.
    \]
    Further, choosing $\theta < 1$ large enough while maintaining
    $\zeta^\pm = 1$, we can ascertain
    \[
        K^+ \le (1-\theta)(b^+ - b^-) = -(1-\theta)\abs{b^+ - b^-}.
    \]
    Thus
    \[
        L
        \le C_\phi  \abs{b^+-b^-} \left(\int_{V_\Gamma} \rho h_r^{(-)}(v) \d v
                            - \int_{V_\Gamma} \rho h_r^{(+)}(v) \d v \right)
        = -C_\phi \rho \lambda_u(x_0) I_r.
        \qedhere
    \]
\end{proof}

\begin{remark}
    Looking at the proof of Lemma \ref{lemma:phi-const-approx} with 
    a little bit more care in the approximation of $K^-$ above,
    we could in fact get the stronger double-sided estimate
    \eqref{eq:phi-approx} if $u$ ``jumps through $f$'',
    in other words, if
    \[
        \min\{u^+(x), u^-(x)\} < \tilde f(x_0) < \max\{u^+(x), u^-(x)\}.
    \]
\end{remark}

\subsection{Proof of Lemma \ref{lemma:r-curvature-sol}}
\label{appendix:r-curvature-sol}


The proof, as well as the proof of Lemma \ref{lemma:div-curv},
depends on the following technical result on the pointwise
$R$-curvature.

\begin{lemma}
    \label{lemma:r-curvature}
    Let $\alpha > 0$ and $\Gamma \subset \Omega$ be 
    a Lipschitz $(m-1)$-graph. Then at 
    any $x_0 \in \Gamma$, either of the following holds.
    \begin{enumroman}
        \item
        \label{item:r-curvature-first}
        $\CURVP_u^{R,\Gamma,\baseh}(x_0)=\inv\alpha$. 
        \item
        \label{item:r-curvature-second}
        For some $\kappa \in (0, 1)$ there exist $r_j \downto 0$
        and for each $j$, some $\rho_{j, i} \downto 0$ such that
        for every $i, j \in \Z^+$ either
        \begin{equation}
            \label{eq:r-curvature-second1}
            R(\pf{\gammax{\rho_{j,i}}{h}{r_j}} u)  - R(u)
            - \kappa \inv\alpha \rho_{j,i} 
            I_r
            \le 0,
        \end{equation}
        or
        \begin{equation}
            \label{eq:r-curvature-second2}
            R(\pf{\gammax{\rho_{j,i}}{h}{r_j}} u)  - R(u)
            - \inv \kappa \inv\alpha \rho_{j,i} 
            I_r
            \ge 0.
        \end{equation}
    \end{enumroman}
\end{lemma}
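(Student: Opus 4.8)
The plan is to rewrite the pointwise $R$-curvature as an iterated limit of difference quotients and then run a dichotomy on that iterated limit. For $r>0$ small enough that $U_r\subset\Omega$ and for $-1<\rho<1$, set $F_r(\rho)\defeq R(\pf{\gammax{\rho}{h}{r}} u)$ and, for $\rho\neq 0$,
\[
    Q(r,\rho)\defeq\frac{F_r(\rho)-F_r(0)}{\rho\,I_r}.
\]
Since $c\mapsto\gamma_{ch_r}$ is affine with $\gammax{1}{h}{r}=\gamma_{h_r}$, we have $\rho\,\gamma_{h_r}+(1-\rho)\iota=\gammax{\rho}{h}{r}$, whence $\TDIFF_u^R(\gamma_{h_r})=I_r\lim_{\rho\downto 0}Q(r,\rho)$ whenever the latter limit exists, and then $\CURVP_u^{R,\Gamma,\baseh}(x_0)=\lim_{r\downto 0}\lim_{\rho\downto 0}Q(r,\rho)$, again when these limits exist. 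Thus everything reduces to (a) knowing the inner limit exists for all small $r$ — which is \emph{not} granted by Definition~\ref{def:r-curvature} — and (b) a routine analysis of a double limit.

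For step (a) the plan is to show $F_r$ is semiconcave near $0$. Fix $\rho$ near $0$ and $\epsilon>0$ small, put $v\defeq\pf{\gammax{\rho}{h}{r}} u\in\BVspace(\Omega)$, and write
\[
    \pf{\gammax{\rho\pm\epsilon}{h}{r}} u=\pf{\beta_\pm} v,\qquad
    \beta_\pm\defeq\gammax{\rho\pm\epsilon}{h}{r}\circ\bigl(\gammax{\rho}{h}{r}\bigr)^{-1}\in\LipClass(\Omega,U_r).
\]
A direct computation, using that $c\mapsto\gamma_{ch_r}$ is affine and $c\mapsto\jacobianf{m}{\gamma_{ch_r}}{x}$ is affine at each $x$, shows that $\grad\beta_\pm$ has everywhere exactly the block form $\bigl(\begin{smallmatrix}I&0\\ \pm\epsilon' b^T&1\pm\epsilon'\mu\end{smallmatrix}\bigr)$ that appears in the proof of Lemma~\ref{lemma:h-shift-gamma}, with $\epsilon'=\epsilon/(1+\rho\mu)$; repeating that proof verbatim — Lemma~\ref{lemma:biestim} for $\bitrans{\beta_+}{\beta_-}$, affineness of the Jacobian determinant for $\bitransjac{\beta_+}{\beta_-}=0$, and the bound $\didtrans{\beta_\pm}=O(\epsilon)$ for the $\didtrans{\beta_\pm}^2$ terms — yields a constant $C=C(r)$ with $\bitransfull{\beta_+}{\beta_-}\le C\epsilon^2$ for all small $\rho,\epsilon$. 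Feeding this into double-Lipschitz comparability (Definition~\ref{def:lipschitz-trans}) applied to $v$ on $U_r$ gives
\[
    F_r(\rho+\epsilon)+F_r(\rho-\epsilon)-2F_r(\rho)
    =R(\pf{\beta_+} v)+R(\pf{\beta_-} v)-2R(v)
    \le \RCa\,C(r)\,\epsilon^2\,\abs{Dv}(\closure U_r)\le C'(r)\,\epsilon^2,
\]
the last bound being uniform for $\rho$ in a fixed neighbourhood of $0$ because $\abs{Dv}(\closure U_r)\le(1+\bitrans{\gammax{\rho}{h}{r}}{\iota})\abs{Du}(\closure U_r)$ is bounded there. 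Hence $\rho\mapsto F_r(\rho)-\tfrac{C'(r)}{2}\rho^2$ is midpoint concave near $0$; being also finite-valued ($F_r(0)=R(u)<\infty$, and $F_r(\rho)\le R(u)+\RCa\,\bitransfull{\gammax{\rho}{h}{r}}{\iota}\abs{Du}(\closure U_r)$ by comparability with $\gammatwo=\iota$) and measurable, it is concave there, so $F_r$ has a right derivative at $0$. Therefore $g(r)\defeq\TDIFF_u^R(\gamma_{h_r})/I_r=\lim_{\rho\downto 0}Q(r,\rho)$ exists for every small $r>0$.

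For step (b): if $\lim_{r\downto 0}g(r)$ exists and equals $\inv\alpha$, alternative \ref{item:r-curvature-first} holds. Otherwise I would produce $\delta>0$ and $r_j\downto 0$ with, for all $j$, \emph{either} $g(r_j)\ge\inv\alpha+\delta$ \emph{or} $g(r_j)\le\inv\alpha-\delta$: if $\lim_{r\downto 0}g(r)=\beta\in[-\infty,\infty]\setminus\{\inv\alpha\}$ this is immediate (take $\delta=\tfrac12\abs{\beta-\inv\alpha}$, or any $\delta$ when $\beta=\pm\infty$); if $\lim_{r\downto 0}g(r)$ fails to exist, then $\liminf_{r\downto 0}g(r)<\limsup_{r\downto 0}g(r)$ and at least one of these is $\ne\inv\alpha$, so taking $r_j\downto 0$ along which $g$ converges to that extreme value and discarding finitely many $j$ gives the claim. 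In the ``$\ge$'' case fix $\kappa\in(0,1)$ with $1/(\kappa\alpha)<\inv\alpha+\delta$; since $g(r_j)=\lim_{\rho\downto 0}Q(r_j,\rho)>1/(\kappa\alpha)$, for each $j$ we may pick $\rho_{j,i}\downto 0$ with $Q(r_j,\rho_{j,i})>1/(\kappa\alpha)$ for all $i$, which multiplied by $\rho_{j,i}I_{r_j}>0$ is exactly \eqref{eq:r-curvature-second2}. In the ``$\le$'' case, shrinking $\delta$ so that $\inv\alpha-\delta>0$, fix $\kappa\in(0,1)$ with $\kappa/\alpha>\inv\alpha-\delta$ and pick $\rho_{j,i}\downto 0$ with $Q(r_j,\rho_{j,i})<\kappa/\alpha$, giving \eqref{eq:r-curvature-second1}. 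Either way alternative \ref{item:r-curvature-second} holds.

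The main obstacle is step (a): Definition~\ref{def:r-curvature} only defines $\TDIFF_u^R(\gamma_{h_r})$ ``when the limit exists'', so the real content of the lemma is the semiconcavity estimate, which forces that limit into existence via double-Lipschitz comparability; this requires a recomputation of the Jacobian and norm estimates of Lemma~\ref{lemma:h-shift-gamma} for the recentred transformations $\beta_\pm$. Everything after that — the iterated-limit dichotomy and the choice of $\kappa$ — is elementary.
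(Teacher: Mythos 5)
Your proof is correct, but it takes a genuinely different route from the paper's. The paper argues by contraposition: negating alternative \ref{item:r-curvature-second} for every $\kappa\in(0,1)$ yields, for all sufficiently small $r$ and then all sufficiently small $\rho$, the two-sided squeeze $\kappa\inv\alpha\rho I_r\le R(\pf{\gammax{\rho}{h}{r}}u)-R(u)\le\inv\kappa\inv\alpha\rho I_r$; dividing by $\rho$ and passing to the \emph{upper and lower} transformation differentials $\TDIFF_u^{R,*}$ and $\TDIFF_{u,*}^{R}$, then dividing by $I_r$, letting $r\downto 0$ and finally $\kappa\upto 1$, forces all four iterated limits to coincide with $\inv\alpha$. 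Crucially, the paper never establishes that the inner $\rho$-limit exists for a fixed $r$ --- it works with $\limsup$/$\liminf$ throughout and the squeeze makes the distinction immaterial. Your step (a) is therefore genuinely new content: the recentring $\beta_\pm=\gammax{\rho\pm\epsilon}{h}{r}\circ(\gammax{\rho}{h}{r})^{-1}$ does reproduce the block structure of Lemma \ref{lemma:h-shift-gamma} with a pointwise scale $\epsilon'=\epsilon/(1+\rho\mu)$ that is the \emph{same} for $\beta_+$ and $\beta_-$, so the cancellations giving $\bitransjac{\beta_+}{\beta_-}=0$ and $\bitrans{\beta_+}{\beta_-}=O(\epsilon^2)$ survive, and the resulting semiconcavity of $\rho\mapsto R(\pf{\gammax{\rho}{h}{r}}u)$ (finite and bounded above near $0$ by comparability against $\iota$, hence genuinely concave after subtracting a quadratic) yields existence of the right derivative at $0$. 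This actually repairs a point the paper leaves implicit, since Definition \ref{def:r-curvature} formally requires $\TDIFF_u^R(\gamma_{h_r})$ to exist as a limit for case \ref{item:r-curvature-first} to be meaningful. Your step (b) dichotomy is then elementary and matches the content of the paper's squeeze. The trade-off: the paper's argument is much shorter and avoids the recentred Jacobian computation; yours is longer but delivers a strictly stronger conclusion (existence of the transformation differential for every small $r$, regardless of which alternative holds). One cosmetic remark: the paper's displayed bound \eqref{eq:r-curvature-bound-1} has $\kappa$ and $\inv\kappa$ transposed, which your version implicitly corrects.
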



\begin{proof}
    We observe that $\gammax{\rho}{h}{r}=\rho \gammax{1}{h}{r}+ (1-\rho)\iota$,
    so that
    \[
        \TDIFF_u^R(\gammax{1}{h}{r})
        = \limsup_{\rho \downto 0} \frac{R(\pf{\gammax{\rho}{h}{r}} u)-R(u)}{\rho}.
    \]
    Suppose case \ref{item:r-curvature-second} does not hold at $x_0 \in \Gamma$.
    Fix $\kappa \in (0, 1)$. Then there exists $r_0>0$ such
    that for every $0 < r < r_0$, there exists $\rho^r > 0$ such that
    for every $0 < \rho < \rho^r$ we have bounds
    \begin{equation}
        \label{eq:r-curvature-bound-1}
        \inv\kappa \inv\alpha \rho 
        I_r
        \le
        R(\pf{\gammax{\rho}{h}{r}} u)  - R(u)
        \le
        \kappa \inv\alpha \rho 
        I_r.
    \end{equation}
    If we define the upper and lower transformation differentials
    \[
        \TDIFF_{u}^{R,*}(\gamma) \defeq \limsup_{\rho \downto 0}
            \frac{R(\pf\gamma u)  - R(u)}{\rho}
        \quad\text{and}\quad
        \TDIFF_{u,*}^{R}(\gamma) \defeq \liminf_{\rho \downto 0}
            \frac{R(\pf\gamma u)  - R(u)}{\rho},
    \]
    then dividing \eqref{eq:r-curvature-bound-1} by $\rho$ and 
    letting $\rho \downto 0$, it follows  that
    \[
        \begin{split}
            \inv\kappa \inv\alpha 
            I_r
            &
            \le
            \TDIFF_{u,*}^{R}(\gammax{\rho}{h}{r})
            \le
            \TDIFF_{u}^{R,*}(\gammax{\rho}{h}{r})
            \le
            \kappa \inv\alpha 
            I_r.
        \end{split}
    \]
    Dividing by $I_{r}$ and letting $r \downto 0$, we find
    \[
            \inv\kappa \inv\alpha 
            \le
            \liminf_{r \downto 0} \inv I_r
            \TDIFF_{u,*}^{R}(\gammax{\rho}{h}{r})
            \le
            \limsup_{r \downto 0} \inv I_r
            \TDIFF_{u}^{R,*}(\gammax{\rho}{h}{r})
            \le
            \kappa \inv\alpha 
    \]
    Since $\kappa \in (0, 1)$ was arbitrary, 
    we deduce
    \[
            \inv\alpha 
            = \CURVP_u^{R,\Gamma,\baseh}(x_0).
    \]
    In particular, $\CURVP_u^{R,\Gamma,\baseh}(x_0)$ exists
    and case \ref{item:r-curvature-first} holds.
\end{proof}

With this, we are ready to prove the curvature expression claimed in Lemma \ref{lemma:r-curvature-sol}.

\begin{proof}[Proof of Lemma \ref{lemma:r-curvature-sol}]
    By Lemma \ref{lemma:phi-approx-l1}, given $\epsilon>0$, 
    we have the existence of $r_0>0$ and $\sigma \in \{-1, +1\}$ such that
    \begin{equation}
        \label{eq:r-curvature-second1-use1}
        \int_{\Omega} \phi(\newu[\rho](x)  -f(x)) \d x 
        - \int_{\Omega} \phi(u(x) -f(x)) \d x
        \le \epsilon\rho r^m
            - \sigma (1-\theta) \rho C_\phi 
            \lambda_u(x_0) I_r,
    \end{equation}
    whenever $0 < r < r_0$ and $-1 < \rho < 1$.
    We concentrate on the $\sigma=+1$; the case $\sigma=-1$ is 
    handled analogously (or simply by exchanging sides $\Gamma^+$ and $\Gamma^-$).
    We suppose first that Lemma \ref{lemma:r-curvature}\ref{item:r-curvature-second}
    holds for some $x_0 \in (\Gamma \isect J_u) \setminus (J_f \union Z_u)$
    with $\alpha=1/(\lambda_u(x_0)C_\phi)$. We further concentrate first
    on the case that \eqref{eq:r-curvature-second1} holds.
    Then for some $\kappa \in (0, 1)$, a sequence
    $r_j \downto 0$, and for each fixed $j$, a sequence
    $\rho_{j,i} \downto 0$, we have
    \begin{equation}
        \notag
        R(\newur[\rho_{j,i},r_j])  - R(u)
        \le
        (1-\theta)\left(
            R(\pf{\gammax{\rho_{j,i}}{h}{r_j}}U)  - R(u)
        \right)
        \le
        (1-\theta) \kappa C_\phi \rho_{j,i} 
        \lambda_u(x_0) I_{r_j}.
    \end{equation}
    Taking $r=r_j$ and $\rho=\rho_{j,i}$ and summing with
    \eqref{eq:r-curvature-second1-use1}, 
    we find that
    \[
        \begin{split}
        \biggl(
            \int_{\Omega} \phi(\newur[\rho_{j,i},r_j](x)  -f(x)) \d x 
            &
            + R(\newur[\rho_{j,i},r_j])
        \biggr) - \biggl(
            \int_{\Omega} \phi(u(x) -f(x)) \d x 
            + R(u)
        \biggr)
        \\
        &
        \le \epsilon\rho_{j,i} r_j^m
            - (1-\kappa)(1-\theta) \rho_{j,i} C_\phi 
            \lambda_u(x_0) I_{r_j},
        \\
        &
        \le \epsilon \rho_{j,i} r_j^m - C'' \rho_{j,i} r_j^{m}.
        \end{split}
    \]
    Here the constant $C''>0$. Choosing $j$ large enough that
    we can take $\epsilon=C''/2$, we can make this negative.
    This yields a contradiction to $u$ solving \eqref{eq:prob}.
    
    Suppose then that \eqref{eq:r-curvature-second2} holds.
    Then for some $\kappa > 0$ we have
    \begin{equation}
        \label{eq:r-curvature-second2-use1}
        R(\newur[\rho_{j,i},r_j])  - R(u)
        \ge
        (1-\theta) \inv\kappa C_\phi \rho_{j,i} 
        \lambda_u(x_0) I_{r_j},
    \end{equation}
    We now use the double-Lipschitz comparability we already used in
    the proof of Lemma \ref{lemma:r-approx}. Namely,
    choosing $r>0$ small enough, we have for some constant $C'>0$ that
    \[
        R(\newur[\rho,r])  + R(\newur[-\rho,r])  - R(u)
        \le C' \rho^2 r^{m-1}.
    \]
    We may w.log assume that all $\{r_i\}_{i \in \Z^+}$ are small enough
    for this. Using \eqref{eq:r-curvature-second2-use1} it follows
    \begin{equation}
        \label{eq:r-curvature-second2-transfer}
        R(\newur[-\rho_{j,i},r_j])  - R(u)
        \le
        C' \rho_{j,i}^2 r_j^{m-1}
        -
        (1-\theta) \inv\kappa C_\phi \rho_{j,i} 
        \lambda_u(x_0) I_{r_j}.
    \end{equation}
    Summing \eqref{eq:r-curvature-second1-use1} 
    with \eqref{eq:r-curvature-second2-transfer} we find that
    \[
        \begin{split}
        \biggl(
            \int_{\Omega} \phi(\newur[-\rho_{j,i},r_j](x) & -f(x)) \d x 
            + R(\newur[-\rho_{j,i},r_j])
        \biggr) - \biggl(
            \int_{\Omega} \phi(u(x) -f(x)) \d x
            + R(u)
        \biggr)
        \\
        &
        \le C' \rho_{j,i}^2 r_j^{m-1}
            + \epsilon\rho_{j,i} r_j^m
            + (1-\inv\kappa)(1-\theta) \rho_{j,i} C_\phi 
            \lambda_u(x_0) I_{r_j}
        \\
        &
        \le C' \rho_{j,i}^2 r_j^{m-1}
            + \epsilon \rho_{j,i} r_j^m
             - C'' \rho_{j,i} r_j^{m}.
        \end{split}
    \]
    Again the constant $C''>0$.
    Choosing $j \in \Z^+$ large enough that we can take $0 < \epsilon < C''/2$,
    and then choosing $i$ large enough that $\rho_{j,i}^2$ becomes very small,
    we can make the right hand side negative. Thus we again have a contradiction
    to $u$ solving \eqref{eq:prob}.
    It follows that Lemma \ref{lemma:r-curvature}\ref{item:r-curvature-first}
    has to hold for all $x_0 \in (\Gamma \isect J_u) \setminus (J_f \union Z_u)$.
    With our choice $\alpha=1/(\lambda_u(x_0)C_\phi)$,
    this says exactly that
    $\CURVP_u^{R,\Gamma,\baseh}(x_0)=\lambda_u(x_0)C_\phi$
    for $\H^{m-1}$-\ae $x_0 \in (J_u \setminus J_f) \isect \Gamma$.
\end{proof}

\subsection{Proof of Lemma \ref{lemma:div-curv}}
\label{appendix:div-curv}

We use the next lemma to translate the pointwise $R$-curvature for general $R$ into pointwise $\TV_\Gamma$-curvature for $\TV_\Gamma(u) \defeq \abs{Du}(\Gamma)$ under the approximate piecewise constancy assumption.

\begin{lemma}
    \label{lemma:curvp-d-pf}
    Let $\Gamma \subset \Omega$ be a Lipschitz $(m-1)$-graph
    and $x_0 \in \Gamma$.
    Suppose $R$ is separably double-Lipschitz comparable, and
    $\Theta_m(\abs{Du} \restrict \Omega \setminus \Gamma; x_0)=0$.
    Then
    \begin{equation}
        \label{eq:curvp-d-pf}
        \CURVP_u^{R,\Gamma,\baseh}(x_0) = \lim_{r \downto 0} \inv I_r \tilde \TDIFF_u^R(\gammax{1}{h}{r}),
    \end{equation}
    where
    \begin{equation}
        \notag
        \tilde \TDIFF_u^R(\gammax{1}{h}{r})
        \defeq \lim_{\rho \downto 0} 
            \RCs
            \frac{
            \abs{D \pf{\gammax{\rho}{h}{r}} u}(\gammax{\rho}{h}{r}(\Gamma)) - \abs{D u}(\Gamma)
            }{\rho}.
    \end{equation}
\end{lemma}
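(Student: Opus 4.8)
The plan is to trap the transformation differential $\TDIFF_u^R(\gammax{1}{h}{r})$ between two quantities that each differ from $\tilde\TDIFF_u^R(\gammax{1}{h}{r})$ only by an error of size $\abs{Du}(\closure U_r\setminus\Gamma)$, and then to annihilate this error by dividing by $I_r$ and letting $r\downto 0$, using the approximate piecewise constancy hypothesis. Throughout I write $U_r$ for the neighbourhood of $x_0$ from Lemma \ref{lemma:gamma-rho-h-r}, and, recalling from the proof of Lemma \ref{lemma:r-curvature} that $\gammax{\rho}{h}{r}=\rho\gammax{1}{h}{r}+(1-\rho)\iota$, I note $\TDIFF_u^R(\gammax{1}{h}{r})=\lim_{\rho\downto 0}\rho^{-1}\bigl(R(\pf{\gammax{\rho}{h}{r}}u)-R(u)\bigr)$. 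I also record that $\Gamma_\rho\defeq\gammax{\rho}{h}{r}(\Gamma)$ is, for $\abs\rho$ small, a Lipschitz $(m-1)$-graph over $V_\Gamma$ with defining function $f_\Gamma+\rho h_r$, that $U_r\subset\B(x_0,Cr)$ and $I_r=r^m I_1$ with $I_1=\int_{V_\Gamma}\baseh>0$, and hence that the hypothesis $\Theta_m(\abs{Du}\restrict\Omega\setminus\Gamma;x_0)=0$ gives $\abs{Du}(\closure U_r\setminus\Gamma)=o(r^m)$ as $r\downto 0$.

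For the upper bound I would apply separable double-Lipschitz comparability (Definition \ref{def:lipschitz-trans}) to $u$ with the pair $(\gammaone,\gammatwo)=(\gammax{\rho}{h}{r},\iota)$, open set $U_r$, and graph $\Gamma$; since $\pf\iota u=u$ and $\iota(\Gamma)=\Gamma$ this reads $R(\pf{\gammax{\rho}{h}{r}}u)-R(u)\le\RCa\bitransfull{\gammax{\rho}{h}{r}}{\iota}\abs{Du}(\closure U_r\setminus\Gamma)+\RCs\bigl(\abs{D\pf{\gammax{\rho}{h}{r}}u}(\Gamma_\rho)-\abs{Du}(\Gamma)\bigr)$. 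Invoking $\bitransfull{\gammax{\rho}{h}{r}}{\iota}\le C\abs\rho$ from Lemma \ref{lemma:gamma-rho-h-r}, dividing by $\rho$ and letting $\rho\downto 0$ yields $\TDIFF_u^R(\gammax{1}{h}{r})\le\RCa C\abs{Du}(\closure U_r\setminus\Gamma)+\tilde\TDIFF_u^R(\gammax{1}{h}{r})$.

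The one genuinely non-obvious point is the lower bound: the comparison inequality only controls a symmetric combination, so I cannot lower-bound $R(\pf{\gammax{\rho}{h}{r}}u)-R(u)$ directly. Instead I would apply separable double-Lipschitz comparability to the already-transformed function $v\defeq\pf{\gammax{\rho}{h}{r}}u$ with the pair $(\inv{\gammax{\rho}{h}{r}},\iota)$, open set $U_r$ (note $\inv{\gammax{\rho}{h}{r}}$ is likewise the identity off $U_r$), and graph $\Gamma_\rho$. Because $\pf{\inv{\gammax{\rho}{h}{r}}}v=u$ and $\inv{\gammax{\rho}{h}{r}}(\Gamma_\rho)=\Gamma$, rearranging gives $R(\pf{\gammax{\rho}{h}{r}}u)-R(u)\ge\RCs\bigl(\abs{D\pf{\gammax{\rho}{h}{r}}u}(\Gamma_\rho)-\abs{Du}(\Gamma)\bigr)-\RCa\bitransfull{\inv{\gammax{\rho}{h}{r}}}{\iota}\abs{D\pf{\gammax{\rho}{h}{r}}u}(\closure U_r\setminus\Gamma_\rho)$. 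Using $\bitransfull{\inv{\gammax{\rho}{h}{r}}}{\iota}\le C\abs\rho$ (Lemma \ref{lemma:gamma-rho-h-r}) together with the standard estimate for the total variation under Lipschitz maps, $\abs{D\pf{\gammax{\rho}{h}{r}}u}(\closure U_r\setminus\Gamma_\rho)\le C'\abs{Du}(\closure U_r\setminus\Gamma)$ — which follows from Lemma \ref{lemma:da-gammau} and the area formula, with constants uniform in $\rho$ small since the Lipschitz factors of $\gammax{\rho}{h}{r}$ and its inverse are — dividing by $\rho$ and letting $\rho\downto 0$ gives the matching lower bound $\TDIFF_u^R(\gammax{1}{h}{r})\ge\tilde\TDIFF_u^R(\gammax{1}{h}{r})-\RCa CC'\abs{Du}(\closure U_r\setminus\Gamma)$.

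Combining the two bounds gives $\bigl|\TDIFF_u^R(\gammax{1}{h}{r})-\tilde\TDIFF_u^R(\gammax{1}{h}{r})\bigr|\le C''\abs{Du}(\closure U_r\setminus\Gamma)$ for every small $r$; dividing by $I_r=r^m I_1$ and using $\abs{Du}(\closure U_r\setminus\Gamma)=o(r^m)$, the difference $\inv I_r\TDIFF_u^R(\gammax{1}{h}{r})-\inv I_r\tilde\TDIFF_u^R(\gammax{1}{h}{r})$ tends to $0$, so the two share the same limit as $r\downto 0$, which is exactly \eqref{eq:curvp-d-pf}. The main obstacle is precisely the lower bound just described — realising that one must feed the pushforward $v$ and the inverse transformation back into the comparability condition (the device alluded to in the remark preceding the lemma). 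A secondary, bookkeeping-level concern is the order of the $\rho\downto 0$ and $r\downto 0$ limits and the existence of the various differentials; if needed I would carry $\limsup$/$\liminf$ in $\rho$ through the inequalities, noting that $\tilde\TDIFF_u^R(\gammax{1}{h}{r})$ itself exists because $\abs{D\pf{\gammax{\rho}{h}{r}}u}(\Gamma_\rho)$ has the explicit form $\int_{V_\Gamma}(\cdots)\sqrt{1+\norm{\grad f_\Gamma+\rho\grad h_r}^2}\,dv$ by Lemma \ref{lemma:triangle-mod}, differentiable in $\rho$.
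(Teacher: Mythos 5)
Your proof is correct and follows essentially the same route as the paper: the upper bound by applying separable double-Lipschitz comparability to $u$ with the pair $(\gammax{\rho}{h}{r},\iota)$ and graph $\Gamma$, and the lower bound by applying it to $v=\pf{\gammax{\rho}{h}{r}}u$ with $(\inv{\gammax{\rho}{h}{r}},\iota)$ and graph $\gammax{\rho}{h}{r}(\Gamma)$, with both error terms of size $O(\rho)\,\abs{Du}(\closure U_r\setminus\Gamma)$ removed by the density hypothesis after dividing by $I_r$. The key device you identify for the lower bound is exactly the paper's.
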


\begin{proof}
    We recall that
    \[
        \TDIFF_u^R(\gammax{1}{h}{r})
        = \lim_{\rho \downto 0} \frac{R(\pf{\gammax{\rho}{h}{r}} u)-R(u)}{\rho}.
    \]
    By Definition \ref{def:lipschitz-trans} we have
    \[
        R(\pf{\gammax{\rho}{h}{r}} u) - R(u) \le 
        \RCa \bitransfull{\gammax{\rho}{h}{r}}{\iota} 
            \abs{D u}(\closure U \setminus \Gamma)
            +
        \RCs \bigl(\abs{D \pf{\gammax{\rho}{h}{r}} u}(\gammax{\rho}{h}{r}(\Gamma)) - \abs{D u}(\Gamma)\bigr).
    \]
    Using Lemma \ref{lemma:gamma-rho-h-r} to estimate
    $\bitransfull{\gammax{\rho}{h}{r}}{\iota}$, and
    minding the assumption $\Theta_m(\abs{Du} \restrict \Omega \setminus \Gamma; x_0)=0$,
    we deduce for arbitrary $\epsilon>0$, for small enough $r>0$ that
    \begin{equation}
        \notag
        \TDIFF_u^R(\gammax{1}{h}{r})
        \le 
        \epsilon r^m
        +
        \RCs \limsup_{\rho \downto 0}
        \frac{\abs{D \pf{\gammax{\rho}{h}{r}} u}(\gammax{\rho}{h}{r}(\Gamma)) - \abs{D u}(\Gamma)}{\rho}.
    \end{equation}
    Dividing by $I_r$, letting $r \downto 0$, 
    and afterwards letting $\epsilon \downto 0$, yields the upper bound
    \begin{equation}
        \notag
        \CURVP_u^{R,\Gamma,\baseh}(x_0) \le \lim_{r \downto 0} \inv I_r \tilde \TDIFF_u^R(\gammax{1}{h}{r}).
    \end{equation}
    
    In order to derive the corresponding lower bound,
    we set $\gamma \defeq \gammax{\rho}{h}{r}$ and
    $v \defeq \pf\gamma u$. Then
    \[
        \begin{split}
        R(\pf\gamma u) - R(u) 
        =
        R(v) - R(\pf{\inv\gamma} v)
        &
        \ge
        -\RCa \bitransfull{\inv\gamma}{\iota} 
            \abs{D u}(\closure U \setminus \gamma(\Gamma))
        \\ & \phantom{\ge}
            -
        \RCs\frac{\abs{D \pf{\gamma} v}(\Gamma) - \abs{D v}(\gamma(\Gamma))}{\rho}.
        \end{split}
    \]
    Using Lemma \ref{lemma:gamma-rho-h-r} to estimate
    $\bitransfull{\inv\gamma}{\iota}$, and proceeding
    as above, we deduce \eqref{eq:curvp-d-pf}.
\end{proof}

We now proceed with the final missing proof.

\begin{proof}[Proof of Lemma \ref{lemma:div-curv}]
    We define the shorthand notation
    $\lambda(v) \defeq \lambda_u(g_\Gamma(v))$, and
    observe from Lemma \ref{lemma:triangle-mod} that
    \[
        \abs{D \pf{\gammax{\rho}{h}{r}} u}(\gammax{\rho}{h}{r}(\Gamma))
        =
        \int_{V_\Gamma} \lambda(v) \sqrt{1+\norm{\DIFFSS (f_\Gamma+\rho h_r)(v)}^2} \d v,
    \]
    and
    \[
        \abs{D u}(\Gamma)
        =
        \int_{V_\Gamma} \lambda(v) \sqrt{1+\norm{\DIFFSS f_\Gamma(v)}^2} \d v.
    \]
    We further calculate using convexity of the norm, and concavity of
    the square root that
    \begin{multline}
        \notag
        \rho \curvf_{\lambda,f}(h_r) \le
        \int_V \lambda(v) \sqrt{1+\norm{\DIFFSS (f_\Gamma+\rho h_r)(v)}^2} \d v
        - \int_V \lambda(v) \sqrt{1+\norm{\DIFFSS f_\Gamma(v)}^2} \d v
        \\
        \le
        \rho \curvf_{\lambda,f}(h_r) 
        +
        \rho^2 \int_V \lambda(v) \frac{\norm{\DIFFSS h_r(v)}^2}{\sqrt{1+\norm{\DIFFSS f_\Gamma(v)}^2}} \d v,
    \end{multline}
    where
    \[
            \curvf_{\lambda,f}(h) \defeq \int_V \lambda(v) \adaptiprod{\frac{\DIFFSS f_\Gamma(v)}{\sqrt{1+\norm{\DIFFSS f_\Gamma(v)}^2}}}{\DIFFSS h(v)} \d v.
    \]
    Thus
    \[
        \tilde \TDIFF_u^{R}(\gammax{1}{h}{r}) = \RCs \curvf_{\lambda,f}(h_r).
    \]
    Minding Lemma \ref{lemma:curvp-d-pf}, we deduce
    \[
        \CURVP_u^{R,\Gamma,\baseh}(x_0) =\lim_{r \downto 0} \RCs \curvf_{\lambda,f}(h_r/I_r).
    \]
    Since $\DIFFSS f_\Gamma$ is bounded, the claim is immediate
    if $\inv g_\Gamma(x_0)$ is a Lebesgue point of $\lambda$ 
    and $\DIFFSS f_\Gamma$, that is $\H^{m-1}$-\ae
\end{proof}

 \providecommand{\homesiteprefix}{http://iki.fi/tuomov/mathematics}
  \providecommand{\eprint}[1]{\href{http://arxiv.org/abs/#1}{arXiv:#1}}


\end{document}